\definecolor{ffqqqq}{rgb}{1.,0.,0.}
\definecolor{xfqqff}{rgb}{0.5, 0.0, 1.0}
\theoremstyle{plain}
\newtheorem{theo}{Theorem}[section]
\newtheorem{lem}{Lemma}[section]
\newtheorem{prop}{Proposition}[section]
\newtheorem{cor}{Corollary}[section]
\theoremstyle{definition} 
\newtheorem{nota}{Notation}[section]
\newtheorem{de}{Definition}[section]
\newtheorem{exa}{Example}[section]
\newtheorem{as}{Assumption}[section]
\newtheorem{alg}{Algorithm}[section]
\newcommand{\btheo}{\begin{theo}}
\newcommand{\bde}{\begin{de}}
\newcommand{\ble}{\begin{lem}}
\newcommand{\bpr}{\begin{prop}}
\newcommand{\bno}{\begin{nota}}
\newcommand{\bex}{\begin{exa}}
\newcommand{\bcor}{\begin{cor}}
\newcommand{\spro}{\begin{proof}}
\newcommand{\bas}{\begin{as}}
\newcommand{\balg}{\begin{alg}}
\newcommand{\etheo}{\end{theo}}
\newcommand{\ede}{\end{de}}
\newcommand{\ele}{\end{lem}}
\newcommand{\epr}{\end{prop}}
\newcommand{\eno}{\end{nota}}
\newcommand{\eex}{\end{exa}}
\newcommand{\ecor}{\end{cor}}
\newcommand{\fpro}{\end{proof}}
\newcommand{\eas}{\end{as}}
\newcommand{\ealg}{\end{alg}}
\theoremstyle{plain}
\newtheorem{theos}{Theorem}
\newtheorem{props}{Proposition}
\newtheorem{lems}{Lemma}
\newtheorem{cors}{Corollary}
\theoremstyle{definition}
\newtheorem{exas}{Example}
\newtheorem{algs}{Algorithm}
\newtheorem{asss}{Assumption}
\newtheorem{defns}{Definition}
\newcommand{\btheos}{\begin{theos}}
\newcommand{\etheos}{\end{theos}}
\newcommand{\bprops}{\begin{props}}
\newcommand{\eprops}{\end{props}}
\newcommand{\bdes}{\begin{defns}}
\newcommand{\edes}{\end{defns}}
\newcommand{\blems}{\begin{lems}}
\newcommand{\elems}{\end{lems}}
\newcommand{\bcors}{\begin{cors}}
\newcommand{\ecors}{\end{cors}}
\newcommand{\bexs}{\begin{exas}}
\newcommand{\eexs}{\end{exas}}
\newcommand{\balgs}{\begin{algs}}
\newcommand{\ealgs}{\end{algs}}
\newcommand{\bass}{\begin{asss}}
\newcommand{\eass}{\end{asss}}
\newcommand{\numobs}{\ensuremath{n}}
\newcommand{\usedim}{\ensuremath{d}}
\newcommand{\thetastar}{\ensuremath{\theta^*}}
\newcommand{\real}{\ensuremath{\mathbb{R}}}
\newcommand{\defn}{\ensuremath{: \, =}}
\newcommand{\inprod}[2]{\ensuremath{\langle #1 , \, #2 \rangle}}
\newcommand{\Exs}{\ensuremath{\mathbb{E}}}
\newcommand{\NORMAL}{\ensuremath{N}}
\long\def\comment#1{}
\newcommand{\ltwo}[1]{\ensuremath{\|#1\|_2}}
\newcommand{\HACKPROOF}{\begin{proof}}
\newcommand{\HACKENDPROOF}{\end{proof}}
\newcommand{\Ball}{\ensuremath{\mathbb{B}}}
\newcommand{\qprob}{\ensuremath{\mathbb{Q}}}
 \newcommand{\rad}{\ensuremath{r}}
\newcommand{\Ind}{\ensuremath{\mathbb{I}}}
\newcommand{\trace}{\ensuremath{\mbox{trace}}}
\newcommand{\widgraph}[2]{\includegraphics[keepaspectratio,width=#1]{#2}}
\newlength{\widebarargwidth}
\newlength{\widebarargheight}
\newlength{\widebarargdepth}
\definecolor{mypink}{rgb}{0.858, 0.188, 0.478}
\newcommand{\Prob}{\mathbb{P}}
\newcommand{\epsopt}{\epsilon_{\text{{\tiny OPT}}}}
\newcommand{\SPEC}{\ensuremath{G}}
\newcommand{\UNIERR}{\ensuremath{\operatorname{Err}}}
\newcommand{\Hyp}{\ensuremath{\mathcal{H}}}
\newcommand{\KposSet}{\mathcal{S}}
\newcommand{\trunc}[1]{\Pi_{#1}}
\newcommand{\testTr}{\Psi_{\text{{\tiny LPT}}}}
\newcommand{\ellip}{\mathcal{E}}
\newcommand{\tmpdim}{t}
\newcommand{\upk}{k_u}
\newcommand{\lwk}{k_\ell}
\newcommand{\kb}{k_B}
\newcommand{\epslpt}{\epsilon_{\text{{\tiny LPT}}}}
\newcommand{\at}{\theta^*}
\newcommand{\thetadag}{\theta^{\dagger}}
\newcommand{\epscritu}{\ensuremath{\epsilon_u}}
\newcommand{\newepscritu}{t^*_u}
\newcommand{\newepscritl}{t^*_{\ell}}
\newcommand{\epscritl}{\ensuremath{\epsilon_\ell}}
\newcommand{\epscritb}{\ensuremath{\epsilon_B}}
\newcommand{\LPset}[1]{\ensuremath{\mathcal{P}_{#1}}}
\newcommand{\plaindel}{\ensuremath{\delta}}
\newcommand{\ccon}{\ensuremath{c}}
\newcommand{\upkstar}{\ensuremath{k^*_u}}
\newcommand{\Ellipse}{\ellip}
\newcommand{\kwidth}{\ensuremath{\omega}}
\newcommand{\DeltaTil}{\tilde{\Delta}}
\newcommand{\enorm}[1]{\ensuremath{\|#1\|_{\Ellipse}}}
\DeclarePairedDelimiter{\braces}{\{}{\}}
\DeclarePairedDelimiter{\norm}{\|}{\|}
\DeclarePairedDelimiter{\inner}{\langle}{\rangle}
\newcommand{\Mat}{H}
\newcommand{\NewMat}{M}
\newcommand{\epsu}{\widehat{\epsilon}_u}
\newcommand{\epsl}{\widehat{\epsilon}_\ell}
\newcommand{\kestu}{\hat{k}_u}
\newcommand{\kestl}{\hat{k}_\ell}
\newcommand{\Rfun}{\ensuremath{\Phi}}
\newcommand{\InvRfun}{\Rfun^{-1}}
\newcommand{\newupk}{m_u}
\newcommand{\newlwk}{m_\ell}
\newcommand{\newkb}{\widetilde{k}_B}
\newcommand{\uhat}{\ensuremath{\widehat{u}}}
\newcommand{\ustar}{\ensuremath{u^*}}
\newcommand{\mind}{m}
\newcommand{\Mdex}{\mathcal{M}}
\newcommand{\AMat}{A}
\newcommand{\Nat}{\ensuremath{\mathbb{N}}}
\newcommand{\Xspace}{\ensuremath{\mathcal{X}}}
\newcommand{\Ker}{\ensuremath{\mathcal{K}}}
\newcommand{\numvec}{\ensuremath{N}}
\newcommand{\mindnots}{\ensuremath{\mind \backslash s}}
\begin{document}


\begin{center}

{\bf \LARGE{The local geometry of testing in ellipses: \\Tight control
    via localized Kolmogorov widths}}

\vspace*{.2in}

{\large{
\begin{tabular}{ccccc}
Yuting Wei$^\dagger$ &&  Martin J. Wainwright$^{\dagger, \star}$
\end{tabular}
}}

\vspace*{.2in} 
\today

 \begin{tabular}{c}
 Department of Statistics$^\dagger$, and \\ Department of Electrical
 Engineering and Computer Sciences$^\star$ \\ UC Berkeley, Berkeley,
 CA 94720
 \end{tabular}
 \vspace*{.2in}

\begin{abstract}
We study the local geometry of testing a mean vector within a
high-dimensional ellipse against a compound alternative. Given samples
of a Gaussian random vector, the goal is to distinguish whether the
mean is equal to a known vector within an ellipse, or equal to some
other unknown vector in the ellipse.  Such ellipse testing problems
lie at the heart of several applications, including non-parametric
goodness-of-fit testing, signal detection in cognitive radio, and
regression function testing in reproducing kernel Hilbert spaces.
While past work on such problems has focused on the
difficulty in a global sense, we study difficulty in a way that is
localized to each vector within the ellipse.  Our main result is to
give sharp upper and lower bounds on the localized minimax testing
radius in terms of an explicit formula involving the Kolmogorov width
of the ellipse intersected with a Euclidean ball.  When applied to
particular examples, our general theorems yield interesting rates that
were not known before: as a particular case, for testing in Sobolev
ellipses of smoothness $\alpha$, we demonstrate rates that vary from
$(\sigma^2)^{\frac{4 \alpha}{4 \alpha + 1}}$, corresponding to the
classical global rate, to the faster rate $(\sigma^2)^{\frac{8
    \alpha}{8 \alpha + 1}}$, achievable for vectors at favorable
locations within the ellipse.  We also show that the optimal test for
this problem is achieved by a linear projection test that is based on
an explicit lower-dimensional projection of the observation vector.
\end{abstract}
\end{center}


\vspace*{1cm}

\section{Introduction} 
\label{sec:Intr}

Testing and estimation are two fundamental classes of problems in
statistics~\cite{lehmann2006theory,lehmann2006testing}. In a classical
decision-theoretic framework, different methods are compared via their
risk functions, and under the minimax formalism, methods are compared
by their worst-case behavior over the entire parameter space.  Such
global minimax risk calculation are reflective of typical behavior
when the risk function is close to constant over the entire parameter
space.  On the other hand, when the risk function varies
substantially, then it is possible that a global minimax calculation
will be unduly conservative.  In such settings, one is motivated to
study the notion of an adaptive or localized minimax risk (e.g., see
references~\cite{donoho1994ideal,bickel1993efficient} for early work
in this vein).

Recent years have witnessed a rapidly evolving line of work on
studying notions of local minimax risk for both estimation and
testing.  In the context of shape-constrained regression, Meyer and
Woodruffe~\cite{meyer2000degrees} introduced a notion of degrees of
freedom that adapts to the geometry of the function being estimated.
Focusing on the problem of isotonic regression, Chatterjee et
al.~\cite{chatterjee2015risk} proved a range of convergence rates,
depending on the structure of the true regression function.  For the
task of estimating a convex function at a point, Cai and
Low~\cite{cai2015framework} proposed a local minimax criterion to
evaluate the performance of different estimators.  This criterion was
adapted to establish a form of instance-dependent optimality for
convex set estimation by Cai et al.~\cite{cai2015adaptive}.  In the
context of hypothesis testing, Valiant and
Valiant~\cite{valiant2014automatic} studied a class of compound
testing problems for discrete distributions, and characterized the
local minimax testing radius in the TV norm.  Balakrishnan and
Wasserman~\cite{balakrishnan2017hypothesis} studied an analogous
problem for testing density functions, and also characterized the
local testing radius in TV norm.

In this paper, we study the local geometry of testing a mean vector
inside an ellipse against a compound alternative. More precisely,
consider an ellipse of the form \mbox{$\Ellipse = \{ \theta \in
  \real^\usedim \mid \sum_{j=1}^\usedim \theta_j^2/\mu_j \leq 1 \}$,}
where $\{\mu_j\}_{j=1}^\usedim$ is a non-negative sequence that
defines the aspect ratios of the ellipse.  Although we focus on
finite-dimensional ellipses, all of our theory is non-asymptotic and
explicit, so that we can obtain results for infinite-dimensional
ellipses by taking suitable limits.  Given an observation of a
Gaussian random vector $y \in \real^\usedim$, our goal is to test
whether its mean is equal to some known vector $\at \in \Ellipse$, or
equal to some unknown vector $\theta \in \Ellipse$ that is suitably
separated from $\at$ in Euclidean norm.  As we discuss in more detail
below, such ellipse testing problems lie at the heart of a number of
applications, including non-parametric goodness-of-fit testing, signal
detection in cognitive radio, and testing regression functions in
kernel spaces.  

Study of the ellipse testing problems date back to the seminal work of
Ingster~\cite{ingster93a,ingster93b,ingster93c} and
Ermakov~\cite{ermakov1991minimax}, who focused their attention on the
special case when the null is zero ($\thetastar = 0$), and the ellipse
is induced by a Sobolev function class.  For this particular class of
testing problems, they provided a sharp characterization of the
minimax testing radius. A more general question is whether it is
possible to provide a geometric characterization of the minimax
testing radius, both as a function of the ellipse parameters
$\{\mu_j\}_{j=1}^\usedim$ as well as of the location of null vector
$\thetastar$ within the ellipse.  The main contribution of this paper
is to answer this question in the affirmative.  In particular, we show
how for any ellipse---including the Sobolev ellipses as special
cases---the localized minimax testing radius is characterized by a
formula that involves the Kolmogorov width of a particular set.  The
Kolmogorov width is a classical notion from approximation
theory~\cite{pinkus2012n}, which measures the ``size'' of a set in
terms of how well it can be approximated by a lower-dimensional linear 
subspace of
fixed dimension.  Our formula involves the Kolmogorov width of the
ellipse intersected with an Euclidean ball around the null vector
$\thetastar$, which leads to an equation involving both $\Ellipse$ and
$\thetastar$ that can be solved to determine the localized minimax
risk.  We show that the zero case ($\thetastar = 0$) is the most
difficult, in that the localized minimax radius is largest at 
this instance.  Conversely, we exhibit other more favorable locations
within ellipses for which the local minimax testing radius can be
substantially smaller.


\subsection{Some motivating examples}
\label{sec:motivatingExamples}

Before proceeding, let us consider some concrete examples so as to
motivate our study.


\begin{exas}[{\bf{Non-parametric goodness-of-fit testing}}]

Consider a set of samples $\{x_i\}_{i=1}^\numobs$ drawn from an
unknown distribution supported on a compact set $\Xspace$. Assuming
that the unknown distribution has a density, the goodness-of-fit
problem is to test whether the samples have been drawn from some fixed
density $p^*$, in which case the null hypothesis holds, or according
to some other density $p \neq p^*$, referred to as the alternative
hypothesis. In the non-parametric version of this problem, the
alternative density $p$ is allowed to vary over a broad function
class; one example might be the class of all twice continuously
differentiable densities with second derivative $p''$ belonging to the
unit ball in $L^2(\Xspace)$.  There is a very broad literature on this
topic, with the book of Ingster and
Suslina~\cite{ingster2012nonparametric} covering many different
classes of alternative densities.

One way to approach non-parametric goodness-of-fit is via orthogonal
series expansions.  In particular, if we let $\{\phi_j\}_{j=1}^\infty$
be an orthonormal basis for $L^2(\Xspace)$, then the null density
$p^*$ and alternative $p$ can be described in terms of their basis
expansion coefficients
\begin{align*}
\theta^*_j = \int_{\Xspace} \phi_j(x) p^*(x) dx \; = \;
\Exs_{p^*}[\phi_j(X)], \quad \mbox{and} \quad \theta_j =
\int_{\Xspace} \phi_j(x) p(x) dx \; = \; \Exs_{p}[\phi_j(X)],
\end{align*}
respectively, for $j = 1, 2, \ldots$. Note that the sample averages
$y_j \defn \frac{1}{\numobs} \sum_{i=1}^\numobs \phi_j(x_i)$ are
unbiased estimates of coefficients of the true underlying density, so
the testing problem can be written as
\begin{align*}
\Hyp_0: \quad y = \theta^* + w \quad \mbox{versus} \quad 
\Hyp_1: \quad y =
\theta + w, \quad \mbox{for some $\theta \neq \theta^*$}
\end{align*}
where $w = \{w_j\}_{j=1}^\infty$ is a sequence of noise variables.  A
typical smoothness constraint on the alternative---for instance, the
condition $\int_{\Xspace} (p''(x))^2 dx \leq 1$---amounts to requiring
that, in a suitably chosen sinusoidal basis, the vector $\theta$
belongs an ellipse with parameters of the form $\mu_j = c j^{-4}$.
\end{exas}


\begin{exas}[{\bf{Detection of unknown signals in noise}}]

In cognitive radio and other wireless applications
(e.g.,~\cite{atapattu2014energy,digham2007energy,shayegh2014signal}),
it is frequently of interest to test for the presence of unknown
signals that might potentially interfere with transmission.  More
concretely, given an observation period $[0,T]$, one observes a
continuous-time waveform $\{ y(t) \mid t \in [0,T] \}$, and the goal is to
determine whether it was generated by pure noise, or a by combination
of some band-limited signal $\theta(t)$ plus noise.  See
Figure~\ref{FigCognitive} for an illustration.

\begin{figure}[h]
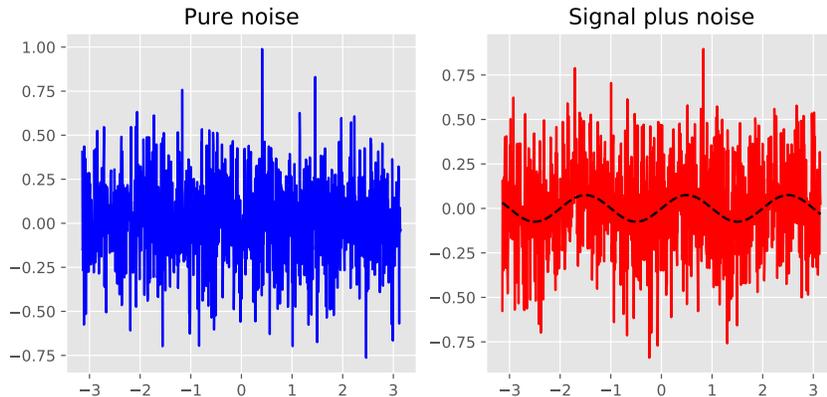

\begin{center}
  \widgraph{.8\textwidth}{fig_signal_detection}
  \caption{Illustration of the unknown signal detection problem in
    cognitive radio.  The left panel shows a continuous-time wave form
    that is pure noise, whereas the right panel shows the
    superposition of an unknown band-limited signal with a noise
    waveform.  The testing problem is to distinguish the null
    hypothesis (pure noise) from the compound alternative (some
    band-limited signal has been added).}
  \label{FigCognitive}
\end{center}
\end{figure}
After applying the Fourier transform, the waveform and signal can be
represented by sequences $y \in \ell^2(\Nat)$ and $\theta \in
\ell^2(\Nat)$, respectively.  Assuming that the Fourier frequencies
are ordered from smallest to largest, a hard band-width constraint on
the signal corresponds to the restriction that $\theta_j = 0$ for all
$j > B$, where the integer $B$ corresponds to the bandlimit cut-off.
Note that this bandwidth constraint defines a (degenerate) ellipse in
$\ell^2(\Nat)$.  More generally, a soft bandwidth constraint can be
imposed by requiring that the weighted sum $\sum_{j=1}^\infty
\frac{\theta_j^2}{\mu_j}$ for some summable sequence of positive
coefficients $\{\mu_j\}_{j=1}^\infty$.  As before, this constraint
means that the unknown signal vector $\theta$ must belong to an
ellipse $\Ellipse$.

Given this set-up, the problem of detecting the presence of an unknown
signal versus noise corresponds to testing the null that $y \sim
N(0, \sigma^2 I_d)$ versus the alternative $y \sim N(\theta,
\sigma^2 I_d)$ for some $\theta \in \Ellipse$.  More generally, one
might be interested in testing between the null hypothesis $y \sim
N(\thetastar, \sigma^2 I_\usedim)$, for some fixed signal vector
$\thetastar \in \Ellipse$, versus the same alternative.  We refer the
reader to the
papers~\cite{atapattu2014energy,digham2007energy,shayegh2014signal}
for further details on this application.

\end{exas}

\begin{exas}[{\bf{Regression function testing in kernel spaces}}]
\label{ExaRegression}
In the problem of fixed design regression, we observe pairs $(x_i,
z_i) \in \Xspace \times \real$ of the form
\begin{align}
  \label{EqnPointwise}
  z_i & = f(x_i) + \xi_i \quad \mbox{for $i = 1, \ldots, \numobs$}.
\end{align}
Here $f: \Xspace \rightarrow \real$ is an unknown function, and
$\{\xi_i\}_{i=1}^\numobs$ are a sequence of zero-mean noise variables
with variance $\sigma^2$.  Based on the observation vector $z \in
\real^\numobs$, we might be interested in testing various hypotheses
related to the unknown function $f$. For instance, is $f$ a non-zero
function?  More generally, does $f$ deviate significantly from
some fixed class of parametric functions?

These questions are meaningful only if we impose some structures on the
underlying function space, and given the
observations~\eqref{EqnPointwise} based on pointwise evaluation, one
of the most natural is based on reproducing kernel Hilbert spaces
(e.g.,~\cite{Wahba,Gu02}).  These function spaces are defined by a
symmetric positive semidefinite kernel function $\Ker: \Xspace
\times \Xspace \rightarrow \real$; as some simple examples, when
$\Xspace = [0,1]$, the kernel function $\Ker(x, x') = 1 + \min \{x,
x'\}$ defines a class of Lipschitz functions, whereas the Gaussian
kernel $\Ker(x, x') = \exp( -\frac{1}{2 t} (x - x')^2)$ with bandwidth
$t > 0$ defines a class of infinitely differentiable functions.

For any choice of kernel, the pointwise observation
model~\eqref{EqnPointwise} can be transformed to a sequence model
form.  In particular, define the kernel matrix $K \in \real^{\numobs
  \times \numobs}$ with entries $K_{ij} = \Ker(x_i, x_j)/\numobs$, and
let $\mu_1 \geq \mu_2 \geq \cdots \geq \mu_\numobs \geq 0$ represent
its ordered eigenvalues.  In terms of this notation, the pointwise
model with any function $f$ in the unit norm Hilbert ball can be
transformed to the model $y = \theta + w$, where the vector $\theta
\in \real^\numobs$ satisfies the ellipse constraint
$\sum_{j=1}^\numobs \frac{\theta_j^2}{\mu_j} \leq 1$, and each entry
of the noise vector is zero-mean with variance $\frac{\sigma^2}{n}$.
We have thus arrived at another instance of our general ellipse
problem.
\end{exas}


\subsection{Problem formulation}
\label{sec:Setup}

Having understood the range of motivations for our problem, 
let us set up the problem more precisely.  Given a sequence of
positive scalars $\mu_1\geq \ldots \geq \mu_\usedim > 0$, consider the
associated ellipse
\begin{align}
  \label{EqnEllips}
\ellip \defn \Big\{ \theta \in \real^\usedim \mid \sum_{i=1}^\usedim
\frac{\theta_i^2}{\mu_i} \leq 1 \Big\}.
\end{align}
Suppose that we make observations of the form $y = \theta + \sigma g$.
Here $\theta \in \Ellipse$ is an unknown vector, whereas $g \sim N(0,
I_{\usedim})$ is a noise vector, and the variance level $\sigma^2$ is known.  For
a given vector $\thetastar \in \Ellipse$, our goal is to test the null
hypothesis $\theta = \thetastar$ versus the alternative $\theta \in
\Ellipse \backslash \{\thetastar\}$.

Under this formulation, it is not possible to make any non-trivial
assertion about the power of any test, since the alternative allows
for vectors $\theta$ that are arbitrarily close to $\thetastar$.  In
order to make quantitative statements, we need to exclude a certain
$\epsilon$-ball around $\thetastar$ from the alternative.  Doing so
leads to the notion of the \emph{minimax testing radius} associated
this composite decision problem.  This minimax formulation was
introduced in the seminal work of Ingster and
co-authors~\cite{ingster1987minimax,ingster2012nonparametric}; since
then, it has been studied by many other researchers
(e.g.,~\cite{ermakov1991minimax,Spokoiny1998testing,lepski1999minimax,lepski2000asymptotically,baraud2002non}).

More precisely, for a given radius $\epsilon > 0$, we consider the
compound testing problem specified by the observation model $y \sim
N(\theta, \sigma^2 I_\usedim)$, and the null and alternative
\begin{align}
\label{EqnMainTesting}
\Hyp_0: \theta = \thetastar \in \ellip ~~~\text{versus}~~~ \Hyp_1:
\|\theta - \thetastar\|_2 \geq \epsilon, \theta \in \ellip.
\end{align}
We thus have a sequence of testing problems indexed by $\epsilon > 0$,
and our goal is to calibrate their difficulty in terms of the noise
level $\sigma^2$, the null vector $\thetastar$, and the local geometry
of the ellipse $\Ellipse$ around $\thetastar$.  To be clear, all the
tests that we analyze in this paper are \emph{not} given knowledge of
$\epsilon$; rather, it is a unknown quantity that is used to titrate
the difficulty of the problem. See \autoref{fig:ellips} for an
illustration of the testing problem~\eqref{EqnMainTesting}.

\begin{figure}[H]
	\centering
	\widgraph{0.5\textwidth}{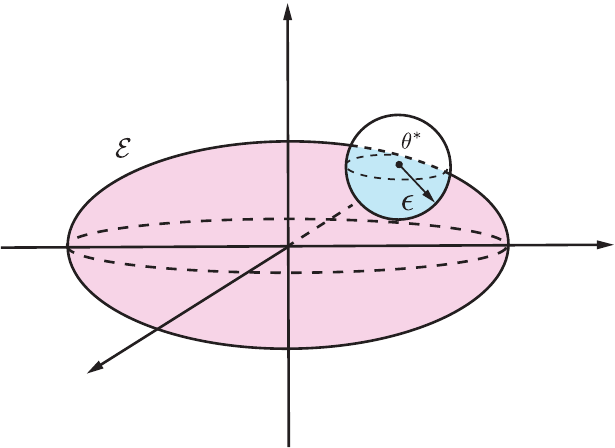}
	\caption{An illustration of testing
          problem~\eqref{EqnMainTesting}. }
	\label{fig:ellips}
\end{figure}

Letting $\psi: \real^\usedim \rightarrow \{0,1\}$ be any measurable
test function, we characterize its performance on the testing
problem~\eqref{EqnMainTesting} in terms of the sum of its (type I)
error under the null, and its (type II) error uniformly over the
alternative.  In particular, we define the \emph{uniform error}
\begin{align}
\label{EqnUniErr}
\UNIERR(\psi; \thetastar, \epsilon) \defn \Exs_{\thetastar} [\psi(y)]
+ \sup_{\theta\in \ellip, \|\theta - \thetastar\|_2 \geq \epsilon}
\Exs_\theta[1 - \psi(y)],
\end{align}
where $\Exs_\theta$ denotes expectation over $y$ under the $N(\theta,
\sigma^2 I_{\usedim})$ distribution.

For a given error level $\rho \in (0,1)$, we are interested in the
smallest setting of $\epsilon$ for which there exists some test with
uniform error at most $\rho$.  More precisely, we define
\begin{align}
\label{EqnDefnLocalMinimaxOPT}
\epsopt(\thetastar; \rho, \ellip) \defn \inf \Big \{ \epsilon \, \mid \,
\inf_{\psi} \; \UNIERR(\psi; \thetastar, \epsilon) \leq \rho \Big \},
\end{align}
a quantity which we call the \emph{$\thetastar$-local minimax testing
  radius}.  By definition, the local minimax testing radius
$\epsopt(\thetastar; \rho, \ellip)$ corresponds to the smallest separation
$\epsilon$ at which there exists \emph{some test} that distinguishes
between the hypotheses $\Hyp_0$ and $\Hyp_1$ in
equation~\eqref{EqnMainTesting} with uniform error at most $\rho$.
Thus, it provides a fundamental characterization of the statistical
difficulty of the hypothesis testing as a function of $\thetastar.$
Our results also have consequences for the more classical \emph{global
  minimax testing radius}, given by
\begin{align*}
\epsopt(\rho) \defn \sup_{\thetastar \in \Ellipse} \epsopt(\thetastar;
\rho, \ellip).
\end{align*}
As we discuss in the sequel, this global quantity has been
characterized for various Sobolev ellipses in past
work~\cite{ingster2012nonparametric}.

\subsection{Overview of our results}

Having set up the problem, let us now provide a high-level overview of
the main results of this paper.  We will show that the testing radius is
fully described by a purely geometric quantity, namely the
\emph{Kolmogorov $k$-width} of a set that localized at
$\thetastar$. Given a set $A$, the Kolmogorov width characterizes how
well the set is approximated by a $k$-dimensional linear subspace.  It
is known that the Kolmogorov width provides risk upper bounds for the
truncated series estimator
(e.g.~\cite{javanmard2012minimax,donoho1990minimax}) and it also turns
out to be a key quantity in problems such as density estimation, as
well as compressed sensing (see the
papers~\cite{hasminskii1990density,donoho2006compressed}).  We discuss
the definition of the Kolmogorov width and its property at length in
Section~\ref{sec:width}.

We will also show that the optimal test, as will be discussed in sequel, is given by a linear projection test that is based on
the projection of observation vector to a particular lower dimensional
subspace.  More discussions on its formulation and background can be
found in our Section~\ref{sec:LPT}.  We state our main results and
some of their important consequences in Section~\ref{sec:OPT}. 
Section~\ref{sec:main_proof} is devoted to the proofs of our main
theorems and corollaries.  We defer the auxiliary lemmas for our main
proofs to the appendices.


\section{Background}
\label{SecBackground}

Before proceeding to the statements of our main results, we introduce some
background on linear projection tests, as well as the notion of Kolmogorov
width.


\subsection{Linear projection tests}
\label{sec:LPT}

In this paper, we prove some upper bounds on $\epsopt(\thetastar;
\rho,\ellip)$ via a concretely defined class of tests, in particular
those based on linear projections.  Given an integer $k \in [\usedim]
\defn \{1, \ldots, \usedim \}$, let $\trunc{k}$ denote an orthogonal
projection operator, mapping any vector $v \in \real^\usedim$ to a
particular $k$-dimensional subspace.  Any projection $\trunc{k}$
defines a family of linear projection tests, indexed by the threshold
$\beta \geq 0$, of the following form
\begin{align}
\label{EqnLPT}
\psi_{\beta, \trunc{k}}(y) \defn \Ind \big[ \ltwo{\trunc{k} (y -
    \thetastar)} \geq \beta \big] \; = \; \begin{cases} 1 & \mbox{if
    $\|\trunc{k}(y - \thetastar)\|_2 \geq \beta$} \\ 0 &
  \mbox{otherwise.}
\end{cases}
\end{align}
We use $\testTr$ to denote the collection of all linear projection
tests
\begin{align*}
\testTr & \defn \Big \{ \psi_{\beta, \trunc{k}} \; \mid k \in
        [\usedim], \; \trunc{k} \in \mathcal{P}_k, \; \beta \in
        \real_+ \Big \},
\end{align*}
obtained by varying the dimension $k \in [\usedim]$, the projection
$\trunc{k}$ over the space $\mathcal{P}_k$ of all $k$-dimensional
orthogonal projections, and the threshold $\beta \geq 0$.

Given the family $\testTr$, we then define
\begin{align}
\label{EqnDefnLocalMinimaxLPT}
\epslpt(\thetastar ; \rho, \ellip) \defn \inf \Big \{ \epsilon \, \mid \,
\inf_{\psi \in \testTr} \; \UNIERR(\psi; \thetastar, \epsilon) \leq
\rho \Big \}.
\end{align}
a quantity which we call 
\emph{$\thetastar$-local LPT testing radius}.  When the error level 
$\rho$ is clear from the context, we adopt the shorthand notation
$\epslpt(\at ; \ellip)$ and $\epsopt(\at ; \ellip)$, with the implicit
understanding that all of our analysis depends on the choice of
$\rho$.


\subsection{Kolmogorov width}
\label{sec:width}

Our characterization of testing difficulty involves a classical
geometric notion known as the Kolmogorov width, which we introduce
here; we refer the reader to the book by Pinkus~\cite{pinkus2012n} for
more details.

For a given compact set $C \subset \real^\usedim$ and integer $k \in
[\usedim]$, the Kolmogorov $k$-width is a measure of how well the set
can be approximated by a $k$-dimensional subspace.  More precisely,
recalling that $\LPset{k}$ denotes the set of all $k$-dimensional
orthogonal linear projections, the \emph{Kolmogorov $k$-width},
denoted by $\kwidth_k$, is given by
\begin{align}
  \label{EqnKolmogorov}
\kwidth_k (C) = \min_{\trunc{k} \in \mathcal{P}_k} \max_{\theta \in C}
\|\theta - \trunc{k} \theta\|_2.
\end{align}
Any $\trunc{k}$ achieving the minimum in the saddle point
problem~\eqref{EqnKolmogorov} is said to be an \emph{optimal
  projection} for $\kwidth_k (C)$.  By definition, the Kolmogorov
widths are non-increasing as a function of the integer $k$---in
particular, we have
\begin{align*}
\rad(C) = \kwidth_0(C) \geq \kwidth_1(C) \geq \ldots \geq \kwidth_d(C) = 0,
\end{align*}
where $\rad(C) \defn \max_{\theta \in C} \|\theta\|_2$ is the diameter
of the compact set $C$.


\section{Main results and their consequences}
\label{sec:OPT}

We now turn to the statement of our main results, along with a
discussion of some of their consequences.


\subsection{Upper bound on the local minimax testing radius} 
\label{sec:ub_kl}

In this section, we establish an upper bound on the minimax testing
radius for problem~\eqref{EqnMainTesting} localized to a given vector
$\thetastar$.  In order to do so, we study the behavior of the linear
projection tests previously described in Section~\ref{sec:LPT}, It
turns out that LPT testing radius $\epslpt(\thetastar; \ellip)$ can be
characterized by the Kolmogorov $k$-width---as defined in
expression~\eqref{EqnKolmogorov}---of a particular set.

Before stating the main result, let us first introduce some notation.
For each $\epsilon > 0$, we define the \emph{upper critical dimension}
\begin{align}
  \label{EqnK-upper}
  \upk(\epsilon, \thetastar, \ellip) \defn \arg \min_{1 \leq k \leq
    \usedim} \Big \{ \kwidth_k(\ellip_{\thetastar} \cap
  \Ball(\epsilon)) \leq \frac{\epsilon}{\sqrt{2}} \Big \}.
\end{align}
Here set $\ellip_{\thetastar} \defn \{\theta - \at \mid \theta\in \ellip\}$
for the ellipse defined in expression \eqref{EqnEllips} and 
$\Ball(\epsilon) \defn \{v \in \real^\usedim \mid \ltwo{v}\leq \epsilon\}.$
For any fixed $\epsilon > 0$, the Kolmogorov width
$\kwidth_k(\ellip_{\thetastar} \cap \Ball(\epsilon))$ is non-increasing
with index $k$ and it goes to zero as $k$ approaches $\usedim$ (where
$\usedim$ can grow as well, then we have an infinite dimensional
ellipse) and $\kwidth_\usedim(\ellip_{\thetastar} \cap \Ball(\epsilon)) = 0$.  
Therefore, the critical dimension is always well defined.

This dimension is used to define the optimal linear projection test at
$\thetastar$.  We also use it to define for each error level $\rho \in
(0,1/2]$, the \emph{upper critical radius} 
\begin{align}
\label{EqnRadCritU}
  \epscritu(\thetastar ; \rho, \ellip) & \defn \inf \{ \epsilon \mid
  \epsilon \geq \frac{8}{\sqrt{\rho}} \sigma^2
  \frac{\sqrt{\upk(\epsilon, \thetastar, \ellip)}}{\epsilon} \Big \}.
\end{align}
For each choice of error level $\rho$, this radius defines the
separation between null and alternative that can be distinguished with
error controlled at level $\rho$.  From the fact that the function
$\epsilon \mapsto \upk(\epsilon, \thetastar, \ellip)$ is
non-increasing, it follows that $\epscritu(\thetastar ; \rho, \ellip)$
always exists, and is guaranteed to be unique and strictly positive.
More details can be found in Section~\ref{AppSolution}.

\noindent With these two ingredients, we are ready to state our upper
bound for the LPT testing radius localized to $\thetastar$:
\begin{theos}
\label{ThmProjUB}
For any vector $\thetastar \in \ellip$ and any error level $\rho \in
(0,1/2]$, we have
\begin{align}
\label{EqnProjUB-KM}
\inf_{\psi \in \testTr} \; \UNIERR(\psi; \thetastar, \plaindel) \leq \rho
\qquad \text{ for all } \plaindel \geq \epscritu(\thetastar ; \rho, \ellip).
\end{align}
\end{theos}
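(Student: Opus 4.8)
The plan is to analyze a single, carefully chosen member of the family $\testTr$: the linear projection test $\psi_{\beta, \trunc{k}}$ whose projection $\trunc{k}$ is an optimal $k$-dimensional projection for the Kolmogorov width $\kwidth_k(\ellip_{\thetastar} \cap \Ball(\plaindel))$, whose dimension is $k = \upk(\plaindel, \thetastar, \ellip)$, and whose threshold $\beta$ is calibrated to the noise level. Fix $\plaindel \geq \epscritu(\thetastar; \rho, \ellip)$; by the definition of the upper critical radius and monotonicity of $\upk(\cdot)$, this guarantees $\plaindel^2 \geq \frac{8}{\sqrt{\rho}} \sigma^2 \sqrt{\upk(\plaindel, \thetastar, \ellip)}$, i.e.\ $\sigma^2 \sqrt{k} \leq \frac{\sqrt{\rho}}{8}\, \plaindel^2$, which is the quantitative inequality driving the whole argument. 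We want to choose $\beta$ so that both the type I and type II contributions to $\UNIERR(\psi_{\beta,\trunc{k}}; \thetastar, \plaindel)$ are each at most $\rho/2$.

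For the \textbf{type I error}, write $y - \thetastar = \sigma g$ under $\Hyp_0$, so $\trunc{k}(y - \thetastar) = \sigma\, \trunc{k} g$, and $\|\trunc{k} g\|_2^2$ is a $\chi^2$ variable with $k$ degrees of freedom. Thus $\Exs_{\thetastar}[\|\trunc{k}(y-\thetastar)\|_2^2] = \sigma^2 k$, and by Markov's inequality $\Exs_{\thetastar}[\psi_{\beta,\trunc{k}}(y)] = \mprob_{\thetastar}[\|\trunc{k}(y-\thetastar)\|_2 \geq \beta] \leq \sigma^2 k / \beta^2$; choosing $\beta^2 = \frac{2}{\rho}\sigma^2 k$ makes this at most $\rho/2$. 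For the \textbf{type II error}, fix any $\theta \in \ellip$ with $\|\theta - \thetastar\|_2 \geq \plaindel$; since $\theta - \thetastar \in \ellip_{\thetastar}$ and (after possibly rescaling down to the sphere of radius $\plaindel$, which only decreases the Euclidean norm while staying in the convex set $\ellip_\thetastar \cap \Ball(\plaindel)$ — this reduction step needs a short justification) we may treat $\theta - \thetastar$ as lying in $\ellip_{\thetastar} \cap \Ball(\plaindel)$, the definition of the optimal projection gives $\|(I - \trunc{k})(\theta - \thetastar)\|_2 \leq \kwidth_k(\ellip_{\thetastar} \cap \Ball(\plaindel)) \leq \plaindel/\sqrt 2$ by the very definition \eqref{EqnK-upper} of $\upk$. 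Hence $\|\trunc{k}(\theta - \thetastar)\|_2^2 = \|\theta-\thetastar\|_2^2 - \|(I-\trunc{k})(\theta-\thetastar)\|_2^2 \geq \plaindel^2 - \plaindel^2/2 = \plaindel^2/2$, so the mean of $\trunc{k} y$ is separated from $\trunc{k}\thetastar$ by at least $\plaindel/\sqrt 2$ in the $k$-dimensional subspace. Then $\Exs_\theta[1 - \psi_{\beta,\trunc{k}}(y)] = \mprob_\theta[\|\trunc{k}(y - \thetastar)\|_2 < \beta]$; writing $\trunc{k}(y-\thetastar) = \trunc{k}(\theta - \thetastar) + \sigma \trunc{k} g$, a triangle inequality and Markov's inequality on $\sigma\|\trunc{k} g\|_2$ bound this by $\mprob[\sigma\|\trunc{k}g\|_2 > \|\trunc{k}(\theta-\thetastar)\|_2 - \beta] \leq \sigma^2 k / (\plaindel/\sqrt2 - \beta)^2$, provided $\beta < \plaindel/\sqrt 2$. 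It remains to check that with $\beta = \sqrt{2\sigma^2 k/\rho}$ the gap $\plaindel/\sqrt2 - \beta$ is a constant fraction of $\plaindel$ and that the resulting bound is $\leq \rho/2$: this is exactly where the inequality $\sigma^2\sqrt k \leq \frac{\sqrt\rho}{8}\plaindel^2$ is used (it forces $\beta \leq \frac{\sqrt\rho}{8}\cdot\frac{\plaindel^2}{\sqrt k}\cdot\frac{\sqrt k}{\sqrt{\sigma^2 k}}$-type control, pushing $\beta$ well below $\plaindel/\sqrt 2$ and making $\sigma^2 k/(\plaindel/\sqrt2 - \beta)^2 \leq \rho/2$). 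Adding the two halves gives $\UNIERR(\psi_{\beta,\trunc{k}}; \thetastar, \plaindel) \leq \rho$, and since $\psi_{\beta,\trunc{k}} \in \testTr$, this proves \eqref{EqnProjUB-KM}.

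The \textbf{main obstacle} I anticipate is the bookkeeping of constants in the type II step: one must verify that the single choice $\beta^2 = 2\sigma^2 k/\rho$ simultaneously controls type I (easy) and type II, and that the constant $8$ in the definition of $\epscritu$ is exactly what is needed to absorb the $\sqrt 2$'s, the triangle-inequality slack, and the two Markov bounds into a clean $\rho$. A secondary subtlety is the reduction "$\theta - \thetastar \in \ellip_\thetastar \cap \Ball(\plaindel)$'': for $\theta$ with $\|\theta-\thetastar\|_2$ strictly larger than $\plaindel$ one argues that $\plaindel \frac{\theta-\thetastar}{\|\theta-\thetastar\|_2}$ lies in $\ellip_\thetastar \cap \Ball(\plaindel)$ by convexity of $\ellip_\thetastar$ (as $0 \in \ellip_\thetastar$) and that $\|(I-\trunc k)(\theta-\thetastar)\|_2 / \|\theta-\thetastar\|_2$ equals the corresponding ratio for the rescaled point, so the width bound transfers. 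Everything else is a routine $\chi^2$ tail / Markov computation, and no concentration inequality sharper than Markov is required because the factor $1/\sqrt\rho$ in $\epscritu$ already gives the slack.
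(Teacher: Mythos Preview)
Your overall architecture matches the paper exactly: take $k=\upk(\plaindel,\thetastar,\ellip)$, use the optimal Kolmogorov projection $\trunc{k}$, exploit $\kwidth_k\le\plaindel/\sqrt2$ to obtain $\|\trunc{k}(\theta-\thetastar)\|_2^2\ge\plaindel^2/2$, and handle $\|\theta-\thetastar\|_2>\plaindel$ by the convexity rescaling you describe (this is precisely the paper's inequality~\eqref{EqnAdagio}). The gap is in your tail-bound methodology, and it is fatal for the constants in the theorem as stated.

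Using Markov on $\Exs\|\trunc{k}(y-\thetastar)\|_2^2=\sigma^2 k$ forces $\beta^2=2\sigma^2 k/\rho$. For your type~II step you then need $(\plaindel/\sqrt2-\beta)^2\ge 2\sigma^2 k/\rho=\beta^2$, hence $\plaindel/\sqrt2\ge 2\beta$, i.e.\ $\plaindel^2\ge 16\sigma^2 k/\rho$. But the hypothesis $\plaindel\ge\epscritu$ only yields $\plaindel^2\ge \tfrac{8}{\sqrt\rho}\sigma^2\sqrt{k}$, and $\tfrac{8}{\sqrt\rho}\sqrt{k}\ge 16k/\rho$ would require $k\le\rho/4<1$, which never holds. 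Concretely, at $\plaindel=\epscritu$ one has $\beta^2/(\plaindel^2/2)=\sqrt{k}/(2\sqrt\rho)$, so for $k\ge2$ the threshold already exceeds the signal $\plaindel/\sqrt2$ and the triangle-inequality bound is vacuous; even for $k=1$ the residual Markov bound is far above $\rho/2$. Your closing remark that ``no concentration inequality sharper than Markov is required'' is therefore exactly backwards.

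The paper's fix is to work with the \emph{squared} statistic $T_k=\|\trunc{k}(y-\thetastar)\|_2^2$ and use Chebyshev (variance, not mean): under $\Hyp_0$, $T_k/\sigma^2\sim\chi^2_k$ has variance $2k$, giving $\mprob[T_k\ge\sigma^2(k+\sqrt{4k/\rho})]\le\rho/2$. This threshold is only $O(\sigma^2\sqrt{k/\rho})$ above the null mean, and under the alternative the noncentral mean exceeds $\sigma^2 k+\plaindel^2/2$; since $\plaindel^2/2\ge 4\sigma^2\sqrt{k/\rho}$ by the definition of $\epscritu$, a second Chebyshev bound on the noncentral $\chi^2$ controls type~II. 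The constant $8/\sqrt\rho$ in~\eqref{EqnRadCritU} is calibrated precisely to this $\sqrt{k}$-scale fluctuation, not to the $k$-scale threshold your Markov argument produces.
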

\noindent We prove this theorem in Section~\ref{AppThmUB}.   


Recall the definition of the $\thetastar$-localized minimax radius
$\epsopt$ and $\thetastar$-localized LPT radius $\epslpt$ from
equations~\eqref{EqnDefnLocalMinimaxOPT}
and~\eqref{EqnDefnLocalMinimaxLPT}, respectively. In terms of this
notation, Theorem~\ref{ThmProjUB} guarantees that
\begin{align*}
  \epsopt(\thetastar; \rho, \Ellipse) \; \stackrel{(i)}{\leq} \;
  \epslpt(\thetastar; \rho, \Ellipse) \; \stackrel{(ii)}{\leq} \;
  \epscritu(\thetastar; \rho, \ellip).
\end{align*}
To be clear, the first inequality (i) is a trivial consequence of the
definitions, whereas inequality (ii) is the substance of
Theorem~\ref{ThmProjUB}.   


\paragraph{Structure of optimal linear test:}  Although we have stated
the theorem in an existential way, our analysis also gives an explicit
form of the test that achieves the error
guarantee~\eqref{EqnProjUB-KM}.  The construction of this test
consists of three steps:
\begin{itemize}
\item For a given error level $\rho$, first compute the critical
  radius $\epscritu(\thetastar ; \rho, \Ellipse)$ using
  expression~\eqref{EqnRadCritU}.
\item Second, using equation~\eqref{EqnK-upper}, compute the induced
  critical upper dimension
  \begin{align*}
 \upkstar \defn \upk(\epscritu(\thetastar ; \rho, \ellip), \thetastar,
 \ellip),
  \end{align*}
  as well as the associated projection matrix $\Pi_{\upkstar}$ that
  defines the Kolmogorov width indexed by $\upkstar$.
\item  Third, apply the linear projection test
\begin{align}
\label{EqnOptTest}
y \mapsto \Ind \Big[ \ltwo{\Pi_{\upkstar}(y - \thetastar)}^2 \geq
  \sigma^2(\upkstar + \sqrt{4\upkstar/\rho}) \Big].
\end{align}
\end{itemize}
The proof of the theorem is based on analyzing this linear
test~\eqref{EqnOptTest} directly and explicitly controlling its type I
and type II error.

\noindent In order to illustrate the use of Theorem~\ref{ThmProjUB},
let us consider a very simple example.

\begin{exas}[{\bf Circular constraints}]
  Suppose that the ellipse $\Ellipse$ is simply a circle in
  $\real^\usedim$, say with $\mu_j = \usedim$ for all $j = 1,
  \ldots, \usedim$. Let us consider the case of testing at zero (so
  that $\thetastar = 0$) with a noise level $\sigma \in (0,1)$.  For
  any $\epsilon \in (0, \sqrt{\usedim})$, we claim that the upper
  critical dimension~\eqref{EqnK-upper} is equal to $\usedim$.
  Indeed, for any projection matrix of dimension $k < \usedim$, there
  is at least one dimension that is missing, and this dimension
  contains a vector of Euclidean length $\sqrt{\usedim}$ inside the
  sphere.  Consequently, solving for the upper critical radius from
  equation~\eqref{EqnRadCritU} yields that $\epscritu^2(0; \ellip) =
  \frac{8}{\sqrt{\rho}} \sigma^2 \sqrt{\usedim}$.
  \end{exas}

Note that this rate matches the minimax testing radius for a subspace
of dimension $\usedim$.  This calculation makes sense, because for
this particular example---that is, testing at the zero vector for a
circle with the chosen scalings of $\mu_j$ and $\sigma$---the geometry
and boundary of the circle actually plays no role.

In Section~\ref{SecConsequences}, we consider some more substantive
applications of Theorem~\ref{ThmProjUB}, and in particular,
demonstrate that the local minimax testing radius varies substantially
as a function of the location of $\at$ within the ellipse.


\subsection{Lower bound on the local minimax testing radius} 
\label{sec:lw_kl}

Thus far, we have derived an upper bound for the localized testing
radius for a particular procedure---namely, the LPT.  Of course, it is
of interest to understand when the LPT is actually an optimal test,
meaning that there is no other test that can discriminate between the
null and alternative for smaller separations.  In this section, we use
information-theoretic methods to prove a lower bound on the localized
minimax testing radius $\epsopt(\thetastar; \ellip)$.  Whenever the
radius $\epslpt(\thetastar; \ellip)$ achievable by a linear projection
test matches the lower bound for every $\thetastar$, we can claim the
linear projection tests are \emph{locally minimax optimal}.


For the testing problem \eqref{EqnMainTesting}, it turns out that the
local minimax testing radius $\epsopt(\thetastar; \ellip)$ can also be
lower bounded via the Kolmogorov $k$-width, as previously defined in
equation~\eqref{EqnKolmogorov}, of a particular set.  In order to
state this lower bound, we fix a triplet of positive constants $a,b,c$
linked by the relations
\begin{align}
\label{EqnFrog}
a > 3b, \mbox{ and } \ccon = \ccon(a,b) \defn \frac{b}{8\sqrt{2}}-
\frac{\sqrt{a^2-9b^2}}{12\sqrt{2}} > 0.
\end{align}
Recall the recentered ellipse \mbox{$\ellip_{\thetastar} \defn
  \{\theta - \at \mid \theta\in \ellip\}$}, and Euclidean ball of radius $\epsilon$ centered at zero $\Ball(\epsilon)
\defn \{v \in \real^\usedim \mid \ltwo{v}\leq \epsilon\}$.

For our upper bound, the upper critical dimension $\upk$ from
equation~\eqref{EqnK-upper} played a central role.  As the lower
analogue to this quantity, let us define the \emph{lower critical
  dimension}
\begin{align}
\label{EqnK-lower}
\lwk(\epsilon, \thetastar, \ellip) \defn \arg \min_{1\leq k \leq
  \usedim} \braces*{ \kwidth_k(\ellip_{\thetastar} \cap
  \Ball(a\epsilon)) \leq 3b \epsilon},
\end{align}
where the fixed choice of constants $(a,b)$ should be understood
implicitly.  We then define the \emph{lower critical radius}
\begin{align}
\label{EqnRadCritL}
  \epscritl(\thetastar; \ellip) & \defn \sup \Big \{ \epsilon \mid
  \epsilon \leq \frac{1}{4}
  \sigma^2 \frac{\sqrt{\lwk(\epsilon, \thetastar, \ellip)}}{\epsilon} \Big \},
\end{align}
From the fact that the function $\epsilon \mapsto \lwk(\epsilon,
\thetastar, \ellip)$ is non-increasing, it follows that $\epscritl$ is
well-defined and unique.  More details can be found in
Appendix~\ref{AppSolution}.

We prove a lower bound as a function of the quantity
$\epscritl(\thetastar; \ellip)$ and also a second quantity, one which
measures its proximity to the boundary of the ellipse.  More
precisely, for a given vector $\thetastar$ and constant $a > 0$, we
define the mapping $\Rfun: \real_+ \rightarrow \real_+$ via
\begin{align}
\label{EqnRfun}
  \Rfun(\delta) & = \begin{cases} 1 & \mbox{if $\delta > \|\thetastar\|_2/a$} \\
    1 \wedge \min \Big \{ r \geq 0 \, \mid a^2 \delta^2 \leq
    \sum_{i=1}^\usedim \frac{r^2}{(r+\mu_i)^2} (\at_i)^2 \Big \} & \mbox{otherwise.}
  \end{cases}
\end{align}
As shown in Appendix~\ref{AppRfun}, this mapping is well-defined, and
has the limiting behavior $\Rfun(\delta) \rightarrow 0$ as $\delta
\rightarrow 0^+$.  Let us denote $\Rfun^{-1}(x)$ as the largest
positive number of $\delta$ such that $\Rfun(\delta) \leq x$. Note
that by this definition, we have $\InvRfun(1) = \infty$.

\noindent We are now ready to state our lower bound for the optimal
testing radius localized to $\at$:
\begin{theos}
\label{ThmLowerBound}
Consider constants $a,b, c\in (0,1)$ satisfying the
conditions~\eqref{EqnFrog}.  Then for any $\thetastar \in \Ellipse$,
we have
\begin{align}
\label{EqnProjLB-KM}
\inf_{\psi} \; \UNIERR(\psi; \thetastar, \plaindel) \geq \frac{1}{2}
\qquad \text{ whenever } \quad 
\plaindel \, \leq \, \ccon \min \Big\{ \epscritl(\thetastar; \ellip), 
~~ \Rfun^{-1} \big( (\enorm{\thetastar}^{-1} - 1)^2 \big) \Big \}.
\end{align}
\end{theos}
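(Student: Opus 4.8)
The plan is to use the standard information-theoretic (Le Cam / Ingster--Suslina) recipe for lower bounds on testing radii: construct a prior distribution supported on the alternative set $\{\theta \in \ellip : \|\theta - \thetastar\|_2 \geq \epsilon\}$, and show that the resulting mixture of Gaussians $N(\theta, \sigma^2 I_\usedim)$ is close in $\chi^2$-divergence (or total variation) to the null $N(\thetastar, \sigma^2 I_\usedim)$, so that no test can have uniform error below $1/2$. First I would reduce to the case $\thetastar = 0$ of a recentered problem by working in the shifted coordinates $\theta \mapsto \theta - \thetastar$, so the alternative lives in $\ellip_{\thetastar}$; the subtlety is that $\ellip_{\thetastar}$ is no longer an ellipse centered at the origin, which is exactly why the two separate quantities $\epscritl$ and $\Rfun^{-1}$ appear. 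The quantity $\Rfun^{-1}\big((\enorm{\thetastar}^{-1}-1)^2\big)$ governs ``how much room'' there is inside the ellipse near $\thetastar$ before the ellipse constraint bites; when $\thetastar$ is interior (small $\enorm{\thetastar}$) this is large, and when $\thetastar$ is near the boundary it becomes the binding constraint.

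The heart of the construction: fix $\epsilon \leq \ccon\,\epscritl(\thetastar;\ellip)$ (and below the $\Rfun^{-1}$ threshold). Let $k = \lwk(\epsilon, \thetastar, \ellip)$ be the lower critical dimension, so by definition $\kwidth_k(\ellip_{\thetastar}\cap\Ball(a\epsilon)) \leq 3b\epsilon$ fails to hold for $k-1$, i.e. \emph{every} $(k-1)$-dimensional projection leaves a residual of size $> 3b\epsilon$ somewhere in $\ellip_{\thetastar}\cap\Ball(a\epsilon)$. I would use this to extract a large batch of nearly-orthogonal directions inside $\ellip_{\thetastar}\cap\Ball(a\epsilon)$ — roughly $k$ of them, each of Euclidean norm on the order of $b\epsilon$ — via a greedy/volumetric argument on the Kolmogorov width (this is the kind of combinatorial width-to-packing extraction that Section~\ref{sec:width} sets up). Then take the prior to be a uniform random sign combination (or a Gaussian) over these directions, scaled so the perturbation $\nu$ lands in $\ellip_{\thetastar}$ — here is where $\Rfun$ enters, since $\thetastar + \nu \in \ellip$ requires $\nu$ to respect the local shape of the ellipse, and the condition $\epsilon \leq \ccon\,\Rfun^{-1}(\cdot)$ is precisely what guarantees feasibility — while ensuring $\|\nu\|_2 \geq \epsilon$ with high probability so the prior is supported on the alternative. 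The constants in \eqref{EqnFrog} (the relations $a > 3b$ and the formula for $\ccon$) are calibrated so that the ``in the ellipse'' and ``separated by $\epsilon$'' requirements are simultaneously satisfiable.

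The $\chi^2$ computation is then the routine part: for a sign-prior over orthogonal bumps of squared norm $\asymp k (b\epsilon)^2 / \text{(something)}$, the $\chi^2$-divergence between the mixture and $N(\thetastar,\sigma^2 I)$ is controlled by a quantity like $\exp(c' k (b\epsilon)^4 / \sigma^4) - 1$, and the defining inequality $\epsilon \leq \tfrac14 \sigma^2 \sqrt{k}/\epsilon$ from \eqref{EqnRadCritL}, i.e. $\epsilon^4 \leq \tfrac{1}{16}\sigma^4 k$, makes this divergence a small constant, hence total variation bounded away from $1$ and uniform error $\geq 1/2$. The main obstacle I anticipate is the width-to-packing step: turning the failure of the Kolmogorov width bound at level $k-1$ into a genuinely $k$-dimensional family of well-separated, norm-controlled perturbations that \emph{also} fit inside the (uncentered, possibly degenerate) set $\ellip_{\thetastar} \cap \Ball(a\epsilon)$, and then checking that the sign-mixture stays in the alternative — this is where all the bookkeeping with the constants $a, b, c$ and the auxiliary map $\Rfun$ must be done carefully, and it is the step I would expect to defer to the appendix lemmas referenced after the statement.
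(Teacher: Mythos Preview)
Your proposal is on the right track and matches the paper's overall strategy (Le Cam mixture, directions extracted from the failure of the Kolmogorov width at level $k-1$, then a $\chi^2$ bound), but the paper's actual proof has two structural features you do not anticipate and which are not mere bookkeeping. First, it splits on whether $\enorm{\thetastar} \leq 1/2$: when $\thetastar$ is deep inside the ellipse the argument is much simpler, since one can directly inscribe a $(k{+}1)$-dimensional ball in $\ellip_{\thetastar}$ and invoke a Bernstein-width lower bound (Lemma~\ref{LemBernstein} together with Lemma~\ref{LemChain}), bypassing the $\Rfun$ machinery altogether. Second, in the hard case $\enorm{\thetastar} > 1/2$, the construction is more delicate than a uniform sign prior over the extracted directions: the paper first moves to an auxiliary point $\thetadag$ between $\thetastar$ and the origin, chosen so that $\NewMat\thetadag \parallel (\thetastar - \thetadag)$ (this is exactly what $\Rfun$ parametrizes), builds the orthogonal system $\{u_i\}$ around $\thetadag$ rather than $\thetastar$, and then for each subset $S$ of coordinates selects one \emph{specific} sign vector $z^S$ via an averaging argument (Lemma~\ref{LemHanaSleep}), because arbitrary sign combinations need not remain in $\ellip$. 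The prior is uniform over subsets $S$, not over sign patterns, and the $\chi^2$ computation (Lemma~\ref{LemPackingLB}) has to track the singular values of an upper-triangular matrix $H$ encoding the non-orthogonality of the greedily extracted directions---this is where the eigenvalue bounds in Lemma~\ref{LemPacking}(e) are used, and is the piece your sketch underestimates.
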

\noindent See Section~\ref{AppThmKLB} for the proof of this theorem. 

\paragraph{Remarks:}  Regarding concrete choices of the constants $(a,b,c)$
that underlie this theorem, if we set $a = \frac{\sqrt{43}}{12}$ and
$b=\frac{1}{4}$, then $\ccon = \frac{1}{288\sqrt{2}}$. Since our main interest
is to understand the scaling of the testing radius with respect to
$\sigma$ and the geometric parameters of the problem, we have made no
efforts to obtain the sharpest constants in the theorem statement.

Second, to clarify the role of the term $\Rfun^{-1} \big(
(\enorm{\thetastar}^{-1} - 1)^2 \big)$, it is not of primary interest
and serves to measure whether or not $\thetastar$ is sufficiently far
inside the ellipse.  Concretely, if we assume that $\enorm{\at} \leq
1/2$, then $(\enorm{\thetastar}^{-1} - 1)^2 \geq 1$ therefore
$\InvRfun \big ( (\enorm{\thetastar}^{-1} - 1)^2 \big) = \infty$ which
means that
\begin{align*}
\min \Big\{\epscritl(\thetastar; \ellip), ~~ \Rfun^{-1} \big(
(\enorm{\thetastar}^{-1} - 1)^2 \big) \Big \} = 
\epscritl(\thetastar; \ellip).
\end{align*}

A final remark on the proof of Theorem~\ref{ThmLowerBound}: it is
based on constructing a prior distribution on the alternative $\Hyp_1$
that is independent of observation $y$.  This distribution is
supported on precisely those points in $\Hyp_1$ that are relatively
hard to distinguish from $\Hyp_0$.  Then the testing error can be
lower bounded through controlling the total variation (TV) distance between
two marginal likelihood functions.


\subsection{Some consequences of our results}
\label{SecConsequences}

One useful consequence of Theorems~\ref{ThmProjUB}
and~\ref{ThmLowerBound} is in providing a sufficient condition for the
optimality of the LPT.  In particular, suppose that the upper and
lower dimensions defined in equations~\eqref{EqnK-upper}
and~\eqref{EqnK-lower} differ only by constant pre-factors, which we
write as
\begin{align}
\label{EqnOptCond}
\upk(\epsilon, \thetastar, \ellip) \asymp \lwk(\epsilon; \thetastar,
\ellip).
\end{align}
Under this condition, it follows that the critical upper and lower
radii match---namely, that $\epscritu(\thetastar; \ellip) \asymp
\epscritl(\thetastar; \ellip)$.  Theorem~\ref{ThmProjUB} then
guarantees that the linear projection test at $\thetastar$ is locally
minimax optimal.

As previously discussed, the local optimality studied here provides a
finer-grained distinction than the usual notion of global minimax
optimality.  All of the rates depend on the vector $\thetastar$ via
the set $\Ellipse_\at \cap \Ball(\epsilon)$, whose shape changes as
$\at$ moves across the ellipse.  The resulting changes in the local
minimax radii can be quite substantial, as we now illustrate with some
concrete examples.


\subsubsection{Testing at zero}
\label{sec:zero}

We begin our exploration by considering the testing
problem~\eqref{EqnMainTesting} with $\thetastar = 0$.  In order to
characterize the $\at$-localized minimax testing
radius~\eqref{EqnDefnLocalMinimaxOPT}, we define the functions
\begin{align}
\label{EqnDefKzero}
\newupk(\delta; \ellip) \defn \arg \max_{1\leq k \leq \usedim} \{
\mu_k \geq \frac{1}{2}\delta^2 \}, ~\text{ and }~ \newlwk(\delta; \Ellipse)
\defn \arg \max_{1\leq k \leq \usedim} \{ \mu_{k+1} \geq \frac{9 }{16}\delta^2
\}.
\end{align}
In order to ensure that both functions are well-defined, we restrict
our attention to $\delta$ in the interval $\big(0,
\min\{\sqrt{2\mu_1}, \frac{4}{3} \sqrt{\mu_2}\} \big)$.  We now state
a consequence of Theorems~\ref{ThmProjUB} and~\ref{ThmLowerBound} in
terms of these quantities:
\begin{cors}
\label{CorZero}
Given an error level $\rho \in (0,1)$, the local minimax testing
radius $\epsopt(\at; \ellip)$ at $\thetastar = 0$ is bounded as
\begin{align}
\label{EqnMinuet}
 \sup \Big \{ \delta \, \mid \, \delta \leq \frac{1}{4}\sigma^2
 \frac{\sqrt{\newlwk(\delta; \Ellipse)}}{\delta} \Big \} \; \leq \;
 \epsopt(0; \ellip) \; \leq \; \inf \Big \{ \delta \, \mid \, \delta \geq
 \frac{8}{\sqrt{\rho}} \sigma^2 \frac{\sqrt{\newupk(\delta;
     \Ellipse)}}{\delta} \Big \},
\end{align} 	
where we only consider $\delta \leq \min\{\sqrt{2\mu_1}, \frac{4}{3}
\sqrt{\mu_2}\}$.
\end{cors}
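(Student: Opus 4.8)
The plan is to obtain both bounds in~\eqref{EqnMinuet} by specializing Theorems~\ref{ThmProjUB} and~\ref{ThmLowerBound} to the case $\thetastar = 0$, where the recentered ellipse $\ellip_{\thetastar}$ is simply $\ellip$. The one genuinely geometric ingredient is an exact formula for the Kolmogorov widths of the truncated ellipse: I claim that for every $r>0$ and $k\in[\usedim]$,
\begin{align*}
\kwidth_k\big(\ellip \cap \Ball(r)\big) \; = \; \min\{\sqrt{\mu_{k+1}}, \, r\},
\end{align*}
an optimal projection being the coordinate projection onto $\operatorname{span}\{e_1,\dots,e_k\}$. I would prove ``$\le$'' by exhibiting this projection: the residual of any $\theta\in\ellip\cap\Ball(r)$ lives in $\operatorname{span}\{e_{k+1},\dots,e_\usedim\}$ and has squared norm at most $\min\{\mu_{k+1},r^2\}$, since the ellipse constraint alone gives $\sum_{i>k}\theta_i^2 \le \mu_{k+1}\sum_{i>k}\theta_i^2/\mu_i \le \mu_{k+1}$ while the ball gives $\sum_{i>k}\theta_i^2 \le r^2$. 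For ``$\ge$'' I would use the usual dimension count: every $k$-dimensional subspace is orthogonal to some unit vector $u$ supported on the first $k+1$ coordinates, and the point $\min\{\sqrt{\mu_{k+1}},r\}\,u$ lies in $\ellip\cap\Ball(r)$ and projects to zero, so its residual has norm $\min\{\sqrt{\mu_{k+1}},r\}$. (If such a width formula is already recorded in an appendix, I would simply invoke it.)

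Given this formula, the upper bound is immediate. In~\eqref{EqnK-upper} we have $\kwidth_k(\ellip\cap\Ball(\delta)) = \min\{\sqrt{\mu_{k+1}},\delta\}$, and because $\delta/\sqrt2 < \delta$, the defining inequality $\kwidth_k\le\delta/\sqrt2$ is equivalent to $\mu_{k+1}\le\tfrac12\delta^2$; hence the smallest such $k$ equals the largest $k$ with $\mu_k\ge\tfrac12\delta^2$, namely $\newupk(\delta;\ellip)$ (ties, if any, being harmless). Substituting $\upk(\delta,0,\ellip)=\newupk(\delta;\ellip)$ into~\eqref{EqnRadCritU} turns $\epscritu(0;\rho,\ellip)$ into exactly the right-hand side of~\eqref{EqnMinuet}, so Theorem~\ref{ThmProjUB} together with the chain $\epsopt(0;\ellip)\le\epslpt(0;\ellip)\le\epscritu(0;\rho,\ellip)$ gives the upper bound. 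The restriction $\delta\le\min\{\sqrt{2\mu_1},\tfrac43\sqrt{\mu_2}\}$ is precisely what makes $\newupk$ and $\newlwk$ well-defined, as it forces $\mu_1\ge\tfrac12\delta^2$ and $\mu_2\ge\tfrac{9}{16}\delta^2$.

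For the lower bound one can run the same reduction: at $\thetastar=0$ we have $\enorm{\thetastar}=0$, so $(\enorm{\thetastar}^{-1}-1)^2=\infty$ and $\InvRfun\big((\enorm{\thetastar}^{-1}-1)^2\big)=\infty$, which deletes the second term of the minimum in~\eqref{EqnProjLB-KM}; then evaluating~\eqref{EqnK-lower}--\eqref{EqnRadCritL} with the width formula (using $a>3b$, so that $\kwidth_k(\ellip\cap\Ball(a\delta))\le 3b\delta \iff \mu_{k+1}\le 9b^2\delta^2$) yields $\epsopt(0;\ellip)\ge\ccon\,\epscritl(0;\ellip)$, with $\epscritl(0;\ellip)$ of the stated shape up to the choice $b=\tfrac14$ and a one-index shift in the critical dimension. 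To recover the clean constant $\tfrac14$ (rather than $\ccon/4$) and avoid the index shift, the cleanest route is instead to redo the lower-bound argument directly for $\thetastar=0$: put on $\Hyp_1$ the Ingster prior under which $\theta = \tfrac{\delta}{\sqrt{k}}\sum_{j=1}^{k}\varepsilon_j e_j$ with i.i.d.\ Rademacher $\varepsilon_j$ and $k=\newlwk(\delta;\ellip)$, which has $\ltwo{\theta}=\delta$ and lies in $\ellip$ precisely because $\mu_{k+1}\ge\tfrac{9}{16}\delta^2$ controls $\sum_{j\le k}\theta_j^2/\mu_j$; then lower-bound any test's error by $1-\tvnorm{\bar{\mprob}_1-\mprob_0}$ and control $\tvnorm{\cdot}$ through the $\chi^2$-divergence $\cosh^{k}\!\big(\delta^2/(k\sigma^2)\big)-1$, which stays bounded exactly in the regime $\delta\le\tfrac14\sigma^2\sqrt{k}/\delta$.

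I expect the only real work to be (i) the width formula, in particular the ``no $k$-dimensional projection does better'' direction, and (ii) the bookkeeping that matches the ``smallest $k$ such that'' definitions of $\upk,\lwk$ with the ``largest $k$ such that'' definitions of $\newupk,\newlwk$, including ties and the deliberate one-index offset built into $\newlwk$. Everything else is substitution into Theorems~\ref{ThmProjUB} and~\ref{ThmLowerBound}, or the classical two-point $\chi^2$ computation for the direct lower bound.
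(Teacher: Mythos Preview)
Your proposal is essentially correct and tracks the paper's proof closely, with one cosmetic difference and one small slip.

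For the upper bound, the paper does exactly what you do: project onto $\operatorname{span}\{e_1,\dots,e_k\}$ to get $\kwidth_k(\ellip\cap\Ball(\delta))\le\min\{\sqrt{\mu_{k+1}},\delta\}$, identify $\upk(\delta,0,\ellip)$ with $\newupk(\delta;\ellip)$, and invoke Theorem~\ref{ThmProjUB}. You go further and prove the matching lower bound on the width via a dimension count; the paper does not need this and instead packages the one-sided estimate into a generic sandwich lemma (Lemma~\ref{LemEASY}). Either route works; yours is slightly more direct, the paper's is slightly more modular.

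For the lower bound, your ``Option~1'' is precisely the paper's argument: lower-bound the Kolmogorov width by the $\ell_2$-Bernstein width $b_{k,2}(\ellip\cap\Ball(a\epsilon))=\min\{a\epsilon,\sqrt{\mu_{k+1}}\}$, feed this into the sandwich lemma, and invoke Theorem~\ref{ThmLowerBound}. Your observation that this route literally delivers $\ccon\,\epscritl$ rather than the clean $\tfrac14$ in~\eqref{EqnMinuet} is accurate; the paper is tacitly absorbing $\ccon$ into the ``up to constants'' spirit of the corollary. Your ``Option~2'' (the direct Rademacher prior) is not really an alternative: it is exactly Lemma~\ref{LemBernstein}, which is the engine underneath Case~1 of Theorem~\ref{ThmLowerBound}.

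One genuine slip in Option~2: with $k=\newlwk(\delta)$ you only have $\mu_{k+1}\ge\tfrac{9}{16}\delta^2$, hence $\mu_k\ge\tfrac{9}{16}\delta^2$, which gives $\sum_{j\le k}\theta_j^2/\mu_j\le\delta^2/\mu_k\le 16/9$, not $\le 1$. So $\theta$ need not lie in $\ellip$ at amplitude $\delta/\sqrt{k}$; you must shrink to $\tfrac{3}{4}\cdot\delta/\sqrt{k}$ (which is where the $9/16$ comes from) or shift the index. This is exactly the ``bookkeeping'' you anticipate in your last paragraph, but it is not quite as you stated it.
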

\noindent We prove Corollary~\ref{CorZero} in
Section~\ref{AppCorZero}.

The quantities defined on the left and right hand sides of
expression~\eqref{EqnMinuet} exist due to the fact that
$\newupk(\delta; \ellip)$ and $\newlwk(\delta; \Ellipse)$ are
non-increasing functions of $\delta.$ Putting inequalities in
\eqref{EqnMinuet} together, whenever $\newupk$ and $\newlwk$ differ
only by constant factors, we have a tight characterization of the
testing radius $\epsopt(0; \ellip)$ localized to $\thetastar = 0$.

\vspace*{0.5cm}

In order to illustrate, we now apply Corollary~\ref{CorZero} to some
special cases of the ellipse $\ellip$, and see how the minimax testing
radius localized to zero varies as a function of the ellipse
parameters.  In particular, let us consider two broad classes of decay
rates for the ellipse parameters $\{ \mu_j \}_{j}$.

\paragraph{$\gamma$-exponential decay:} 

For some $\gamma > 0$, suppose the scalars $\{ \mu_j \}_{j}$ satisfies a decay
condition of the form $\mu_j = c_1\exp(-c_2 j^\gamma)$, where
$c_1,c_2$ are universal constants.  Such exponential decay rates arise
the problem of non-parametric regression testing
(Example~\ref{ExaRegression}), in particular for reproducing kernel
Hilbert spaces based on Gaussian kernels.  From
expression~\eqref{EqnDefKzero}, we know $\upk = C
\log^{1/\gamma}(1/\epsilon)$ and $\kb = c\log^{1/\gamma}(1/\epsilon)$
and solving the two inequalities from equation~\eqref{EqnMinuet}
yields
\begin{align}
\label{EqnExpZeroRate}
\epsopt^2(0) \asymp \sigma^2 (\log(\frac{1}{\sigma^2}))^{1/2\gamma} .
\end{align}
Here $\asymp$ denotes equality up to a factor involving constants and
terms of the order $\log \log (1/\sigma^2)$.


\paragraph{$\alpha$-polynomial decay:}

Now suppose that the ellipse parameters $\{ \mu_j \}_j$ satisfy a
decay condition of the form $\mu_j = c_1 j^{-2\alpha}$ for some
$\alpha > 1/2$.  As previously discussed, polynomial decay of this
type arises in ellipses that are induced by Sobolev function spaces of
smoothness $\alpha$.  From expression~\eqref{EqnDefKzero}, we know
$\upk = C \epsilon^{-1/\alpha}$ and $\kb = c \epsilon^{-1/\alpha}$ and
solving the two inequalities in equation~\eqref{EqnMinuet} yields
\begin{align}
\label{EqnSobZeroRate}
\epsopt^2(0) \asymp (\sigma^2)^{\frac{4\alpha}{1+ 4\alpha}}.
\end{align}
Here our notation $\asymp$ denotes equality up to constants
independent of $(\sigma, \usedim)$, so that
inequality~\eqref{EqnSobZeroRate} provides a characterization of the
testing radius localized at zero that is tight up to constant factors.
It is worth noticing that that rate~\eqref{EqnSobZeroRate} matches the
(non-localized) globally minimax optimal testing radius established by
Ingster~\cite{ingster93a,ingster93b,ingster93c}.  Thus, testing at
zero is as hard as the hardest testing problem in the ellipse.
Intuitively, this correspondence makes sense because the ellipse
contains a relatively large volume around zero, meaning that the
compound alternative contains a larger number of possible vectors that
need to be distinguished from the null.

Ingster~\cite{ingster93a} proves the lower bound by the direct
analysis of a $\chi^2$-type test, which is then shown to be optimal.
For the estimation analogue of this problem, Ibragimov and
Khasminskii~\cite{ibragimov1978,ibragimov2013statistical} showed that
the global minimax rate for estimation scales as
\mbox{$\epsilon_{\text{est}} \asymp
  (\sigma^2)^{2\alpha/(1+2\alpha)}$}. This slower rate confirms
the intuition that testing is an easier problem than estimation:
we can detect a vector of length much smaller than the accuracy
at which we can estimate.

\paragraph{Remark:}  As a final comment,
Corollary~\ref{CorZero} can be generalized to the $\ell_p$-ellipse
with $p \geq 2$, given by \mbox{$\Ellipse_p \defn \{\theta \in
  \real^\usedim \mid \sum_{i=1}^\usedim \frac{\theta_i^p}{\mu_i} \leq
  1\}$}.  Let us again consider testing at $\thetastar = 0.$ Using the
same technique to prove Corollary~\ref{CorZero}, it is easy to check
that inequalities in equation~\eqref{EqnMinuet} still hold with the
definitions of $\newupk$ and $\newlwk$ in
expression~\eqref{EqnDefKzero} replaced by
\begin{align*}
  \newupk(\delta; \ellip) \defn \arg \max_{1 \leq k \leq \usedim} \big\{
  \mu_{k+1}^{2/p} \geq \frac{1}{2}\delta^2 \big\}, \quad \mbox{and} \quad
  \newlwk(\delta; \ellip) \defn \arg \max_{1 \leq k \leq \usedim} \big\{
  \mu_k^{2/p} \geq \delta^2\big\}.
\end{align*}


\subsubsection{Testing at extremal vectors} 

In the previous section, we studied the behavior of the minimax
testing radius for $\at = 0$, and recovered some known results from
the classical literature.  In this section, we study some non-zero
cases of the vector $\at$.  For concreteness, we restrict our
attention to vectors that are non-zero some coordinate $s \in
[\usedim] = \{1, \ldots, \usedim\}$, and zero in all other
coordinates.  Even for such simple vectors, our analysis reveals some
interesting scaling of the minimax testing radius that is novel.

Given an integer $s \in [\usedim]$ and a positive scalar $\delta$, we
define
\begin{align}
\label{EqnDefKcorner}
\newupk(\delta;s) & \defn \arg \max_{1 \leq k \leq \usedim} \Big
\{\mu_k^2 \geq \frac{1}{64}\delta^2\mu_s \Big \}, ~\text{ and }~
\newlwk(\delta; s) \defn \arg \max_{1 \leq k \leq \usedim} \Big
\{\mu^2_k \geq \delta^2 \mu_s \Big \}.
\end{align}
So as to ensure that these functions are well-defined, we restrict our
attention to $\delta$ in the interval $\big(0, \mu_1/\sqrt{\mu_s}
\big)$.  By definition, we have $\newupk(\delta;s) \geq
\newlwk(\delta;s)$.

Before stating our corollary, let us define the upper and lower
critical radii as follows
\begin{subequations}
\begin{align}
\label{EqnRadKcorner}
  \newepscritu(s, \ellip) &= \inf \Big\{\delta>0 \mid \delta \geq
  \frac{8}{\sqrt{\rho}}
  \sigma^2\frac{\sqrt{\newupk(\delta;s)}}{\delta} \Big\} \\ \text{ and
  }~~\newepscritl(s, \ellip) &= \sup \Big\{\delta>0 \mid \delta \leq
  \frac{1}{4} \sigma^2 \frac{\sqrt{\newlwk(\delta;s)}}{\delta} \Big\}.
\end{align}
\end{subequations}
Using the fact that $\rho\in (0,1/2]$ and non-increasing functions 
satisfy $\newupk(\delta;s) \geq \newlwk(\delta;s)$, we have $\newepscritl(s, \ellip) \leq
  \newepscritu(s, \ellip)$.  Again these two quantities do not depend
  on the $\at$ that we are testing at.  We then define the set
\begin{align*}
  C(s; \Ellipse) & \defn \Big \{ \theta \in \real^\usedim \mid
  \theta_{j} = 0 \quad \mbox{for all $j \neq s$, and} \quad \theta_s
  \in \big [\sqrt{\mu_s} - \newepscritu(s, \ellip), \sqrt{\mu_s}
    -\newepscritl(s, \ellip) \big] \Big \}.
\end{align*}
We are ready to state our corollary:
\begin{cors}
\label{CorCorner}
Consider any positive integer $s$ such that $\newepscritu(s, \ellip)
\leq \sqrt{\mu_s}$.  Then for any testing error level $\rho > 0$, we
have
\begin{align}
\label{EqnCorner}
  \newepscritl(s, \ellip) \,\leq\, \epsopt(\at; \ellip) \,\leq\,
  \newepscritu(s, \ellip) \qquad \mbox{for all $\thetastar \in C(s; \Ellipse)$.}
\end{align}
\end{cors}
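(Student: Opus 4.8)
The plan is to deduce Corollary~\ref{CorCorner} directly from Theorems~\ref{ThmProjUB} and~\ref{ThmLowerBound} by specializing the upper/lower critical dimensions $\upk,\lwk$ (and hence the radii $\epscritu,\epscritl$ and the auxiliary map $\Rfun$) to the extremal null vectors $\thetastar \in C(s;\Ellipse)$. Every such $\thetastar$ is one--sparse, namely $\thetastar = (\sqrt{\mu_s}-\delta_0)\,e_s$ with $\delta_0 := \sqrt{\mu_s}-\thetastar_s \in [\newepscritl(s,\ellip),\newepscritu(s,\ellip)]$; so the whole argument reduces to (i) a sharp two--sided estimate of the Kolmogorov widths $\kwidth_k\big(\ellip_{\thetastar}\cap\Ball(\rad)\big)$ for these vectors, and (ii) the same monotonicity bookkeeping used in Appendix~\ref{AppSolution} to pass from a width bound to the critical radii.

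The geometric heart is the following lemma, which I would prove first: for $\thetastar=(\sqrt{\mu_s}-\delta_0)e_s$ with $0<\delta_0\le\sqrt{\mu_s}$ and any $\rad \le \sqrt{\mu_s}$,
\[
 \kwidth_k\big(\ellip_{\thetastar}\cap\Ball(\rad)\big) \;\asymp\; \min\Big\{\rad,\; \sqrt{\tfrac{\delta_0\,\mu_{k+1}}{\sqrt{\mu_s}}}\,\Big\},
\]
with absolute constants. The proof is elementary: writing $v=\theta-\thetastar$, membership $v\in\ellip_{\thetastar}$ is equivalent to $\sum_{i\ne s} v_i^2/\mu_i \le \tfrac{\delta_0-v_s}{\mu_s}\big(2\sqrt{\mu_s}-(\delta_0-v_s)\big)$, whose right-hand side, on the regime $\rad\asymp\delta_0$ relevant to the critical radii, is comparable to $\delta_0/\sqrt{\mu_s}$; thus $\ellip_{\thetastar}\cap\Ball(\rad)$ is, up to constants, the coordinate-aligned cap $\{v:\sum_{i\ne s} v_i^2/\mu_i \le c\,\delta_0/\sqrt{\mu_s}\}\cap\Ball(\rad)$. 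For such a cap the extremal $k$-dimensional projection keeps $e_s$ together with the $k-1$ coordinates of largest $\mu_i$, and the residual is squeezed between constant multiples of $\sqrt{\mu_{k+1}}\big(\sum_{i\ne s} v_i^2/\mu_i\big)^{1/2}$ (upper bound) and of $\sqrt{\mu_{k+1}\,\delta_0/\sqrt{\mu_s}}\wedge\rad$ realized by a round ball inside the cap (lower bound, using that a $k$-plane misses a $(k+1)$-dimensional ball by at least its radius). I expect this lemma, and in particular keeping its numerical constants aligned with the $1/64$ and $3b$ appearing in $\newupk(\cdot;s),\newlwk(\cdot;s)$ and in $\upk,\lwk$, to be the main obstacle; everything after it is routine.

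Given the lemma, the translation goes as follows. For the \emph{upper bound}, evaluate at $\epsilon=\newepscritu(s,\ellip)$: since $\delta_0\le\newepscritu=\epsilon$, the width estimate forces $\kwidth_k(\ellip_{\thetastar}\cap\Ball(\epsilon))\le\epsilon/\sqrt2$ as soon as $\mu_{k+1}\lesssim\epsilon\sqrt{\mu_s}$, hence $\upk(\epsilon,\thetastar,\ellip)\le\newupk(\epsilon;s)$; plugging this into~\eqref{EqnRadCritU} and invoking monotonicity of $\epsilon\mapsto\upk(\epsilon,\thetastar,\ellip)$ yields $\epscritu(\thetastar;\ellip)\le\newepscritu(s,\ellip)$, and then Theorem~\ref{ThmProjUB} gives $\epsopt(\thetastar;\ellip)\le\epslpt(\thetastar;\ellip)\le\epscritu(\thetastar;\ellip)\le\newepscritu(s,\ellip)$. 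For the \emph{lower bound}, evaluate at $\epsilon=\newepscritl(s,\ellip)$: here $\delta_0\ge\newepscritl=\epsilon$, and because $a>3b$ the ``ball'' term $a\epsilon$ in $\kwidth_k(\ellip_{\thetastar}\cap\Ball(a\epsilon))$ is always $>3b\epsilon$, so the lower half of the lemma shows $\kwidth_k>3b\epsilon$ whenever $\mu_{k+1}$ exceeds an absolute constant times $\epsilon\sqrt{\mu_s}$; this gives $\lwk(\epsilon,\thetastar,\ellip)\ge\newlwk(\epsilon;s)$ and hence $\epscritl(\thetastar;\ellip)\ge\newepscritl(s,\ellip)$.

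It remains to check that the second term in Theorem~\ref{ThmLowerBound} is not binding. Since $\thetastar$ is one--sparse, $\enorm{\thetastar}=1-\delta_0/\sqrt{\mu_s}$ and the definition~\eqref{EqnRfun} of $\Rfun$ collapses to $\Rfun(\delta)=1\wedge \tfrac{a\delta\mu_s}{\sqrt{\mu_s}-\delta_0-a\delta}$, which has an explicit inverse; the constraint $\delta_0\ge\newepscritl(s,\ellip)$ (the left endpoint of the interval defining $C(s;\Ellipse)$) is precisely what is needed to conclude that $\Rfun^{-1}\big((\enorm{\thetastar}^{-1}-1)^2\big)\ge\epscritl(\thetastar;\ellip)$, so that $\min\{\epscritl(\thetastar;\ellip),\Rfun^{-1}(\cdots)\}=\epscritl(\thetastar;\ellip)$. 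Combining this with the previous step and absorbing the constant $\ccon$ of~\eqref{EqnFrog} into the (constant-factor) gap between $\newepscritl(s,\ellip)$ and $\epscritl(\thetastar;\ellip)$ — the constants in $\newupk,\newlwk$ being chosen to allow exactly this — gives $\newepscritl(s,\ellip)\le\epsopt(\thetastar;\ellip)$, which together with the upper bound completes the proof.
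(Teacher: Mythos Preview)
Your upper-bound plan is close to what the paper does (the paper also compares $\upk(\delta,\thetastar,\ellip)$ to $\newupk(\delta;s)$, via an explicit two-coordinate optimization rather than a general width lemma), so that half is fine modulo bookkeeping.

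The lower-bound route through Theorem~\ref{ThmLowerBound} has a genuine gap: for the boundary points in $C(s;\Ellipse)$ the $\Rfun$ term is \emph{always} the binding one, and it is far too small. With $\thetastar=(\sqrt{\mu_s}-\delta_0)e_s$ one has $\enorm{\thetastar}=1-\delta_0/\sqrt{\mu_s}$, so $(\enorm{\thetastar}^{-1}-1)^2=\delta_0^2/(\sqrt{\mu_s}-\delta_0)^2$, while your own formula gives $\Rfun(\delta)\approx a\delta\sqrt{\mu_s}$ for the relevant range. Hence
\[
\Rfun^{-1}\!\big((\enorm{\thetastar}^{-1}-1)^2\big)\;\asymp\;\frac{\delta_0^2}{a\,\mu_s^{3/2}},
\]
and at the left endpoint $\delta_0=\newepscritl(s,\ellip)$ this is of order $\newepscritl^2/\mu_s^{3/2}$, not $\newepscritl$. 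In the Sobolev example with $s=1$, $\mu_s=1$, this quantity is $\asymp(\sigma^2)^{8\alpha/(8\alpha+1)}\ll(\sigma^2)^{4\alpha/(8\alpha+1)}=\newepscritl$. So your sentence ``the constraint $\delta_0\ge\newepscritl(s,\ellip)$ is precisely what is needed to conclude that $\Rfun^{-1}(\cdots)\ge\epscritl(\thetastar;\ellip)$'' is false, and no amount of constant-absorption (which in any case is not available, since the constants $1/64$ and $1$ in $\newupk,\newlwk$ are fixed by the corollary's statement) can repair it.

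The paper sidesteps this entirely: for the lower bound it does \emph{not} invoke Theorem~\ref{ThmLowerBound} but uses Lemma~\ref{LemBernstein} (the $\ell_\infty$-Bernstein-width bound) directly. One simply exhibits an $(m-1)$-dimensional cube of side $\delta/\sqrt{m}$, centered at $\thetastar$ and lying in the coordinates $\{1,\dots,m\}\setminus\{s\}$, and checks by a one-line ellipse-norm computation (using $\mu_m^2\ge\delta^2\mu_s$ and $\delta_0\ge\newepscritl$) that it sits inside $\Ellipse$. This gives $\kb(\delta,\thetastar,\ellip)\ge\newlwk(\delta;s)$ with no boundary penalty, and Lemma~\ref{LemEASY} then yields $\epsopt(\thetastar;\ellip)\ge\epscritb(\thetastar;\ellip)\ge\newepscritl(s,\ellip)$ exactly. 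You should replace your lower-bound argument with this Bernstein construction.
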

\noindent 
See Section~\ref{AppCorCorner} for the proof of
Corollary~\ref{CorCorner}.

Corollary~\ref{CorCorner} characterizes the optimal testing radius for
vectors that are sufficiently close to the boundary of the ellipse, In
order to compare with the results of Section~\ref{sec:zero} for
testing at zero, let us apply Corollary~\ref{CorCorner} to some
special examples of the ellipse $\ellip$, and see how the testing
radius varies as we vary the index $s \in [\usedim]$.
For these comparisons, we restrict our attention to ellipses
for which the parameters $\{\mu_j\}$ decay at a polynomial rate.


\paragraph{$\alpha$-Polynomial decay:}

Suppose that the scalars $\{ \mu_j \}_{j}$ satisfy a decay condition of the form
\mbox{$\mu_j = c_1 j^{-2\alpha}$} for some $\alpha > 1/2$.  Recalling
the definition~\eqref{EqnDefKcorner}, some calculations yield
\begin{align*}
  \frac{\mu_{\mind_u}^2}{\mu_s} & = \frac{\mind_u^{-4 \alpha}}{s^{-
      2\alpha}} ~\geq~ \frac{\delta^2}{64} ~\Longrightarrow~ \mind_u =
  \lfloor (64 s^{2\alpha})^{\frac{1}{4\alpha}}
  \delta^{-\frac{1}{2\alpha}} \rfloor, \qquad \mbox{and} \\
  \frac{\mu_{\mind_\ell}^2}{\mu_s} & = \frac{\mind_\ell^{-4
      \alpha}}{s^{- 2\alpha}} ~\geq~ \delta^2 ~\Longrightarrow~
  \mind_\ell = \lfloor s^{\frac{1}{2}} \delta^{-\frac{1}{2\alpha}}
  \rfloor.
\end{align*}
Solving the fixed point equations yields
\begin{align*}
  \newepscritu(s, \ellip) =
  \Big(8^{\frac{4\alpha+1}{4\alpha}}\rho^{-\frac{1}{2}}s^{\frac{1}{4}}
    \sigma^2 \Big)^{\frac{4\alpha}{8\alpha+1}}
\qquad \text{and }~~ \newepscritl(s, \ellip) = \Big(\frac{1}{4}
  s^{\frac{1}{4}} \sigma^2 \Big)^{\frac{4\alpha}{8\alpha+1}}.
\end{align*}
If we omit the dependence on the level $\rho$ and constants, the
optimal testing radius is given by
\begin{align}
\label{EqnSobCornerRate}
\epsopt^2(\thetastar; \ellip)  \asymp (\sigma^2 s^{1/4})^{\frac{8\alpha}{1 + 8\alpha}}
\qquad 
\text{and }~~
\mind_u,\mind_\ell \asymp (\sigma^{-2}s^{2\alpha})^{\frac{2}{8\alpha+1}}
\end{align}
whenever\footnote{The constraint on $s$ arises from the fact that
  $\newepscritu(s, \ellip) \leq \sqrt{\mu_s}$.} $s \lesssim
(\sigma^2)^{-2/(4\alpha+1)}$.  As the choice of $s$ is varied,
inequality~\eqref{EqnSobCornerRate} actually provides us with a range
of different results.
\begin{itemize}
\item[(a)] First considering $s = 1$, suppose that $\at$ is zero at
  every coordinate except the first: we then have
\begin{align}
  \label{EqnFastRate}
  \epsopt^2(\thetastar;\ellip) \asymp (\sigma^2)^{\frac{8\alpha}{1 +
      8\alpha}},
\end{align}
where $\asymp$ means equal up to a constant that is independent of
problem parameters $\usedim$ and $\sigma$.  To the best of our
knowledge, this is a novel result on ellipse testing.  It shows that
that the local minimax rate at this extremal vector is very different
from the corresponding rate at zero, which as we have established in
our Section~\ref{sec:zero} is equal $(\sigma^2)^{\frac{4\alpha}{1 +
    4\alpha}}$.
\item[(b)] On the other hand, let us consider a sequence (indexed by
  $\sigma$) of the form $s = (\sigma^2)^{-\beta}$ for some $\beta \in
  \Big( 0, \frac{2}{(4\alpha+1)} \Big)$. Over such a sequence,
  inequality~\eqref{EqnSobCornerRate} shows that the minimax
  testing radius scales as
\begin{align}
\epsopt^2(\thetastar;\ellip) \asymp
(\sigma^2)^{\frac{2\alpha(4-\beta)}{1 + 8\alpha}}.
\end{align}
As $\beta$ ranges from $0$ to $\frac{2}{(4\alpha+1)}$, the testing
radius $\epsopt^2(\thetastar; \ellip)$ ranges from
$(\sigma^2)^{\frac{4\alpha}{1 + 4\alpha}}$ to
$(\sigma^2)^{\frac{8\alpha}{1 + 8\alpha}}$.
\end{itemize}


\section{Proofs of main results}
\label{sec:main_proof}

We now turn to the proofs of our main results, with the 
proof of Theorems~\ref{ThmProjUB} and \ref{ThmLowerBound}
given in Sections~\ref{AppThmUB} and \ref{AppThmKLB} 
respectively, followed by the proofs of 
Corollary~\ref{CorZero} and \ref{CorCorner} in 
Sections~\ref{AppCorZero} and \ref{AppCorCorner}, respectively.
In all cases, we defer the proofs of certain more technical lemmas to the appendices.


\subsection{Proof of Theorem~\ref{ThmProjUB}}
\label{AppThmUB}

From our discussion following the statement of
Theorem~\ref{ThmProjUB}, recall that the optimal linear
test~\eqref{EqnOptTest} is based on the statistic
\begin{align}
  \label{EqnSunset}
  \psi(y) \defn \Ind \big[ \ltwo{\Pi_{\upkstar}(y - \thetastar)}^2
    \geq \sigma^2(\upkstar + \sqrt{4\upkstar/\rho}) \big],
\end{align}
where the critical upper dimension $\upkstar \defn
\upk(\epscritu(\thetastar ; \rho, \ellip), \thetastar, \ellip)$ is
computed from equation~\eqref{EqnK-upper}.

From here onwards, we adopt $\epsilon$ and $k$ as convenient
shorthands for the quantities $\epscritu$ and $\upkstar$ respectively.
We let $\trunc{k}$ denote the projection matrix that defines the
Kolmogorov $k$-width of $\ellip_\thetastar \cap
\Ball(\epsilon)$---that is
\begin{align}
\label{EqnAllegro}
\trunc{k} \defn \arg \min_{\trunc{k} \in \mathcal{P}_k} \max_{\Delta
  \in \ellip_{\thetastar} \cap \Ball(\epsilon)} \|\Delta - \trunc{k}
\Delta\|_2^2.
\end{align}

\paragraph{Controlling the type I error:} We first show that the
test statistic~\eqref{EqnSunset} has type I error at most
$\rho/2$. Under the null hypothesis $\Hyp_0$, we have $y \sim
N(\thetastar, \sigma^2 I_{\usedim})$.  Since $\trunc{k}$ is an orthogonal
projection matrix, the rotation invariance of the Gaussian
distribution guarantees that
\begin{align*}
T_k & \defn \|\trunc{k} (y - \thetastar)\|_2^2 \stackrel{d}{=}
\|\tilde{w}\|_2^2, \qquad \mbox{where $\tilde{w} \sim \NORMAL(0,
  \sigma^2 \Ind_k)$.}
\end{align*}
Consequently, the statistic $T_k/\sigma^2$ follows a
$\chi_k^2$-distribution, and hence
\begin{align}
\label{EqnTailtypeIGen}
\Prob_{0} \big[ \psi(y) = 1 \big] & = \Prob_{\thetastar} \big[T_k \geq
  \sigma^2(k + \sqrt{4k/\rho}) \big] \leq \frac{\rho}{2},
\end{align}
where the last step follows from Chebyshev's inequality.


\paragraph{Controlling the type II error:} Under the alternative
hypothesis $\Hyp_1$, we have $y \sim N(\theta, \sigma^2 I_{\usedim})$
for some vector $\theta \in \Ellipse$ with $\|\theta - \thetastar\|_2
\geq \epsilon$.  Since $\trunc{k}$ is a linear operator, we have
\begin{align*}
T_k = \|\trunc{k}(y - \thetastar)\|_2^2 = \|\trunc{k} w +
\trunc{k}(\theta - \thetastar)\|_2^2, \qquad \mbox{where $w \sim N(0,
  \sigma^2 I_{\usedim})$.}
\end{align*}
Using the fact that Gaussian distribution is rotation invariant, the
statistic $T_k/\sigma^2$ follows a non-central $\chi^2_k(c_0)$
distribution, defined by the recentering $c_0 = \|\trunc{k}(\theta
-\thetastar) \|_2^2$.

For each vector $\theta \in \Ellipse$ such that $\|\theta -
\thetastar\|_2\geq \epsilon$, we claim that
\begin{align}
\label{EqnAdagio}
c_0 = \|\trunc{k}(\theta -\thetastar) \|^2_2 \geq \min_{u \in \ellip,
  \|u - \thetastar\|_2 = \epsilon} \{\|\trunc{k}(u -\thetastar) \|^2_2
\}.
\end{align}
We take inequality~\eqref{EqnAdagio} as given for the moment,
returning to prove it at the end of this section.  Since $\trunc{k}$
is an orthogonal projection onto a subspace, the Pythagorean theorem
guarantees that
\begin{align*}
\|\trunc{k}(u -\thetastar) \|^2_2 = \|(u -\thetastar)\|_2^2 - \|(u
-\thetastar) - \trunc{k}(u -\thetastar) \|^2_2.
\end{align*}
Consequently, we have
\begin{align*}
c_0 = \|\trunc{k}(\theta - \thetastar) \|^2_2 & \stackrel{(i)}{\geq}
\min_{u \in \ellip, \|u - \thetastar\|_2 = \epsilon} \, \left\{ \|(u
-\thetastar)\|_2^2 - \|(u - \thetastar) - \trunc{k}(u -\thetastar)
\|^2_2 \right\} \\
&= \epsilon^2 - \max_{u \in \ellip, \|u - \thetastar\|_2 = \epsilon}
\|(u -\thetastar) - \trunc{k}(u -\thetastar)\|_2^2 \\
& \geq \epsilon^2 - \max_{u \in \ellip, \|u - \thetastar\|_2 \leq
  \epsilon} \|(u - \thetastar) - \trunc{k}(u -\thetastar)\|_2^2,
\end{align*}
where inequality (i) makes use of inequality~\eqref{EqnAdagio}.
Recalling the definition of $\trunc{k}$ from
equation~\eqref{EqnAllegro}, we find that
\begin{align*}
c_0 &~\geq \epsilon^2 - \min_{\trunc{k} \in \mathcal{P}_k}
\max_{\theta \in \ellip_{\thetastar} \cap \Ball(\epsilon)} \|\Delta -
\trunc{k}\Delta\|_2^2 \; = \; \epsilon^2 -
\kwidth_k^2(\ellip_{\thetastar} \cap \Ball(\epsilon))
\stackrel{(ii)}{\geq} \frac{\epsilon^2}{2},
\end{align*}
where inequality (ii) follows since the critical dimension $k$ was
chosen so that \mbox{$\kwidth_k(\ellip_{\thetastar} \cap
  \Ball(\epsilon)) \leq \frac{\epsilon}{\sqrt{2}}$.}

Now we are ready to bound the type II error.  For any vector $\theta$
in the alternative, we have
\begin{align*}
\Prob_\theta[\psi(y) = 0] &= \Prob_{\theta} \big[ T_k \leq \sigma^2(k
  + \sqrt{4k/\rho}) \big]
= \Prob_{\theta} \Biggr[ \frac{T_k - \sigma^2 k -
    c_0}{\sigma^2\sqrt{k}} \leq \frac{2}{\sqrt{\rho}} -
  \frac{c_0}{\sigma^2\sqrt{k}} \Biggr].
\end{align*}
Together, the two inequalities $c_0\geq \epsilon^2/2$ and $\epsilon^2
\geq 8\rho^{-1/2} \sigma^2\sqrt{k}$ imply that $c_0/\sigma^2\sqrt{k}
\geq 4\rho^{-1/2}$, whence
\begin{align*}
\Prob_\theta[\psi(y) = 0] \notag &\leq \Prob_{\theta} \left[ \frac{T_k
    - \sigma^2 k - c_0}{\sigma^2\sqrt{k}} \leq \frac{2}{\sqrt{\rho}} -
  \frac{4}{\sqrt{\rho}} \right] \leq \frac{\rho}{2},
\end{align*}
where the last inequality is again due to Chebyshev's inequality.
Therefore whenever $\epsilon^2 \geq 8\rho^{-1/2} \sigma^2\sqrt{k}$,
then type II error is upper bounded by
\begin{align}
\label{EqnTailtypeIIGen}
\sup_{\theta \in \Hyp_1} \Prob_\theta \big[\psi(y) = 0 \big] \leq
\frac{\rho}{2}.
\end{align}
Combining inequalities~\eqref{EqnTailtypeIGen}
and~\eqref{EqnTailtypeIIGen} yields the claim.


\paragraph{Proof of inequality~\eqref{EqnAdagio}:}

The only remaining detail in the proof of Theorem~\ref{ThmProjUB} is
to establish inequality~\eqref{EqnAdagio}.  It suffices to show that
the minimum
\begin{align}
  \label{EqnRabbit}
\min_{ u \in \Ellipse, \; \|u - \thetastar\|_2 \geq \epsilon}
\|\trunc{k}(\theta -\thetastar) \|^2_2
\end{align}
is always achieved by a vector $u^* \in \Ellipse$ with $\|u^* -
\thetastar\|_2 = \epsilon$.

Noting the continuity of the objective and the compactness of the
constraint set, Weierstrass' theorem ensures that the minimum is
always achieved.  Leting $\uhat$ be any vector that achieves the
minimum, we define the new vector
\begin{align*}
\ustar & \defn (1 - \alpha) \thetastar + \alpha \uhat \qquad
\mbox{where $\alpha \defn \frac{\epsilon}{\|\uhat - \thetastar\|_2} \in (0,1]$.}
\end{align*}
The convexity of $\Ellipse$ ensures that $\ustar \in \Ellipse$, and
moreover, by construction, we have $\|\ustar - \thetastar\|_2 =
\epsilon$.  Thus, the vector $\ustar$ is feasible for the original
problem~\eqref{EqnRabbit}, and by linearity of projection, we have
\begin{align*}
\|\trunc{k}(\ustar - \thetastar\|_2 & \leq (1- \alpha)
\|\trunc{k}(0)\|_2 + \alpha \|\trunc{k}(\uhat - \thetastar)\|_2 \;
\leq \; \|\trunc{k}(\uhat - \thetastar)\|_2,
\end{align*}
showing that $\ustar$ is also a minimizer, as claimed.


\subsection{Proof of Theorem~\ref{ThmLowerBound}}
\label{AppThmKLB}

We now turn to the proof of the lower bound stated in
Theorem~\ref{ThmLowerBound}.  This proof, as well as subsequent ones,
makes use of the Bernstein width as a lower bound on the Kolmogorov
width.  Accordingly, we begin by introducing some background on it, as
well stating an auxiliary lemma that shows how to obtain testing error
lower bounds via Bernstein width.


\subsubsection{Lower bounds via the Bernstein width}
\label{SecBernstein}

The Bernstein $k$-width of a compact set $C$ is the radius of the
largest $k+1$-dimensional ball that can be inscribed into $C$.
Typically, we use the Euclidean ($\ell_2$) norm to define the ball,
but more generally, we can define widths respect to the $\ell_p$-norms
with $p \in [1, \infty]$.  More precisely, for a given integer $k \geq
1$, let $\mathcal{S}_{k+1}$ denote the set of all $(k+1)$-dimensional
subspaces.  With this notation, the \emph{Bernstein $k$-width in
  $\ell_p$-norm} is given by
\begin{align}
\label{EqnBernstein}
b_{k,p}(C) & \defn \arg \max_{r \geq 0} \Big \{ \Ball^{k+1}_p(r)
\subseteq C \cap S \quad \mbox{for some subspace $S \in
  \mathcal{S}_{k+1}$} \Big \},
\end{align}
where $\Ball^{k+1}_{p}(r) \defn \{ u \in \real^{k+1} \mid \|u\|_p \leq
r\}$ denotes a $(k+1)$-dimensional $\ell_p$-ball of radius $r$
centered at zero.

An important fact is that the Kolmogorov $k$-width from
equation~\eqref{EqnKolmogorov} can always be lower bounded by the
Bernstein $k$-width in $\ell_2$---that is,
\begin{align} 
\label{EqnCompr}
\kwidth_k(C) & \geq b_{k,2}(C) \quad \mbox{for any $k = 1, 2,
  \ldots$.}
\end{align}
See Pinkus~\cite{pinkus2012n} for more details of this property.
Moreover, comparing the Bernstein widths in $\ell_2$ and $\ell_\infty$
norm, we have the sandwich relation
\begin{align}
  \label{EqnSandwichBernstein}
b_{k, \infty}(C) \; \leq \; b_{k, 2}(C) \; \leq \; \sqrt{k+1} \; b_{k,
  \infty}(C),
\end{align}
which is an elementary consequence of the fact that $1 \leq
\frac{\|u\|_2 \; \,}{\|u\|_\infty} \leq \sqrt{k+1}$ for any non-zero vector
$u \in \real^{k+1}$.

We now use these definitions to define a lower bound on the testing
error in terms of Bernstein width in $\ell_\infty$.  Define the
$\ell_\infty$-Bernstein lower critical dimension as
\begin{subequations}
\begin{align}
\label{EqnBK}
\kb(\epsilon, \thetastar, \ellip) \defn \arg \max_{1\leq k \leq \usedim}
\Big\{ k b^2_{k - 1,\infty}(\ellip_{\thetastar}) \geq \epsilon^2 \Big\},
\end{align}
and the corresponding lower critical radius
\begin{align}
\label{EqnCriBK}
  \epscritb(\thetastar ; \ellip) & \defn \sup \Big \{ \epsilon \mid
  \epsilon \leq \frac{1}{4} \sigma^2 \frac{\sqrt{\kb(\epsilon, \thetastar,
      \ellip)}}{\epsilon} \Big \}.
\end{align}
\end{subequations}
In terms of these quantities, we have the following auxiliary result:
\begin{lems}
\label{LemBernstein}
For any vector $\thetastar \in \ellip$, we have
\begin{align}
\label{EqnLBGen} 
\inf_{\psi} \UNIERR(\psi; \thetastar, \delta) \geq \frac{1}{2} \qquad
\text{for all } \delta \leq \epscritb(\thetastar ; \ellip).
\end{align}
\end{lems}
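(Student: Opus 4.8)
The plan is to prove impossibility of testing through a standard Bayesian (``fuzzy hypotheses'') reduction.  For any prior distribution $\pi$ supported on the alternative $\{\theta \in \ellip : \ltwo{\theta - \thetastar} \geq \delta\}$, every measurable test $\psi$ satisfies
\[
\UNIERR(\psi; \thetastar, \delta) \; \geq \; \Exs_{\thetastar}\big[\psi(y)\big] \, + \, \Exs_{\theta \sim \pi}\Exs_\theta\big[1 - \psi(y)\big] \; \geq \; 1 - \tvnorm{\Prob_{\thetastar} - \qprob},
\]
where $\Prob_{\thetastar}$ is the $N(\thetastar, \sigma^2 I_\usedim)$ law and $\qprob \defn \Exs_{\theta \sim \pi}\Prob_\theta$ is the mixture it induces over the alternative.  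It therefore suffices to construct a prior $\pi$, supported on the alternative, for which $\tvnorm{\Prob_{\thetastar} - \qprob} \leq \tfrac12$; the idea is to spread $\pi$ over the vertices of a hypercube of alternatives hidden inside the recentered ellipse $\ellip_{\thetastar}$, and the role of the $\ell_\infty$-Bernstein width is precisely to certify that such a cube fits.

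Concretely, I would take $k \defn \kb(\delta, \thetastar, \ellip)$, which is well defined because $\delta \leq \epscritb$ (see Appendix~\ref{AppSolution}).  By the definition~\eqref{EqnBernstein} of the $\ell_\infty$-Bernstein width, there is a $k$-dimensional subspace $S \subseteq \real^\usedim$ with orthonormal basis $u_1, \dots, u_k$ such that the cube $\big\{\sum_{i=1}^k a_i u_i : |a_i| \leq r \big\}$ lies inside $\ellip_{\thetastar}$, where $r \defn b_{k-1,\infty}(\ellip_{\thetastar})$; and the defining inequality~\eqref{EqnBK} for $\kb$ gives $k r^2 \geq \delta^2$.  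Set $\tilde r \defn \delta/\sqrt{k} \leq r$ and let $\pi$ be the law of $\theta = \thetastar + \tilde r \sum_{i=1}^k \varepsilon_i u_i$ for i.i.d.\ Rademacher signs $\varepsilon_1, \dots, \varepsilon_k$.  Each realization belongs to $\ellip$ (the cube of half-width $\tilde r \leq r$ sits inside $\ellip_{\thetastar}$) and satisfies $\ltwo{\theta - \thetastar} = \tilde r\sqrt{k} = \delta$, so $\pi$ is supported on the alternative, as required.

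It then remains to control $\tvnorm{\Prob_{\thetastar} - \qprob}$.  Since all component means of $\qprob$, and the mean of $\Prob_{\thetastar}$, lie in the affine subspace $\thetastar + S$, and the noise is isotropic, the component of $y - \thetastar$ orthogonal to $S$ has the same law under both measures and is independent of the part lying in $S$; hence the total variation distance reduces to one between product measures on $\real^k$, namely $N(0, \sigma^2 I_k)$ versus $\Exs_\varepsilon N(\tilde r \varepsilon, \sigma^2 I_k)$.  I would then combine the bound $\tvnorm{\Prob_{\thetastar} - \qprob} \leq \tfrac12\sqrt{\chi^2(\qprob \,\|\, \Prob_{\thetastar})}$ with the classical $\chi^2$ identity for Gaussian location mixtures, which here yields
\[
1 + \chi^2(\qprob \,\|\, \Prob_{\thetastar}) \; = \; \Exs_{\varepsilon, \varepsilon'} \exp\!\Big(\frac{\tilde r^2 \inprod{\varepsilon}{\varepsilon'}}{\sigma^2}\Big) \; = \; \Big(\cosh \frac{\tilde r^2}{\sigma^2}\Big)^{k} \; \leq \; \exp\!\Big(\frac{k \tilde r^4}{2\sigma^4}\Big),
\]
where $\varepsilon, \varepsilon'$ are independent uniform vectors in $\{-1,+1\}^k$ and we used $\cosh t \leq e^{t^2/2}$.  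Finally, the constraint $\delta \leq \epscritb$ and the definition~\eqref{EqnCriBK} of $\epscritb$ (together with the fact that $\epsilon \mapsto \kb(\epsilon,\thetastar,\ellip)$ is non-increasing) force $\delta^2 \leq \tfrac14 \sigma^2 \sqrt{k}$, so that $k \tilde r^4 = \delta^4/k \leq \tfrac1{16}\sigma^4$, whence $\chi^2(\qprob \,\|\, \Prob_{\thetastar}) \leq e^{1/32} - 1 < 1$ and $\tvnorm{\Prob_{\thetastar} - \qprob} \leq \tfrac12 \sqrt{e^{1/32}-1} < \tfrac12$.  Combining with the first display completes the proof.

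The variational lower bound by TV and the $\chi^2$ computation for Gaussian mixtures are routine; the step I expect to carry the weight is the geometric bookkeeping of the second paragraph, namely choosing the cube half-width $\tilde r = \delta/\sqrt{k}$ so that simultaneously the cube fits inside $\ellip_{\thetastar}$ (this is exactly $\tilde r \leq b_{\kb-1,\infty}(\ellip_{\thetastar})$, from the definition of $\kb$) and the resulting alternatives are $\delta$-separated while keeping $k\tilde r^4 \lesssim \sigma^4$ (which is what the fixed-point relation $\delta \leq \epscritb$ provides).  Beyond matching these two requirements against the stated constant $\tfrac14$, I anticipate no genuine difficulty.
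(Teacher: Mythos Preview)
Your proposal is correct and follows essentially the same argument as the paper: both place a uniform prior on the $2^k$ sign vertices of a cube inscribed in $\ellip_{\thetastar}$ via the $\ell_\infty$-Bernstein width, reduce to the $\chi^2$/Ingster moment $\Exs_{\varepsilon,\varepsilon'}\exp(\tilde r^2\inprod{\varepsilon}{\varepsilon'}/\sigma^2)=(\cosh(\tilde r^2/\sigma^2))^k$, and close with the fixed-point bound $\delta^2\le\tfrac14\sigma^2\sqrt{k}$. The only cosmetic differences are that the paper invokes its Lemma~\ref{LemChisquareBound} rather than rederiving the TV--$\chi^2$ inequality, and uses the slightly weaker bound $\cosh x\le 1+x^2\le e^{x^2}$ (for $|x|\le 1/2$) in place of your $\cosh t\le e^{t^2/2}$, yielding $e^{1/16}-1$ instead of your $e^{1/32}-1$.
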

\noindent See Appendix~\ref{AppThmBLB} for the proof of this lemma.

Lemma~\ref{LemBernstein} is useful because evaluating the Bernstein
width is often easier than evaluating the Kolmogorov width.  Note that
we also have a (possibly weaker) lower bound in term of
$\ell_2$-Bernstein widths.  In particular, suppose that we define the
$\ell_2$-Bernstein critical dimension
\begin{align}
\label{EqnBernAlt}
  \newkb(\epsilon, \thetastar, \ellip) \defn \arg \max_{1\leq k \leq \usedim}
  \Big\{ b^2_{k - 1,2}(\ellip_{\thetastar}) \geq \epsilon^2 \Big\}
\end{align}  
and then define the critical radius $\epscritb$ using $\newkb$.  By
the sandwich relation~\eqref{EqnSandwichBernstein}, the lower
bound~\eqref{EqnLBGen} still holds.


\subsubsection{Main portion of proof}

Let $D \defn \max_{\theta \in \Ellipse} \|\theta - \thetastar\|_2$
denote the diameter of the ellipse with respect to $\thetastar$.  We
need only consider testing radii $\epsilon \leq D$, since otherwise,
the alternative is empty.  Moreover, we need only consider noise
standard deviations $\sigma \leq D$, since otherwise even the simple
(non-compound) testing problem is hard.

Next, we claim that $\epsopt^2(\thetastar ; \ellip) \geq \sigma^2$.
In order to establish this claim, we consider a simple hypothesis test
between a pair $(\thetastar, \theta)$ both in the ellipse, and such
that $\ltwo{\thetastar - \theta} = \delta$ for some \mbox{$\delta \in
  [\sigma, D]$.}  This single-to-single test requires separation
$\delta^2 \geq \sigma^2$, a classical fact that can be proved by
various arguments, for example, by analyzing the likelihood ratio and
using Neyman-Pearson lemma.  Since our composite testing
problem~\eqref{EqnMainTesting} is at least as hard, it follows that
$\epsopt^2(\thetastar ; \ellip) \geq \sigma^2$.

For the remainder of our analysis, we assume that $\sqrt{\lwk} \geq
18$.  (Otherwise, the squared critical radius $\epscritu$ from in
equation~\eqref{EqnRadCritL}) is upper bounded by $9\sigma^2/2$, so
that the claim of Theorem~\ref{ThmLowerBound} follows by adjusting the
constant $c$, combined with the lower bound $\epsopt^2(\thetastar ;
\ellip) \geq \sigma^2$.)  We now divide our analysis into two cases,
depending on whether or not $\enorm{\at} \leq 1/2$.

\paragraph{Case 1:} First, suppose that $\enorm{\at} \leq 1/2$, which
implies that $\Rfun(\delta) \leq (\enorm{\at}-1)^2\leq 1$.  In this
case, we apply the $\ell_2$-Bernstein-width based lower bound from
Lemma~\ref{LemBernstein} to establish our theorem.  Define the
quantities $\newkb(\epsilon, \thetastar, \ellip) \defn \arg \max
\limits_{1 \leq k \leq \usedim} \Big\{ b_{k -
  1,2}(\ellip_{\thetastar}\cap \Ball(a\epsilon)) \geq b\epsilon
\Big\}$, and
\begin{align*}   
\epscritb(\thetastar ; \ellip) & \defn \sup \Big \{ \epsilon \mid b^2
\epsilon \leq \frac{1}{4} \sigma^2 \frac{\sqrt{\newkb(\epsilon,
    \thetastar,\ellip)}}{\epsilon} \Big \}.
\end{align*}
With this notation, an equivalent statement of \autoref{LemBernstein}
is that $\inf_{\psi} \UNIERR(\psi; \thetastar, \delta) \geq
\frac{1}{2}$ for all $\delta \leq b\epscritb(\thetastar ; \ellip)$.
(In this statement, we have replaced $b_{k -
  1,2}(\ellip_{\thetastar})$ by $b_{k - 1,2}(\ellip_{\thetastar} \cap
\Ball(a \epsilon))$ for later theoretical convenience; doing so does
not change the definition as $a> b$.)

In order to prove Theorem~\ref{ThmLowerBound}, we need to show that
$\ccon \epscritl(\thetastar; \ellip)\leq b\epscritb(\thetastar ;
\ellip)$, and we claim that it suffices to show that $\lwk(\epsilon,
\thetastar, \ellip) \leq \newkb(\epsilon, \thetastar,
\ellip)$. Indeed, supposing that the latter inequality holds, the
definition of the lower critical radius in
equation~\eqref{EqnRadCritL} implies that
\begin{align*}
  \epscritl(\thetastar; \ellip) & \defn \sup \Big \{ \epsilon \mid
  \epsilon \leq \frac{1}{4}
  \sigma^2 \frac{\sqrt{\lwk(\epsilon, \thetastar, \ellip)}}{\epsilon} \Big \}.
\end{align*}
Comparing with the above definition of $\epscritb(\thetastar ;
\ellip)$ and using\footnote{ It should also be noted that the right
  hand sides of both fixed point equations are non-increasing function
  of $\epsilon$ while the left hand side is non-decreasing.}  the fact
that $b \in (0,1)$, we see that $\epscritl(\thetastar; \ellip) \leq
\epscritb(\thetastar ; \ellip)$.  Finally, since $c < b$, it follows
that $\ccon \epscritl(\thetastar; \ellip)\leq b\epscritb(\thetastar ;
\ellip)$.

Now it is only left for us to show that $\lwk(\epsilon, \thetastar,
\ellip) \leq \newkb(\epsilon, \thetastar, \ellip)$.  By definition of
the critical lower dimension $\lwk(\epsilon, \thetastar, \ellip)$ in
equation~\eqref{EqnK-lower}, we have
$\kwidth_{\lwk-1}(\ellip_{\thetastar} \cap \Ball(a\epsilon)) > 3b
\epsilon$.  So our proof for Case 1 will be complete once we establish
that $b_{\lwk,2}(\ellip_{\thetastar}\cap \Ball(a\epsilon)) \geq
b\epsilon$.  In order to do so, we use the following lemma.
\begin{lems}
\label{LemChain}
The following chain of inequalities hold
\begin{align}
\label{EqnByEndOfToday}
\min\{a\epsilon, \frac{1}{2}\sqrt{\mu_{k+1}}\} \,
\stackrel{(i)}{\leq}\, b_{k,2}(\ellip_{\thetastar}\cap
\Ball(a\epsilon)) \leq \kwidth_{k}(\ellip_{\thetastar}\cap
\Ball(a\epsilon)) \, \stackrel{(ii)}{\leq}\, \frac{3}{2}
\kwidth_{k}(\ellip\cap \Ball(a\epsilon)) \, \stackrel{(iii)}{=}\,
\frac{3}{2} \min\{a\epsilon, \sqrt{\mu_{k+1}}\}.
\end{align}
\end{lems}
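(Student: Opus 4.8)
The plan is to verify the four relations comprising the chain~\eqref{EqnByEndOfToday} one at a time, exploiting the axis-aligned structure of $\ellip$ together with the ordering $\mu_1 \geq \cdots \geq \mu_\usedim$ (with the convention $\mu_{\usedim+1} \defn 0$) and the standing assumption $\enorm{\thetastar} \leq 1/2$ of Case~1, i.e. $\sum_i (\thetastar_i)^2/\mu_i \leq 1/4$. The workhorse subspaces are the spans of the top coordinate axes $e_1, \dots, e_k$ (resp. $e_1,\dots,e_{k+1}$), since the monotonicity of $\{\mu_j\}$ makes the leading coordinates the cheapest directions of the ellipse. The two elementary facts I will use repeatedly are: for $i > k$ one has $\mu_i \leq \mu_{k+1}$, hence $\sum_{i>k} v_i^2 \leq \mu_{k+1}\,\enorm{v}^2$ for every $v$; and for $v$ supported on the first $k+1$ coordinates, $\mu_i \geq \mu_{k+1}$ gives $\enorm{v}^2 \leq \|v\|_2^2/\mu_{k+1}$.

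I would begin with the equality (iii), namely $\kwidth_k(\ellip\cap\Ball(a\epsilon)) = \min\{a\epsilon,\sqrt{\mu_{k+1}}\}$. For the upper bound, take $\trunc{k}$ to be the coordinate projection onto $\mathrm{span}\{e_1,\dots,e_k\}$; for any $\theta\in\ellip\cap\Ball(a\epsilon)$ one has $\|\theta-\trunc{k}\theta\|_2^2 = \sum_{i>k}\theta_i^2 \leq \min\{\mu_{k+1},\,a^2\epsilon^2\}$, the first bound from the ellipse constraint and the second from the ball. For the matching lower bound I would inscribe a Euclidean ball of radius $\min\{a\epsilon,\sqrt{\mu_{k+1}}\}$ in the $(k+1)$-dimensional subspace $\mathrm{span}\{e_1,\dots,e_{k+1}\}$ — it lies in $\ellip\cap\Ball(a\epsilon)$ since its worst direction is $e_{k+1}$ — so $b_{k,2}(\ellip\cap\Ball(a\epsilon)) \geq \min\{a\epsilon,\sqrt{\mu_{k+1}}\}$, and inequality~\eqref{EqnCompr} transfers this to $\kwidth_k$. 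Next I would prove (ii) by testing the same projection $\trunc{k}$ against a generic $\Delta = \theta - \thetastar$ with $\theta\in\ellip$ and $\|\Delta\|_2\leq a\epsilon$: the ball gives $\|\Delta-\trunc{k}\Delta\|_2 = (\sum_{i>k}\Delta_i^2)^{1/2} \leq a\epsilon$, while the $\ell_2$ triangle inequality on the tail coordinates together with the elementary fact gives $(\sum_{i>k}\Delta_i^2)^{1/2} \leq (\sum_{i>k}\theta_i^2)^{1/2} + (\sum_{i>k}\thetastar_i^2)^{1/2} \leq \sqrt{\mu_{k+1}}\,(\enorm{\theta}+\enorm{\thetastar}) \leq \tfrac32\sqrt{\mu_{k+1}}$, using $\enorm{\theta}\leq 1$ and $\enorm{\thetastar}\leq\tfrac12$. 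Hence $\kwidth_k(\ellip_{\thetastar}\cap\Ball(a\epsilon)) \leq \min\{a\epsilon,\tfrac32\sqrt{\mu_{k+1}}\} \leq \tfrac32\min\{a\epsilon,\sqrt{\mu_{k+1}}\}$, which is $\tfrac32\kwidth_k(\ellip\cap\Ball(a\epsilon))$ by (iii). The remaining middle inequality is immediate from~\eqref{EqnCompr} applied to $C = \ellip_{\thetastar}\cap\Ball(a\epsilon)$, and (i) follows by inscribing a $(k+1)$-dimensional ball of radius $r\defn\min\{a\epsilon,\tfrac12\sqrt{\mu_{k+1}}\}$ in $\mathrm{span}\{e_1,\dots,e_{k+1}\}$: it lies in $\Ball(a\epsilon)$ since $r\leq a\epsilon$, and in $\ellip_{\thetastar}$ since for $\|v\|_2\leq r$ one has $\enorm{v+\thetastar} \leq \enorm{v}+\enorm{\thetastar} \leq \sqrt{r^2/\mu_{k+1}} + \tfrac12 \leq 1$; therefore $b_{k,2}(\ellip_{\thetastar}\cap\Ball(a\epsilon)) \geq r$.

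The one genuinely delicate point is (ii): a crude inclusion such as $\ellip_{\thetastar}\subseteq 2\ellip$ (or simply bounding the tail of $\Delta$ by $2\sqrt{\mu_{k+1}}$) would only yield a factor of $2$ rather than $3/2$, so it is essential to split the tail $\ell_2$-norm of $\Delta$ into the separate contributions of $\theta$ and of $\thetastar$ and to invoke the Case~1 hypothesis $\enorm{\thetastar}\leq\tfrac12$ to sharpen the split from $1+1$ to $1+\tfrac12$; the constant $\tfrac32$ in (ii) and the constant $\tfrac12$ in (i) are exactly what this accounting produces. Everything else reduces to routine verification of ball memberships using the monotonicity of $\{\mu_j\}$.
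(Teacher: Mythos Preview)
Your proof is correct and follows essentially the same approach as the paper: both use the coordinate projection onto $\mathrm{span}\{e_1,\dots,e_k\}$ for the Kolmogorov upper bounds and inscribe a $(k{+}1)$-dimensional ball in $\mathrm{span}\{e_1,\dots,e_{k+1}\}$ for the Bernstein lower bounds, leveraging the Case~1 hypothesis $\enorm{\thetastar}\leq 1/2$ to obtain the constants $3/2$ and $1/2$. The only cosmetic differences are that for (ii) the paper first bounds $\enorm{\Delta}\leq\enorm{\theta}+\enorm{\thetastar}\leq 3/2$ in the ellipse norm and then passes to the tail, whereas you split the tail in $\ell_2$ first and then bound each piece; and for (i) the paper uses the convexity argument $\thetastar+v/2=\tfrac12(2\thetastar)+\tfrac12 v\in\ellip$ in place of your direct triangle-inequality computation $\enorm{v+\thetastar}\leq\enorm{v}+\enorm{\thetastar}\leq 1$.
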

\noindent See Appendix~\ref{AppLemChain} for the proof of this claim.

From Lemma~\ref{LemChain}, we have the string of inequalities
\begin{align*}
  3 b\epsilon < \kwidth_{\lwk-1}(\ellip_{\thetastar} \cap
  \Ball(a\epsilon)) \leq \frac{3}{2} \min\{a\epsilon,
  \sqrt{\mu_{\lwk}} \},
\end{align*}
and hence $b_{\lwk,2}(\ellip_{\thetastar}\cap \Ball(a\epsilon))\geq
\min\{a\epsilon, \frac{1}{2}\sqrt{\mu_{\lwk}}\} \geq b\epsilon$,
which completes the proof in Case 1.


\paragraph{Case 2: }  Otherwise, we may assume that $\enorm{\at} > 1/2$,
in which case $\Rfun(\delta/\ccon) \leq (\enorm{\at}-1)^2 < 1$, and
hence by definition of the function $\Rfun$, we have $\delta <
\ccon\ltwo{\at}/a$.

Our analysis in this case makes use of the following auxiliary result
(stated as Lemma 6 in Wei et al.~\cite{wei2017geometry}).  Suppose
that we observe a Gaussian random vector $y \sim N(\Delta, \sigma^2
I_{\usedim})$.  For a set $C$, we use $\UNIERR(\psi; \{0\}, C,
\delta)$ to denote the uniform testing error defined by the compound
testing problem (in the Gaussian sequence model) of distinguishing the
null $\Delta = 0$ from the compound alternative $\Delta \in C 
\cap
\Ball^c(\delta)$, where $\Ball^c(\delta) \defn \{v \mid \ltwo{v}\geq
\delta\}$.

\begin{lems} 
\label{LemChisquareBound}
For every set $C$ and probability measure $\qprob$ supported on $C
\cap \Ball^c(\delta)$, we have
\begin{align}
\label{EqnTestErrorLowerBound}
\inf_{\psi} \UNIERR(\psi; \{0\}, C, \delta) \geq 1 - \frac{1}{2}
\sqrt{\Exs_{\eta, \eta'} \, \exp \Big(\frac{\inprod{\eta}{\eta'}}{\sigma^2} \Big)-1},
\end{align} 
where $\Exs_{\eta, \eta'}$ denotes expectation with respect to an
i.i.d pair $\eta, \eta' \sim \qprob$.
\end{lems}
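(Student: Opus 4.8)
The plan is to pass from the compound alternative to a Bayes testing problem against the mixture induced by $\qprob$, and then bound the resulting total variation distance by a second-moment ($\chi^2$-type) quantity. Throughout, write $\Prob_\Delta$ for the $\NORMAL(\Delta, \sigma^2 I_\usedim)$ distribution and let $\Prob_\qprob \defn \int \Prob_\Delta\, d\qprob(\Delta)$ denote the mixture (marginal) distribution; by hypothesis $\qprob$ is supported on $C \cap \Ball^c(\delta)$. The first step is to lower bound the worst-case type II error by its $\qprob$-average: for any measurable test $\psi$,
\begin{align*}
\UNIERR(\psi; \{0\}, C, \delta) = \Exs_0[\psi(y)] + \sup_{\Delta \in C \cap \Ball^c(\delta)} \Exs_\Delta[1-\psi(y)] \;\geq\; \Exs_0[\psi(y)] + \Exs_{\Delta \sim \qprob}\Exs_\Delta[1-\psi(y)].
\end{align*}
The right-hand side is precisely the sum of the type I and type II errors of $\psi$ for the simple-versus-simple test of $\Prob_0$ against $\Prob_\qprob$, and by the Neyman--Pearson lemma its infimum over all $\psi$ equals $1 - \tvnorm{\Prob_0 - \Prob_\qprob}$. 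Hence $\inf_\psi \UNIERR(\psi; \{0\}, C, \delta) \geq 1 - \tvnorm{\Prob_0 - \Prob_\qprob}$.

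Next I would convert this into the stated $\chi^2$ form. Since $\Prob_\qprob \ll \Prob_0$, Cauchy--Schwarz gives $\tvnorm{\Prob_0 - \Prob_\qprob} = \frac12 \int |\, d\Prob_\qprob/d\Prob_0 - 1\,|\, d\Prob_0 \leq \frac12 \big( \Exs_{\Prob_0}[(d\Prob_\qprob/d\Prob_0)^2] - 1\big)^{1/2}$, so it only remains to evaluate the second moment of the likelihood ratio. Using the explicit Gaussian likelihood ratio $\tfrac{d\Prob_\Delta}{d\Prob_0}(y) = \exp\!\big( \inprod{y}{\Delta}/\sigma^2 - \ltwo{\Delta}^2/(2\sigma^2)\big)$ and writing $\tfrac{d\Prob_\qprob}{d\Prob_0}(y) = \int \tfrac{d\Prob_\Delta}{d\Prob_0}(y)\, d\qprob(\Delta)$, I would square, expand into a double integral over an i.i.d.\ pair $(\eta,\eta') \sim \qprob$, and interchange $\Exs_{\Prob_0}$ with the nonnegative integrand by Tonelli's theorem. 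The inner expectation is a Gaussian moment generating function: for $y \sim \NORMAL(0,\sigma^2 I_\usedim)$ one has $\Exs\!\big[\exp(\inprod{y}{(\eta+\eta')/\sigma^2})\big] = \exp(\ltwo{\eta+\eta'}^2/(2\sigma^2))$, and combining this with the factors $\exp(-\ltwo{\eta}^2/(2\sigma^2) - \ltwo{\eta'}^2/(2\sigma^2))$ collapses the exponent to $\inprod{\eta}{\eta'}/\sigma^2$. This gives $\Exs_{\Prob_0}[(d\Prob_\qprob/d\Prob_0)^2] = \Exs_{\eta,\eta'}\exp(\inprod{\eta}{\eta'}/\sigma^2)$, equivalently $\chi^2(\Prob_\qprob \,\|\, \Prob_0) = \Exs_{\eta,\eta'}\exp(\inprod{\eta}{\eta'}/\sigma^2) - 1$, and substituting into the previous displays yields the bound~\eqref{EqnTestErrorLowerBound}.

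I do not expect any serious obstacle: this is the classical Ingster--Suslina $\chi^2$ argument. The only points that warrant care are (i) verifying that $\qprob$ is genuinely supported inside the alternative set $C \cap \Ball^c(\delta)$, so that the averaging step above is legitimate, and (ii) justifying the interchange of expectation and integral and the moment generating function evaluation --- both immediate, since all integrands are nonnegative and the Gaussian has finite exponential moments of every order.
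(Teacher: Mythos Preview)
Your proof is correct and is exactly the classical Ingster--Suslina $\chi^2$ argument; the paper itself does not prove this lemma but simply cites it as Lemma~6 of Wei et al.~\cite{wei2017geometry}. There is nothing further to compare.
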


In order to apply Lemma~\ref{LemChisquareBound}, we need to specify a
suitable set $C$ and probability distribution $\qprob$.  For our
\mbox{application,} the relevant set is $C = \Ellipse_\thetastar$.  In
addition, we need to construct an appropriate distribution $\qprob$
with which to apply Lemma~\ref{LemChisquareBound}. To this end, we
introduce two more auxiliary results.  In the following statement, we
fix a constant $\epsilon > 0$, as well as pair $a,b \in (0,1)$ such
that $a > 3b$, and recall that $\lwk(\epsilon)$ denotes the lower
critical dimension from equation~\eqref{EqnK-lower}.  Also recall that
the ellipse norm $\enorm{\theta}^2 = \theta^T \NewMat \theta$ is
defined in terms of the diagonal matrix $\NewMat$ with diagonal
entries $\{\frac{1}{\mu_i}\}_{i=1}^\usedim$.

\begin{lems}
\label{LemPacking}
For any vector $\thetastar \in \ellip$ such that
$\norm{\thetastar}_2 > a \epsilon$, there exists a vector $\thetadag
\in \mathcal{E}$, a collection of $\usedim$-dimensional orthonormal
vectors $\{u_i \}_{i=1}^{\lwk}$ and an upper triangular matrix of the
form
\begin{align}
 \Mat \defn
 \begin{bmatrix}
   \label{EqnDefnMat}
1 & h_{3,2} & h_{4,2} & \cdots & h_{\lwk,2}\\ & 1 & h_{4,3} & \cdots &
h_{\lwk,3}\\ &&1 & \cdots & h_{\lwk,4} \\ &&&\ddots & \vdots \\ &&&&1
    \end{bmatrix} \in \real^{(\lwk-1) \times (\lwk -1)},
\end{align}
with ordered singular values $\nu_1(\Mat) \geq \nu_2(\Mat) \geq \cdots
\geq \nu_{\lwk-1}(\Mat) \geq 0$ such that:
\begin{enumerate}
\item[(a)] The vectors $u_1$, $\NewMat \thetadag$, and $\thetadag -
  \thetastar$ are all scalar multiples of one another.
\item[(b)] We have $\norm{\thetadag - \thetastar}_2 = a \epsilon$.
\item[(c)] For each $i \in [\lwk-1]$, the $i^{th}$ column $\Mat_{i,
  \cdot}$ of the matrix $\Mat$ has its squared Euclidean norm upper
  bounded as \mbox{$\norm{\Mat_{\cdot, i}}_2^2 \leq 
    a^2/9b^2$.}
\item[(d)] For each $i \in [\lwk - 1]$, each of the vectors $\thetadag
  \pm b \epsilon 
  \underbrace{\begin{bmatrix} u_2 & \cdots
    u_{\lwk}
  \end{bmatrix}}_{\defn U} H_{\cdot, i}$ belongs to the ellipse $\Ellipse$.
\item[(e)] For each pair of integers $(s,t) \in [\lwk-1] \times
  [\lwk-2]$, we have
  \begin{align}
    \label{EqnEignBound}
\nu_{s} \stackrel{(i)}{\leq} \frac{a}{3b}\sqrt{\frac{\lwk-1}{s}},
~~\text{ and }~~ \nu_{t+1} \stackrel{(ii)}{\geq} 1 - \frac{t}{\lwk-1}
- \sqrt{\frac{a^2 - 9b^2}{9b^2}}.
\end{align}
\end{enumerate}
\end{lems}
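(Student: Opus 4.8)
The plan is to take $\thetadag$ to be a coordinatewise shrinkage of $\thetastar$, to set $u_1$ proportional to the ellipse normal $\NewMat\thetadag$ at that point, and to build $u_2,\dots,u_{\lwk}$ together with the triangular matrix $\Mat$ by a Gram--Schmidt recursion carried out inside the tangent hyperplane $(\NewMat\thetadag)^{\perp}$. Concretely, I would set $\thetadag_i \defn \frac{\mu_i}{\mu_i+r}\,\thetastar_i$ and choose the shrinkage parameter $r\ge 0$ so that $\ltwo{\thetadag-\thetastar}=a\epsilon$; such an $r$ exists because $\phi(r)\defn\sum_i \frac{r^2}{(\mu_i+r)^2}(\thetastar_i)^2$ is continuous, nondecreasing, vanishes at $r=0$, and tends to $\ltwo{\thetastar}^2>a^2\epsilon^2$ as $r\to\infty$ (here we use the hypothesis $\ltwo{\thetastar}>a\epsilon$). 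A direct computation gives the identity $\thetadag-\thetastar=-r\,\NewMat\thetadag$, so taking $u_1\defn \NewMat\thetadag/\ltwo{\NewMat\thetadag}$ settles property~(a), while property~(b) holds by the choice of $r$. The same formula yields $\enorm{\thetadag}^2=\sum_i \frac{\mu_i}{(\mu_i+r)^2}(\thetastar_i)^2\le\enorm{\thetastar}^2\le 1$, so $\thetadag\in\Ellipse$; moreover, comparing $\enorm{\thetastar}^2-\enorm{\thetadag}^2$ term by term with $\ltwo{\thetadag-\thetastar}^2=a^2\epsilon^2$ shows that the ``budget'' obeys $1-\enorm{\thetadag}^2\ \ge\ \tfrac{2a^2\epsilon^2}{r}$, an estimate I will use for feasibility in~(d).

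For the frame, I would work in the tangent hyperplane $T\defn(\NewMat\thetadag)^{\perp}$. The key structural fact is that for any $v\in T$ the cross term $v^{\top}\NewMat\thetadag$ vanishes, so $\enorm{\thetadag\pm b\epsilon v}^2=\enorm{\thetadag}^2+b^2\epsilon^2\enorm{v}^2$; consequently property~(d) is \emph{equivalent} to the bound $\enorm{U\Mat_{\cdot,i}}^2\le (1-\enorm{\thetadag}^2)/(b^2\epsilon^2)$ for every column $i$. Thus the task reduces to exhibiting a $(\lwk-1)$-dimensional subspace of $T$ that is ``thin'' in the $\enorm{\cdot}$-geometry and admits an orthonormal basis $u_2,\dots,u_{\lwk}$ in which a chosen family of $\lwk-1$ generating directions is upper triangular with unit diagonal. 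I would obtain such a subspace adaptively: order the coordinate axes by nonincreasing $\mu_i$, retain the axes $e_j$ whose weight satisfies $\mu_j\gtrsim b^2\epsilon^2/(1-\enorm{\thetadag}^2)$, project those onto $T$, and run Gram--Schmidt on the projections (in order) to produce $u_2,\dots,u_{\lwk}$; rescaling each projected axis so that its leading coefficient in the $u$-basis equals $1$ defines the generating directions and the matrix $\Mat=I+N$ with $N$ strictly upper triangular, which has exactly the displayed shape~\eqref{EqnDefnMat}. That this retained coordinate set has at least $\lwk-1$ elements, and that the resulting $\enorm{\cdot}$-norms and Gram--Schmidt coefficients are controlled, is precisely what the definition of the lower critical dimension $\lwk$ delivers: the defining inequality $\kwidth_{\lwk-1}(\Ellipse_{\thetastar}\cap\Ball(a\epsilon))>3b\epsilon$, combined with the width comparison in Lemma~\ref{LemChain}, forces $\mu_{\lwk}>4b^2\epsilon^2$ and, more sharply, matches the count of ``fat'' tangent directions to $\lwk-1$. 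Combining this with the budget bound $1-\enorm{\thetadag}^2\ge 2a^2\epsilon^2/r$ from the first step yields property~(d) and the column-norm bound~(c): $\norm{\Mat_{\cdot,i}}_2^2\le a^2/(9b^2)$.

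Property~(e) then follows from~(c) by elementary matrix inequalities. Summing~(c) over columns gives $\sum_s\nu_s^2=\norm{\Mat}_F^2\le(\lwk-1)\,a^2/(9b^2)$, so $s\,\nu_s^2\le\sum_{j\le s}\nu_j^2\le\norm{\Mat}_F^2$ immediately yields bound~(i), namely $\nu_s\le\frac{a}{3b}\sqrt{(\lwk-1)/s}$. For bound~(ii), write $\Mat=I+N$ with $N$ strictly upper triangular; then $\norm{N}_F^2=\norm{\Mat}_F^2-(\lwk-1)\le(\lwk-1)(a^2-9b^2)/(9b^2)$, and Weyl's inequality for singular values gives $\nu_{t+1}(\Mat)\ge 1-\nu_{\lwk-1-t}(N)\ge 1-c_0\sqrt{(\lwk-1)/(\lwk-1-t)}$ with $c_0\defn\sqrt{(a^2-9b^2)/(9b^2)}$. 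Writing $x\defn t/(\lwk-1)$, the desired inequality $1-c_0\sqrt{1/(1-x)}\ge 1-x-c_0$ is nontrivial only when $x<1-c_0$, and there it reduces to $c_0\big((1-x)^{-1/2}-1\big)\le x$; since the left side is convex in $x$ and agrees with $x$ at $x=0$ while lying below it at $x=1-c_0$ (the latter amounting to $c_0<1$, which holds for the admissible constants), it lies below $x$ throughout the interval, giving~(e)(ii).

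The main obstacle is the frame construction of the second paragraph: one must reconcile three competing demands --- feasibility~(d), which lives in the $\NewMat$-metric; the Euclidean column-norm bound~(c); and the rigid upper-triangular-with-unit-diagonal form of $\Mat$ --- and the reconciliation hinges on choosing the generating directions so that the off-diagonal corrections accumulated by Gram--Schmidt (the entries of $N$) stay uniformly small. Bookkeeping how these corrections build up, and confirming that the thin tangent subspace of dimension exactly $\lwk-1$ really exists (which is where the definition of $\lwk$ via the localized Kolmogorov width, together with Lemma~\ref{LemChain}, is indispensable), is the delicate heart of the argument; everything else is routine.
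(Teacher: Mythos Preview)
Your construction of $\thetadag$ via coordinatewise shrinkage and your verification of (a) and (b) match the paper exactly. Your argument for property~(e) is also correct; in fact your route to (e)(ii) via Weyl's inequality $\nu_{t+1}(I+N)\ge 1-\nu_{\lwk-1-t}(N)$ together with the Frobenius bound on $N$ is arguably cleaner than the paper's, which instead uses a pigeonhole argument on the rows of $\Mat^T$.

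The gap is in your construction of the frame $u_2,\dots,u_{\lwk}$ and the matrix $\Mat$. Three issues. First, you invoke Lemma~\ref{LemChain} to count the retained coordinate axes, but that lemma is proved under the standing assumption $\enorm{\thetastar}\le 1/2$, whereas Lemma~\ref{LemPacking} is invoked precisely in Case~2 of Theorem~\ref{ThmLowerBound}, where $\enorm{\thetastar}>1/2$; so that appeal is unavailable. Second, and more seriously, your coordinate-axis-plus-Gram--Schmidt scheme supplies no mechanism for the precise column bound $\|\Mat_{\cdot,i}\|_2^2\le a^2/(9b^2)$: the off-diagonal Gram--Schmidt coefficients among the projected axes $\tilde e_j=e_j-\langle e_j,u_1\rangle u_1$ depend on the direction $u_1\propto \NewMat\thetadag$, which varies with $\thetastar$ in a way you have not controlled. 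The constant $a^2/(9b^2)$ is not incidental---it is exactly what makes (e)(ii) informative---and nothing in your sketch forces the strictly upper-triangular part $N$ to satisfy $\|N\|_F^2\le(\lwk-1)(a^2-9b^2)/(9b^2)$. Third, your feasibility route for (d) through the budget $1-\enorm{\thetadag}^2\ge 2a^2\epsilon^2/r$ is hostage to $r$, which can be arbitrarily large depending on $\thetastar$.

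The paper avoids all of this by drawing the generating directions directly from the set $\Ellipse_{\thetastar}\cap \Ball(a\epsilon)$, using the very definition of $\lwk$. Since $\kwidth_{i-1}(\Ellipse_{\thetastar}\cap \Ball(a\epsilon))>3b\epsilon$ for every $i\le\lwk$, there exists $\Delta_i\in\Ellipse_{\thetastar}\cap \Ball(a\epsilon)$ whose residual $v_i$ after projecting onto $\mathrm{span}(v_1,\dots,v_{i-1})$ has $\ltwo{v_i}\ge 3b\epsilon$; one then sets $u_i=v_i/\ltwo{v_i}$ and writes the partial residual $v_i'\defn\Delta_i-\Pi_{v_1}\Delta_i$ in the basis $v_2,\dots,v_i$ to read off the column of $\Mat$. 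Now (c) is automatic, since $\ltwo{v_i'-v_i}^2\le\ltwo{\Delta_i}^2-\ltwo{v_i}^2\le(a^2-9b^2)\epsilon^2$ while each $\ltwo{v_j}\ge 3b\epsilon$. And (d) comes not from a budget inequality but from \emph{convexity of $\Ellipse$}: since $\thetastar+\Delta_i\in\Ellipse$ and (one checks) $\thetadag-v_1\in\Ellipse$, an intercept-theorem argument along the segment joining them lands $\thetadag+\tfrac13 v_i'$ inside $\Ellipse$. The key idea you are missing is that the generating directions must themselves be extracted from $\Ellipse_{\thetastar}\cap \Ball(a\epsilon)$; this simultaneously delivers the ellipse membership needed for (d) and the ratio $a\epsilon:3b\epsilon$ that produces the constant in (c).
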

\noindent See Appendix~\ref{AppLemPacking} for the proof of this lemma.

Let us write the SVD of the matrix $\Mat$ from
equation~\eqref{EqnDefnMat} in the form $Q \Sigma V^T$.  Since $\Mat$
is square and full rank, the matrices $Q$ and $V$ are square and
orthogonal, and $\Sigma$ is a square matrix of singular values.  By
construction, we have $\Mat^T \Mat = V \Sigma^2 V^T$.  
Define matrix $\AMat \defn V^T(UH)^T \NewMat (UH) V$ and write its diagonal 
as $(A_1,\ldots,A_{\lwk-1}).$
We introduce index set $\Mdex(r) \defn \{i \mid A_i \leq A_{(r)}\}$
which contains those elements that lie below the $r$-th quantile of $\AMat$'s diagonals elements.
We assume that $(\lwk-1)/8$ is an integer; otherwise, the same argument can be applied by taking its floor function.  
Let $m_1 \defn (\lwk-1)/4$ and $m_2 \defn (\lwk-1)/2$.
Fixing a sparsity level $s \leq m_2-m_1+1$, let $\mathcal{S}$ be the set 
of all subsets of 
set $F \defn \{m_1,m_1+1, \ldots,m_2\} \cap \Mdex(7(\lwk-1)/8) $ of size $s$.  
Note that from definitions, the cardinality of $F$ is lower bounded by $(\lwk-1)/8.$

Given a subset $S \in \mathcal{S}$
and a sign vector $z^S = (z^S_1,\ldots,z^S_{\lwk-1}) \in
\{-1,0,1\}^{\lwk-1}$, define the vector
\begin{align}
\label{EqnPerturb}
 \theta^S \defn \thetastar + b \epsilon\frac{1}{4\sqrt{2s}} 
 X V z^S, \qquad \Delta^S \defn \theta^S - \thetastar,
\end{align}
where the matrix $X = U\Mat$ was previously defined in
\autoref{LemPacking}. We then make the following claim:
\begin{lems}
\label{LemHanaSleep}
Under the assumption of Lemma~\ref{LemPacking} and $\Rfun(\epsilon)
\leq (\enorm{\at}-1)^2$, for any subset $S \in \mathcal{S}$, there is
a sign vector $z^S$ supported on $S$ such that:
  \begin{align}
 \label{Eqncond1}
 \theta^S(z^S) &  \; \stackrel{(i)}{\in} \; \ellip, \qquad \text{and }~~ 
\ltwo{\Delta^S} \;\stackrel{(ii)}{\geq} \; \ccon\epsilon.
\end{align}
\end{lems}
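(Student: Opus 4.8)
The plan is to prove the two assertions of the lemma separately, handling claim (ii) first because it turns out to be insensitive to the choice of signs, and then constructing the signs so that claim (i) also holds. For (ii), I would begin from the identity $\ltwo{\Delta^S}^2 = \tfrac{b^2\epsilon^2}{32 s}(z^S)^\top V^\top \Mat^\top U^\top U \Mat V z^S$. Since $U$ has orthonormal columns and $\Mat = Q\Sigma V^\top$ is the SVD, this collapses to $\tfrac{b^2\epsilon^2}{32 s}\sum_{i\in S}\nu_i(\Mat)^2$ whenever $z^S$ has unit–magnitude entries on $S$. Because $S\subseteq F\subseteq\{m_1,\dots,m_2\}$ with $m_2=(\lwk-1)/2$, every index in $S$ is at most $m_2$, so each $\nu_i(\Mat)\ge\nu_{m_2}(\Mat)$, and part (e)(ii) of Lemma~\ref{LemPacking} applied with $t+1=m_2$ gives $\nu_{m_2}(\Mat)\ge \tfrac12-\tfrac{1}{3b}\sqrt{a^2-9b^2}$. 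Substituting and simplifying yields exactly $\ltwo{\Delta^S}\ge\ccon\epsilon$ by the definition~\eqref{EqnFrog} of $\ccon$. This works for \emph{any} sign vector supported on $S$, so it imposes no constraint on the remaining freedom.

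For (i) I would expand $\enorm{\theta^S}^2=\enorm{\thetastar}^2+2\langle\NewMat\thetastar,\Delta^S\rangle+\langle\Delta^S,\NewMat\Delta^S\rangle$ and exploit the structure of Lemma~\ref{LemPacking}. Since $\Delta^S$ lies in the column span of $U=[u_2\ \cdots\ u_{\lwk}]$ and $\NewMat\thetadag$ is parallel to $u_1$ by part (a), we have $\langle\NewMat\thetadag,\Delta^S\rangle=0$; writing $\thetastar=\thetadag-(\thetadag-\thetastar)$ and using that $\thetadag-\thetastar$ is also parallel to $u_1$ (hence to $\NewMat\thetadag$), the cross term collapses to $\langle\NewMat\thetastar,\Delta^S\rangle = r\,\langle\NewMat^2\thetadag,\Delta^S\rangle$, where $r=\Rfun(\epsilon)$ is the shrinkage parameter characterized by $a^2\epsilon^2=\sum_i\tfrac{r^2}{(r+\mu_i)^2}(\thetastar_i)^2$ (the same identity pins down $\thetadag_i=\tfrac{\mu_i}{\mu_i+r}\thetastar_i$). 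The idea is then to choose $z^S$ so that this cross term is nonpositive while simultaneously controlling $\langle\Delta^S,\NewMat\Delta^S\rangle=\tfrac{b^2\epsilon^2}{32 s}(z^S)^\top A\,z^S$ with $A=V^\top(U\Mat)^\top\NewMat(U\Mat)V$.

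The sign selection is a short averaging-plus-pairing argument. Let $A'_S$ be the off-diagonal block of $A$ restricted to the coordinates in $S$; with independent Rademacher signs, $\mathbb{E}\big[(z)^\top A'_S z\big]=0$, so the set $\mathcal{Z}=\{z\in\{\pm1\}^S:(z)^\top A'_S z\le 0\}$ is nonempty, and it is symmetric under $z\mapsto -z$. The map $z\mapsto\langle\NewMat^2\thetadag,\Delta^S(z)\rangle$ is linear in $z$, hence odd under negation, so starting from any $z_0\in\mathcal Z$ we may replace it by $-z_0\in\mathcal Z$ if necessary to obtain a $z^S$ for which both $(z^S)^\top A'_S z^S\le 0$ and $r\langle\NewMat^2\thetadag,\Delta^S\rangle\le 0$. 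With such a $z^S$ fixed, $\enorm{\theta^S}^2\le\enorm{\thetastar}^2+\tfrac{b^2\epsilon^2}{32 s}\sum_{i\in S}A_{ii}$.

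The remaining step, and the one I expect to be the main obstacle, is to bound $\sum_{i\in S}A_{ii}$ and then reconcile it with the room $1-\enorm{\thetastar}^2$ truly available in the ellipse. Since $S\subseteq\Mdex(7(\lwk-1)/8)$, each $A_{ii}$ with $i\in S$ lies below the $\tfrac78$-quantile of the diagonal of $A$, so at least $(\lwk-1)/8$ of the diagonal entries dominate it and $A_{ii}\le\tfrac{8}{\lwk-1}\trace(A)$; moreover $\trace(A)=\sum_{i=1}^{\lwk-1}\enorm{(U\Mat)_{\cdot,i}}^2$, and part (d) of Lemma~\ref{LemPacking} together with $\langle\NewMat\thetadag,(U\Mat)_{\cdot,i}\rangle=0$ gives $\enorm{(U\Mat)_{\cdot,i}}^2\le(1-\enorm{\thetadag}^2)/(b^2\epsilon^2)$, whence $\langle\Delta^S,\NewMat\Delta^S\rangle\le(1-\enorm{\thetadag}^2)/4$. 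It then remains to show $\tfrac14(1-\enorm{\thetadag}^2)\le 1-\enorm{\thetastar}^2$. This is where the hypothesis $\Rfun(\epsilon)\le(\enorm{\thetastar}-1)^2$ is used: via the identities $\enorm{\thetadag}^2=\sum_i\tfrac{(\thetastar_i)^2}{\mu_i+r}-\tfrac{a^2\epsilon^2}{r}$ and $a^2\epsilon^2=r^2\sum_i\tfrac{(\thetastar_i)^2}{(\mu_i+r)^2}$ one shows that the gap $\enorm{\thetastar}^2-\enorm{\thetadag}^2$ is controlled by $r$, and $r\le(1-\enorm{\thetastar})^2\le 1-\enorm{\thetastar}^2$ then forces this gap to be at most $3(1-\enorm{\thetastar}^2)$ (in the regime $\enorm{\thetastar}^2\le 3/4$ the inequality is automatic, so only the near-boundary case needs the quantitative input of $\Rfun$). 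Assembling, $\enorm{\theta^S}^2\le\enorm{\thetastar}^2+(1-\enorm{\thetastar}^2)=1$, which is (i). The delicate bookkeeping lies in this last reconciliation — carrying the constants $a,b$ through the shrinkage identities — and in verifying that the cardinality bound $|F|\ge(\lwk-1)/8$ stated just before the lemma is exactly what makes the quantile step produce the factor $8$.
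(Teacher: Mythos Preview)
Your treatment of (ii) is essentially the paper's: both compute $\ltwo{\Delta^S}^2=\tfrac{b^2\epsilon^2}{32s}\|\Sigma z^S\|_2^2=\tfrac{b^2\epsilon^2}{32s}\sum_{i\in S}\nu_i^2$, invoke $\nu_i\ge\nu_{m_2}$, and plug in part~(e)(ii) of Lemma~\ref{LemPacking} to recover exactly the constant $\ccon$ from~\eqref{EqnFrog}.

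For (i) you take a genuinely different route. The paper does \emph{not} expand $\enorm{\theta^S}^2$ around $\thetastar$. Instead it introduces the auxiliary point $\tilde\theta^S\defn\thetadag+\tfrac{b\epsilon}{2\sqrt{2s}}XVz^S$ centered at $\thetadag$, so the cross term $\langle\NewMat\thetadag,XVz^S\rangle$ vanishes identically and a pure averaging argument (over Rademacher $z^S$) gives $\Exs\enorm{\tilde\theta^S}^2\le\enorm{\thetadag+b\epsilon x_j}^2\le 1$ via property~(d); hence \emph{some} sign choice puts $\tilde\theta^S\in\ellip$. Separately it shows $2\thetastar-\thetadag\in\ellip$ using $\enorm{v_1}\le\tfrac{\sqrt r}{2}\enorm{\thetastar}$ and the $\Rfun$ hypothesis. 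Since $\theta^S$ is the midpoint of $\tilde\theta^S$ and $2\thetastar-\thetadag$, convexity finishes. This decoupling avoids your simultaneous sign selection entirely and replaces your final norm inequality by a one-line convexity step.

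Your approach is valid in principle---the parity argument for choosing $z^S$ to kill both the off-diagonal quadratic and the linear cross term is correct---but the last reconciliation is only sketched, and as written it is not quite right. You assert that ``$r\le(1-\enorm{\thetastar})^2\le 1-\enorm{\thetastar}^2$ then forces this gap to be at most $3(1-\enorm{\thetastar}^2)$''; however the gap $\enorm{\thetastar}^2-\enorm{\thetadag}^2$ is not bounded by a constant times $r$ in general (small $\mu_i$ make individual summands large). The inequality you need, $\tfrac14(1-\enorm{\thetadag}^2)\le 1-\enorm{\thetastar}^2$, \emph{does} hold, but the clean way to see it is to use the same estimate the paper uses in its claim~(b): $\enorm{v_1}\le\tfrac{\sqrt r}{2}\enorm{\thetastar}$ together with the hypothesis $\sqrt r\le 1-\enorm{\thetastar}$ gives $\enorm{\thetadag}\ge\enorm{\thetastar}\big(1-\tfrac{1-\enorm{\thetastar}}{2}\big)=\tfrac{\enorm{\thetastar}(1+\enorm{\thetastar})}{2}$, and a short polynomial check in $p=\enorm{\thetastar}\in(1/2,1]$ then yields the desired bound. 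So your argument can be completed, but the bookkeeping you flagged as ``delicate'' is not optional: it is precisely the content of the paper's claim~(b), repackaged.
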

\noindent
See Appendix~\ref{AppLemHanaSleep} for the proof of this lemma.

We now apply Lemma~\ref{LemChisquareBound} with $\qprob$ as the
uniform distribution on those perturbation vectors $\Delta^S =
\theta^S - \thetastar$ that are defined via
expression~\eqref{EqnPerturb}.  Note that the sign vector $z^S$ is
taken to be the one whose existence is guaranteed by
Lemma~\ref{LemHanaSleep}.  For any $S \in \mathcal{S}$, due to
inequality~\eqref{Eqncond1}, we have $\ltwo{\Delta^S} \geq \epsilon$,
and thus, with the choice $\delta \defn \ccon \epsilon$, the
distribution $\qprob$ is supported on set $\ellip_{\thetastar} \cap
\Ball^c(\delta)$

For simplicity of the notation, let us introduce the shorthand
notation $\tmpdim \defn m_2-m_1+1$; notat that this integer satisfies
the relations $\tmpdim = \lwk/4 \geq 81$ and $\lambda \defn
\frac{1}{32\sigma^2} b^2 \epsilon^2 \nu^2_{m_1}$, where $\nu_{m_1}$
denotes the $m_1$-th largest eigenvalue of matrix $\Mat$.

\begin{lems}
  \label{LemPackingLB}
Given the distribution $\qprob$ defined as above, we have
\begin{align}
\label{EqnPackingLB}
\Exs_{\eta,\eta' \sim \qprob} \Big[ \exp
  \big(\frac{\inprod{\eta}{\eta'}}{\sigma^2} \big) \Big] & \leq \exp
\left(- \left(1-\frac{1}{\sqrt{\tmpdim}}\right)^2 + \exp
\left(\frac{2+\lambda}{\sqrt{\tmpdim}-1}\right) \right).
\end{align}
\end{lems}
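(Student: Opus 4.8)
The plan is to reduce the $\chi^2$-type quantity on the left-hand side of~\eqref{EqnPackingLB} to a purely combinatorial moment generating function for the overlap of two random subsets, and then to estimate that MGF. For two independent draws $\eta = \Delta^S$ and $\eta' = \Delta^{S'}$ from $\qprob$, I would first expand $\inprod{\eta}{\eta'}$. Recalling from~\eqref{EqnPerturb} that $\Delta^S = b\epsilon \tfrac{1}{4\sqrt{2s}} X V z^S$ with $X = U \Mat$, using that the columns of $U$ are orthonormal (so $U^\top U = I_{\lwk-1}$), and substituting the SVD $\Mat = Q \Sigma V^\top$ from Lemma~\ref{LemPacking}, one gets $X^\top X = \Mat^\top \Mat = V \Sigma^2 V^\top$ and hence the diagonal form
\begin{align*}
\inprod{\eta}{\eta'} \; = \; \frac{b^2 \epsilon^2}{32 s}\, (z^S)^\top \Sigma^2 z^{S'} \; = \; \frac{b^2 \epsilon^2}{32 s} \sum_{i \in S \cap S'} \nu_i^2 \, z^S_i z^{S'}_i.
\end{align*}
Since all the sign vectors are supported inside $F \subseteq \{m_1, \dots, m_2\}$, every index $i$ appearing here satisfies $i \geq m_1$, so monotonicity of the singular values gives $\nu_i \leq \nu_{m_1}$; bounding $z^S_i z^{S'}_i \leq 1$ and recalling the definition of $\lambda$ then yields $\inprod{\eta}{\eta'}/\sigma^2 \leq \tfrac{\lambda}{s} \lvert S \cap S' \rvert$. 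It remains to bound $\Exs_{S, S'}\big[ \exp(\tfrac{\lambda}{s} \lvert S \cap S' \rvert) \big]$, where $S$ and $S'$ are drawn independently and uniformly from the size-$s$ subsets of $F$. For this, I would note that, conditionally on $S'$, the overlap $\lvert S \cap S'\rvert$ is hypergeometric (its indicator summands negatively associated); equivalently one computes the factorial moments directly via $\Exs\big[\binom{\lvert S \cap S'\rvert}{j}\big] = \binom{s}{j}\prod_{l=0}^{j-1}\tfrac{s-l}{\lvert F\rvert - l} \leq \binom{s}{j}(s/\lvert F\rvert)^j$ and sums the identity $e^{\alpha x} = \sum_{j \geq 0}\binom{x}{j}(e^\alpha-1)^j$ (valid for nonnegative integers $x$) to obtain
\begin{align*}
\Exs_{S, S'}\Big[ \exp\big(\tfrac{\lambda}{s} \lvert S \cap S' \rvert\big) \Big] \; \leq \; \Big( 1 + \tfrac{s}{\lvert F \rvert}\big(e^{\lambda/s} - 1\big) \Big)^{s} \; \leq \; \exp\!\Big( \tfrac{s^2}{\lvert F \rvert}\big(e^{\lambda/s} - 1\big) \Big).
\end{align*}

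The remaining step is to insert the numerology of the construction and massage this bound into the stated right-hand side. Writing $\kappa \defn s^2 / \lvert F \rvert$, the identity $\kappa(e^{\lambda/s} - 1) = \kappa e^{\lambda/s} - \kappa \leq e^{\lambda/s} - \kappa$ (valid once $\kappa \leq 1$, since then $\log\kappa \leq 0$) reduces the task to verifying the two elementary inequalities $\kappa \geq (1 - 1/\sqrt{\tmpdim})^2$ and $\lambda/s \leq (2+\lambda)/(\sqrt{\tmpdim}-1)$. Both are consequences of the sparsity level $s$ being fixed of order $\sqrt{\tmpdim}$, together with the size bound $\lvert F\rvert \geq (\lwk-1)/8$, the relation $\tmpdim = \lwk/4$, and the standing assumption $\tmpdim \geq 81$; combining them produces exactly~\eqref{EqnPackingLB}.

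I expect the main obstacle to be precisely this last calibration. One has to pin down the sparsity $s$ so that two opposing demands are met at once: $s$ must be large enough that $s^2/\lvert F\rvert$ stays above $(1-1/\sqrt{\tmpdim})^2$ and that $\lambda/s$ is controlled, yet small enough that $s \leq \lvert F\rvert$ and $s^2/\lvert F\rvert \leq 1$ — and this should hold uniformly over the admissible range of $\lvert F\rvert$ (including the boundary regime $\tmpdim = 81$), while $s$ also remains an integer, which is what accounts for the $\sqrt{\tmpdim}-1$ and the additive $2$ in the exponent. By contrast, everything upstream — the algebraic collapse to the diagonal quadratic form and the hypergeometric MGF estimate — is routine once the structural identities of Lemma~\ref{LemPacking} are invoked.
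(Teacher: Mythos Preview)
Your reduction to the overlap MGF is exactly the paper's first step: both compute $\inprod{\Delta^S}{\Delta^{S'}} = \tfrac{b^2\epsilon^2}{32s}(z^S)^\top \Sigma^2 z^{S'}$ via $U^\top U = I$ and $\Mat^\top\Mat = V\Sigma^2 V^\top$, bound by $\nu_{m_1}^2|S\cap S'|$, and are left with $\Exs\exp\big(\tfrac{\lambda}{s}|S\cap S'|\big)$. The genuine difference is in how this hypergeometric MGF is estimated. The paper writes it out exactly as $\binom{\tmpdim}{s}^{-1}\sum_i\binom{s}{i}\binom{\tmpdim-s}{s-i}e^{\lambda i/s} = \sum_i G_i z^i/i!$, then proves the termwise bound $G_i \leq \exp\big(-(1-1/\sqrt{\tmpdim})^2 + 2i/(\sqrt{\tmpdim}-1)\big)$ and sums. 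Your route---binomial domination of the hypergeometric via the factorial-moment identity---is a cleaner and more standard maneuver that lands on $(1+\tfrac{s}{|F|}(e^{\lambda/s}-1))^s$ directly; after that, both arguments set $s = \lfloor\sqrt{\tmpdim}\rfloor$ and finish with the same arithmetic.

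One point to tighten in your calibration: you need simultaneously $\kappa = s^2/|F| \leq 1$ (for the step $\kappa e^{\lambda/s}\leq e^{\lambda/s}$) and $\kappa \geq (1-1/\sqrt{\tmpdim})^2$. With $s = \lfloor\sqrt{\tmpdim}\rfloor$ these two force $|F|$ to be essentially $\tmpdim$; the lower bound $|F|\geq (\lwk-1)/8 \approx \tmpdim/2$ you cite would by itself allow $\kappa$ up to about $2$. The paper's computation in fact carries out the hypergeometric count with population size $\tmpdim = m_2-m_1+1$ rather than $|F|$, so it is effectively working with $|F|=\tmpdim$; you should align with that (or, if you want to keep $|F|$ explicit, take $s=\lfloor\sqrt{|F|}\rfloor$ and absorb the constant discrepancy into the ``$+2$'' in the exponent). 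Modulo this bookkeeping, your argument is correct.
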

\noindent See Appendix~\ref{AppLemPackingLB} for the proof of this lemma.

We can now complete the proof of Case 2. If the radius satisfies
$\epsilon^2 \leq \frac{\sqrt{\lwk}}{4}\sigma^2$, as a consequence of
the eigenvalue bound~\eqref{EqnEignBound}, we find that
\begin{align*}
  \frac{\lambda}{\sqrt{t}} = \frac{\epsilon^2b^2
    \nu^2_{m_1}}{32\sigma^2\sqrt{m_2-m_1+1}} \leq
  \frac{\sqrt{k_\ell} \cdot b^2 \cdot
    \frac{4a^2}{9b^2}}{144\sqrt{m_2-m_1+1}} \leq \frac{1}{18},
\end{align*}
where the last inequality uses $a<1$ and $m_2-m_1+1 = \lwk/4.$

With control of these quantities in hand, we can bound the testing
error.  The right hand side in expression~\eqref{EqnPackingLB} can be
upper bounded as
\begin{align*}
  \exp\left(- \left(1-\frac{1}{\sqrt{\tmpdim}} \right)^2 + \exp
  \left(\frac{2}{\sqrt{\tmpdim}-1} +
  \frac{\sqrt{\tmpdim}}{\sqrt{\tmpdim} - 1}
  \frac{\lambda}{\sqrt{\tmpdim}} \right)\right) \leq \exp\left(-
  \left(1-\frac{1}{9}\right)^2 + \exp \left(\frac{1}{4} +
  \frac{9}{8}\cdot \frac{1}{18} \right) \right) < 2,
\end{align*}
where we use the fact that $\tmpdim = \lwk/4 \geq 81$.  As a
consequence of Lemma~\ref{LemChisquareBound}, we have
\begin{align*}
\inf_{\psi} \UNIERR(\psi; \{0\}, \ellip_{\thetastar}, c\epsilon) \geq
1 - \frac{1}{2} \Big[ \Exs_{\eta, \eta'} \, \exp(
  \frac{\inprod{\eta}{\eta'}}{\sigma^2})-1 \Big]^{1/2} \; > \; \frac{1}{2} \geq
\rho,
\end{align*}
which completes the proof of Case 2, and hence the theorem.



\subsection{Proof of Corollary~\ref{CorZero}}
\label{AppCorZero}

Our proof of Corollary~\ref{CorZero} is based on a particular way of sandwiching the upper and
lower critical radii, which we first introduce.


\subsubsection{Sandwiching the upper and lower critical radii}
\label{AppLemonTea}

Recall the upper and lower critical dimensions from
equations~\eqref{EqnK-upper} and~\eqref{EqnK-lower}, respectively.
Suppose that we can find pair a pair of functions $(\kestl, \kestu)$
such that the sandwich relation
\begin{align}
\label{EqnBDK}
  \kestl(\epsilon) \,\leq\, \lwk(\epsilon) \,\leq\, \upk(\epsilon)\leq
      \kestu(\epsilon), ~~\text{ for } \epsilon > 0.
\end{align}
holds.  Under an additional monotonicity property, any such pair can
be used to bound the interval $[\epscritl, \epscritu]$.

\begin{lems}
\label{LemEASY}
Consider a pair of functions $\kestu$ and $\kestl$ that are each
non-increasing in $\epsilon$, and satisfy the sandwich
relation~\eqref{EqnBDK}.  Then we have $\big[\epscritl, \; \epscritu
  \big] \subseteq \big[\epsl, \; \epsu \big]$, where
\begin{align*}
  \epsu \defn \inf \left \{ \epsilon \mid \epsilon \geq
  \frac{8}{\sqrt{\rho}} \sigma^2
  \frac{\sqrt{\kestu(\epsilon)}}{\epsilon} \right \} \quad \mbox{and}
  \quad \epsl \defn \sup \left \{\epsilon \mid \epsilon \leq
  \frac{1}{4} \sigma^2 \frac{\sqrt{\kestl(\epsilon)}}{\epsilon} \right
  \}.
\end{align*}
\end{lems}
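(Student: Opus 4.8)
The plan is to recast all four quantities $\epscritu, \epscritl, \epsu, \epsl$ as the infimum or supremum of a level set of a common type, and then exploit the elementary monotonicity of $\inf$ and $\sup$ under set inclusion. First I would rewrite the defining relation $\epsilon \ge \frac{8}{\sqrt{\rho}}\sigma^2 \frac{\sqrt{k(\epsilon)}}{\epsilon}$ in the equivalent form $\epsilon^2 \ge \frac{8}{\sqrt{\rho}}\sigma^2\sqrt{k(\epsilon)}$ (multiplying by $\epsilon>0$), and likewise $\epsilon \le \frac{1}{4}\sigma^2\frac{\sqrt{k(\epsilon)}}{\epsilon}$ as $\epsilon^2 \le \frac{1}{4}\sigma^2\sqrt{k(\epsilon)}$. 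For any non-increasing $k(\cdot)$ introduce the sets
\begin{align*}
S[k] &\defn \Big\{\epsilon > 0 \;\Big|\; \epsilon^2 \ge \tfrac{8}{\sqrt{\rho}}\,\sigma^2\sqrt{k(\epsilon)}\Big\}, &
T[k] &\defn \Big\{\epsilon > 0 \;\Big|\; \epsilon^2 \le \tfrac{1}{4}\,\sigma^2\sqrt{k(\epsilon)}\Big\},
\end{align*}
so that, by definition, $\epscritu = \inf S[\upk]$, $\epsu = \inf S[\kestu]$, $\epscritl = \sup T[\lwk]$, and $\epsl = \sup T[\kestl]$. That these four sets are non-empty and the corresponding infima/suprema are finite and attained at the relevant ``crossing point'' uses only that each of $\upk, \lwk, \kestu, \kestl$ is non-increasing together with $\epsilon\mapsto\epsilon^2$ being increasing, which is exactly the analysis carried out in Appendix~\ref{AppSolution}.

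The key observation is a monotonicity property of these set maps: if $k_1(\epsilon) \le k_2(\epsilon)$ for all $\epsilon > 0$, then $S[k_2] \subseteq S[k_1]$ and $T[k_1] \subseteq T[k_2]$. Indeed, $\sqrt{k_1}\le\sqrt{k_2}$ pointwise, so any $\epsilon$ with $\epsilon^2 \ge \frac{8}{\sqrt{\rho}}\sigma^2\sqrt{k_2(\epsilon)}$ a fortiori satisfies $\epsilon^2 \ge \frac{8}{\sqrt{\rho}}\sigma^2\sqrt{k_1(\epsilon)}$, giving the first inclusion, and the second is identical with the inequality reversed. Since the infimum is non-decreasing and the supremum is non-increasing under passage to a subset, applying these inclusions with the sandwich relation~\eqref{EqnBDK}---that is, $\upk \le \kestu$ and $\kestl \le \lwk$---yields
\begin{align*}
\epscritu = \inf S[\upk] \;\le\; \inf S[\kestu] = \epsu, \qquad\qquad \epsl = \sup T[\kestl] \;\le\; \sup T[\lwk] = \epscritl.
\end{align*}
Thus $\epsl \le \epscritl$ and $\epscritu \le \epsu$, which is precisely the claimed inclusion $\big[\epscritl,\; \epscritu\big] \subseteq \big[\epsl,\; \epsu\big]$: the inclusion is vacuous when $\epscritl > \epscritu$, and otherwise the two displayed inequalities bracket both endpoints of the interval.

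The argument is essentially bookkeeping, so there is no serious obstacle; the only point requiring care is keeping the directions straight. A \emph{larger} dimension function produces a \emph{larger} right-hand side in both fixed-point equations, which \emph{shrinks} the superlevel set $S[k]$ but \emph{enlarges} the sublevel set $T[k]$; the net effect is that $\inf S[k]$ and $\sup T[k]$ both move in the same direction (upward) as $k$ increases, which is what makes the two halves of the sandwich combine correctly. The monotonicity of $\epsilon\mapsto\epsilon^2$ versus the non-increasing $\epsilon\mapsto\sqrt{k(\epsilon)}$ is invoked only to ensure $S[k]$ is an up-set and $T[k]$ a down-set (so that $\inf S[k]$ and $\sup T[k]$ are the fixed points described in the lemma statements), and no continuity hypothesis on $k(\cdot)$ beyond monotonicity is needed.
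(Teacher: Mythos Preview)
Your proof is correct and follows essentially the same approach as the paper's own argument, which is even more terse (``These bounds follow directly by replacing $\upk$ by $\kestu$ \ldots''). Your version simply makes explicit the set-inclusion bookkeeping that the paper leaves to the reader, and the monotonicity analysis you defer to Appendix~\ref{AppSolution} is exactly what the paper invokes as well.
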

\begin{proof}
  These bounds follow directly by replacing $\upk$ by $\kestu$
  (respectively $\lwk$ by $\kestl$) in the definition of $\epsu$
  (respectively $\epsl$).  The non-increasing nature of the functions
  $(\kestl, \kestu)$ ensure that the quantities $(\epsl, \epsu)$ are
  well-defined.
\end{proof}


\subsubsection{Proof of upper bound}

We prove the upper bound via an application of Lemma~\ref{LemEASY}
combined with Theorem~\ref{ThmProjUB}.  In particular, we construct
functions\footnote{Strictly speaking, these functions also depend on
  $\thetastar$ and the ellipse $\Ellipse$, but we omit this dependence
  so as to simplify notation.}  $(f_k^\ell, f_k^u)$ that sandwich
local Kolmogorov width above and below by
\begin{align}
 \label{EqnSandwichKol}
 f_k^\ell(\epsilon) ~\leq~ \kwidth_k(\ellip_{\thetastar} \cap \Ball(a
 \epsilon)) \leq \kwidth_k(\ellip_{\thetastar} \cap \Ball(\epsilon))
 ~\leq f_k^u(\epsilon) \qquad \mbox{for every integer $1\leq k \leq
   \usedim$.}
 \end{align} 
Here $a \in (0,1)$ is the constant involved in the statement of
Theorem~\ref{ThmLowerBound}.  It is then straightforward to verify
that the functions
\begin{align}
\label{EqnKest}
\kestu(\epsilon) \defn \arg \min_{1 \leq k \leq \usedim} \big \{
f_k^u(\epsilon) \leq \frac{\epsilon}{\sqrt{2}} \big \} ~~\text{ and
}~~ \kestl(\epsilon) \defn \arg \min_{1 \leq k \leq \usedim} \big\{
f_k^\ell(\epsilon) \leq 3b \epsilon \big\}.
\end{align}
satisfy the conditions of Lemma~\ref{LemEASY}, so that it can be
applied.

We begin by upper bounding the localized Kolmogorov
width~\eqref{EqnKolmogorov} of $\ellip \cap \Ball(\epsilon)$. Let
$\trunc{k}$ denote the orthogonal projection onto the first $k$
coordinates of the ellipse.  We then have
\begin{align*}
\kwidth^2_{k}(\ellip_\thetastar \cap \Ball(\epsilon)) \leq
\max_{\theta \in \ellip, \|\theta - \thetastar\|_2 \leq \epsilon}
\|(\theta - \thetastar) - \trunc{k} (\theta - \thetastar) \|^2_2
= \max_{\theta \in \ellip, \|\theta\|_2 \leq
  \epsilon}\sum_{i=k+1}^\usedim \theta_i^2.
\end{align*}
Since the scalars $\{\mu_i\}_{i=1}^\usedim$ are ranked in
non-increasing order, for any vector $\theta \in \Ellipse$, we have
\begin{align*}
\sum_{i=k+1}^\usedim \theta_i^2 = \sum_{i=k+1}^\usedim \mu_i
\frac{\theta_i^2}{\mu_i} & \; \leq \; \mu_{k+1} \sum_{i=k+1}^\usedim
\frac{\theta_i^2}{\mu_i} \; \leq \; \mu_{k+1},
\end{align*}
where the final inequality follows from the inclusion $\theta \in
\Ellipse$.  Consequently, we have proved that
\begin{align*}
\kwidth_{k}(\ellip_{\thetastar} \cap \Ball(\epsilon)) & \leq
f_k^u(\epsilon) \; \defn \; \underbrace{\min \{ \epsilon, \sqrt{\mu_{k+1}}}_{ = : f_k^u(\epsilon)}\}.
\end{align*}
It is easy to check that the function $\epsilon \mapsto
\kestu(\epsilon)$ from equation~\eqref{EqnKest} is non-increasing, so
that an application of Lemma~\ref{LemEASY} yields the claimed upper
bound~\eqref{EqnMinuet}.


\subsubsection{Proof of lower bound}

We prove the lower bound by a combination of Lemma~\ref{LemEASY} and
Theorem~\ref{ThmLowerBound}, and in order to do so, we need to lower
bound the localized Kolmogorov width.  As previously noted, the
Bernstein width~\eqref{EqnBernstein} is always a lower bound on the
Kolmogorov width, so that
\begin{align*}
  \kwidth_k(\ellip \cap \Ball(a \epsilon))\geq b_{k,2}(\ellip \cap
  \Ball(a \epsilon)) = \underbrace{\min\{a \epsilon,
    \sqrt{\mu_{k+1}}\}}_{ = : f_k^\ell(\epsilon)}
\end{align*}
where the last inequality follows from the fact that we can always
inscribe a Euclidean ball of radius $\sqrt{\mu_{k+1}}$ centered at
zero inside the ellipse $\ellip$ truncated to its first
$(k+1)$-coordinates.  It is straightforward to verify that the
corresponding $\kestl$ that is defined in \eqref{EqnKest} is
non-increasing in $\epsilon$, so that the the claimed lower
bound~\eqref{EqnMinuet} follows by applying Lemma~\ref{LemEASY} with
Theorem~\ref{ThmLowerBound}.


\subsection{Proof of Corollary~\ref{CorCorner}}
\label{AppCorCorner}

We again separate our proof into two parts, corresponding to the upper
bound and lower bounds.  The upper bound is proved via an application
of Lemma~\ref{LemEASY} and Theorem~\ref{ThmProjUB}.  On the other
hand, we prove the lower bound by using Theorem~\ref{ThmLowerBound}
and Lemma~\ref{LemBernstein} in conjunction with Lemma~\ref{LemEASY}.

\subsubsection{Proof of upper bound}

Let us introduce the convenient shorthand $\at_{-s} \defn (\at_1,
\ldots, \at_{s-1}, \at_{s+1}, \ldots, \at_\usedim)$. In terms of the
the local minimax testing radius $\epsopt(\thetastar; \ellip)$
previously defined in equation~\eqref{EqnDefnLocalMinimaxOPT},
Theorem~\ref{ThmProjUB} guarantees that $\epsopt(\thetastar) \leq
\epscritu(\thetastar; \ellip)$.  Consequently, in order to establish
the upper bound stated in Corollary~\ref{CorCorner}, it suffices to
show that for any vector $\at$ with $\at_s \geq \sqrt{\mu_s} -
\newepscritu(s,\ellip)$ and $\at_{-s} = 0$, we have
\mbox{$\epscritu(\thetastar ; \ellip) \leq \newepscritu(s, \ellip)$.}
In order to do so, it suffices to show that
\begin{align}
\label{EqnSushi}
  \kwidth_{\mind_u(\delta, s)}(\ellip_{\thetastar} \cap \Ball(\delta))
  \leq \frac{\delta}{\sqrt{2}} \qquad \mbox{for each $\delta > 0$.}
\end{align}
It then follows that $\mind_u(\delta; s) \geq \upk(\delta)$, from
which an application of Lemma~\ref{LemEASY} implies that
\mbox{$\epscritu(s, \ellip) \leq \newepscritu(s, \ellip)$,} as
desired.

Accordingly, the remainder of our argument is devoted to establishing
inequality~\eqref{EqnSushi}.  Let $\trunc{\mind}$ denote the
orthogonal projection onto the span of the first $\mind$ standard
basis vectors $\{e_i\}_{i=1}^\mind$.  By
definition~\eqref{EqnKolmogorov}, the localized Kolmogorov width is
upper bounded by
\begin{align*}
\kwidth_{\mind}(\ellip_{\thetastar} \cap \Ball(\delta)) \leq
\underbrace{\max_{\theta \in C} \sqrt{ \sum_{i=\mind+1}^d
    \theta_i^2}}_{= \,: T_{\mind}^u} \qquad \mbox{where} \quad C \defn
\big \{ \theta \in \real^\usedim \mid \sum_{i=1}^\usedim
\frac{\theta_i^2}{\mu_i} \leq 1, ~~ (\theta_s - \at_s)^2 + \sum_{i\neq
  s} \theta_i^2 \leq \delta^2 \big \},
\end{align*}
where we have used the fact that $\thetastar_{-s} = 0.$

Let us introduce the shorthand $\mindnots \defn
\{1,2,\ldots,s-1,s+1,\ldots,\mind\}$.  Observe that setting $\theta_j
\neq 0$ for any $j \in \mindnots$ only makes the constraints defining
$C$ more difficult to satisfy, and does not improve the objective
function defining $T_\mind^u$.  Therefore, we can reduce the problem to
\begin{align*}
  \max_{\theta_s, \{\theta_i\}_{i=\mind+1}^\usedim}
  \sum_{i=\mind+1}^\usedim \theta_i^2, \quad \mbox{such that} \quad
  \sum_{i=\mind+1}^\usedim \frac{\theta^2_i}{\mu_i} \leq 1 \text{ and
  } (\theta_s - \at_s)^2 + \sum_{i=\mind+1}^{\usedim} \theta^2_i \leq
  \delta^2.
\end{align*}
Since the scalars $\{\mu_i\}_{i=\mind+1}^\usedim$ are ranked in
non-increasing order, any optimal solution must satisfy $\theta_i = 0$
for all $i \in \{\mind+2, \ldots, \usedim\}$. Putting pieces together,
the optimal solution is defined by a pair $(\theta_s,
\theta_{\mind+1})$ that satisfy the relations
\begin{align*}
(\theta_s - \at_s)^2 + \theta_{\mind+1}^2 &= \delta^2 ~~\text{ and }~~
  \frac{\theta^2_s}{\mu_s} + \frac{\theta^2_{\mind+1}}{\mu_{\mind+1}}
  = 1.
\end{align*}
Solving these equations yields
\begin{align} \label{EqnRDUair}
  \theta_s = \frac{\at_s}{1-t} - \sqrt{ \frac{\delta^2 -
      \mu_{\mind+1}}{1-t} + \frac{t(\at_s)^2}{(1-t)^2}},
 \quad \mbox{and} \quad
  \theta_{\mind+1}^2 = \delta^2 - \left( \sqrt{ \frac{\delta^2 -
      \mu_{\mind+1}}{1-t} + \frac{t(\at_s)^2}{(1-t)^2}} -
  \frac{t\at_s}{1-t} \right)^2,
\end{align}
where we define $t \defn \mu_{\mind+1}/\mu_s.$ We introduce the
following lemma.
\begin{lems}
\label{LemCalculation}
For any $\delta \geq \newepscritu(s,\ellip)$ and $\sqrt{\mu_s} -
\newepscritu(s,\ellip) \leq \at_s \leq \sqrt{\mu_s}$, we have
$\theta_{\mind+1}^2 \leq \frac{\delta^2}{2}$.
\end{lems}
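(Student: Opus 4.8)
The plan is to bypass the closed-form expressions in~\eqref{EqnRDUair} and argue directly from the two relations that the optimal pair $(\theta_s,\theta_{\mind+1})$ satisfies (as recorded just before~\eqref{EqnRDUair}): the ellipse equality $\frac{\theta_s^2}{\mu_s}+\frac{\theta_{\mind+1}^2}{\mu_{\mind+1}}=1$ and the ball equality $(\theta_s-\at_s)^2+\theta_{\mind+1}^2=\delta^2$. From the ellipse equality I would read off the two elementary consequences $\theta_{\mind+1}^2\le\mu_{\mind+1}$ and $\theta_{\mind+1}^2\le\mu_{\mind+1}\,(1-\theta_s^2/\mu_s)$; from the ball equality, $|\theta_s-\at_s|\le\delta$, hence $\theta_s\ge\at_s-\delta$. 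The only additional ingredient is the definition of $\mind=\newupk(\delta;s)$ in~\eqref{EqnDefKcorner}: since $\mind$ is the largest index with $\mu_k^2\ge\frac1{64}\delta^2\mu_s$ and $\{\mu_k\}$ is non-increasing, we have $\mu_{\mind+1}^2<\frac1{64}\delta^2\mu_s$, i.e. $\mu_{\mind+1}<\frac18\,\delta\sqrt{\mu_s}$ (the case $\mind=\usedim$ being vacuous since then $\theta_{\mind+1}^2=0$, and throughout we use $s\le\mind$ as in the reduced-problem setup preceding the lemma).

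With these facts I would split on the size of $\delta$ relative to $\sqrt{\mu_s}$. If $\delta\ge\frac12\sqrt{\mu_s}$, then $\sqrt{\mu_s}\le2\delta$ and the crude bound already suffices: $\theta_{\mind+1}^2\le\mu_{\mind+1}<\frac18\,\delta\sqrt{\mu_s}\le\frac14\delta^2\le\frac12\delta^2$. If instead $\delta<\frac12\sqrt{\mu_s}$, then the hypotheses $\at_s\ge\sqrt{\mu_s}-\newepscritu(s,\ellip)$ and $\delta\ge\newepscritu(s,\ellip)$ combine to give $\theta_s\ge\at_s-\delta\ge\sqrt{\mu_s}-2\delta$, and the right-hand side is nonnegative in this regime, so $\theta_s^2\ge(\sqrt{\mu_s}-2\delta)^2$ and therefore $1-\theta_s^2/\mu_s\le\frac{4\delta}{\sqrt{\mu_s}}-\frac{4\delta^2}{\mu_s}\le\frac{4\delta}{\sqrt{\mu_s}}$. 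Substituting into $\theta_{\mind+1}^2\le\mu_{\mind+1}\,(1-\theta_s^2/\mu_s)$ and using $\mu_{\mind+1}<\frac18\,\delta\sqrt{\mu_s}$ gives $\theta_{\mind+1}^2<\frac18\,\delta\sqrt{\mu_s}\cdot\frac{4\delta}{\sqrt{\mu_s}}=\frac12\delta^2$, which is the claim.

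There is no genuine difficulty here; the argument is essentially bookkeeping. The one point to be careful about is the second regime, where both hypotheses of the lemma are used precisely to ensure $\theta_s\ge\sqrt{\mu_s}-2\delta\ge0$, so that passing to squares preserves the inequality. It is also worth noting that the proof only uses $\at_s\ge\sqrt{\mu_s}-\delta$, which is implied by---and slightly weaker than---the stated hypothesis, and that the case split above could equally well be made at any fixed multiple of $\sqrt{\mu_s}$ without affecting the conclusion.
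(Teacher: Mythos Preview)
Your proof is correct and takes a genuinely different route from the paper's. The paper works directly with the closed-form expression~\eqref{EqnRDUair} for $\theta_{\mind+1}^2=\delta^2-\Delta^2$: it rationalizes $\Delta$, substitutes $t=\mu_{\mind+1}/\mu_s$ and $\at_s=\sqrt{\mu_s}-c\delta$ with $c\in[0,1]$, uses $\mu_{\mind+1}^2\le\tfrac{1}{64}\delta^2\mu_s$ to simplify, and ultimately shows $\Delta/\delta\ge 3/4$, whence $\theta_{\mind+1}^2\le 7\delta^2/16<\delta^2/2$. Your argument instead bypasses the closed form entirely, reading off only $\theta_{\mind+1}^2\le\mu_{\mind+1}(1-\theta_s^2/\mu_s)$ and $\theta_s\ge\at_s-\delta$ from the two constraint relations, and then splitting on whether $\delta\gtrless\tfrac12\sqrt{\mu_s}$. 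This is shorter, avoids the algebra, and is arguably more transparent about where each hypothesis enters; it also makes clear (as you note) that only the weaker assumption $\at_s\ge\sqrt{\mu_s}-\delta$ is needed. The paper's computation, on the other hand, yields the slightly sharper constant $7/16$ and does not require a case split. Both approaches rest on the same key input $\mu_{\mind+1}<\tfrac18\delta\sqrt{\mu_s}$ from the definition of $\mind$. One minor remark: although you refer to the ``ellipse equality'' and ``ball equality,'' your argument in fact only uses them as inequalities, so it remains valid even without the paper's claim that both constraints bind at the optimum.
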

\noindent See Appendix~\ref{AppCalc} for the proof of this result. 

\noindent Therefore, the localized Kolmogorov width is at most
$\kwidth^2_{\mind}(\ellip_{\thetastar} \cap \Ball(\delta)) \leq
(T_{\mind}^u)^2 = \theta_{\mind+1}^2 \leq \frac{\delta^2}{2}$, which
completes the proof of the upper bound stated in
Corollary~\ref{CorCorner}.


\subsubsection{Proof of lower bound}  
In terms of the the local minimax testing radius $\epsopt(\thetastar;
\ellip)$ previously defined in
equation~\eqref{EqnDefnLocalMinimaxOPT}, Lemma~\ref{LemBernstein}
guarantees that $\epsopt(\thetastar) \geq \epscritb(\thetastar;
\ellip)$.  Therefore, in order to establish the lower bound stated in
Corollary~\ref{CorCorner}, it is sufficient for us to show that for
any vector $\at$ with $\at_s \leq \sqrt{\mu_s} -
\newepscritl(s,\ellip)$ and $\at_{-s} = 0$, we have
$\epscritb(\thetastar ; \ellip) \geq \newepscritl(s, \ellip)$.
Suppose that we can show that
\begin{align}
\label{EqnDuck}
  \mind b^2_{\mind - 1,\infty}(\ellip_{\thetastar}) \geq \delta^2,
\end{align}
where we have introduced the shorthand $\mind \defn \newlwk(\delta;
s)$.  It then follows that then $\kb(\delta) \geq \mind$, from which
an application of Lemma~\ref{LemEASY} guarantees that
$\epscritb(\thetastar ; \ellip) \geq \newepscritl(s, \ellip)$.
Here we apply Lemma~\ref{LemEASY} with respect to $\kb$ and $\epscritb$
replacing quantities $\lwk$ and $\epscritl$ in the statement 
which holds by the same argument of the original proof.

The remainder of our argument is to devoted to proving
inequality~\eqref{EqnDuck}.  Consider the set
\begin{align*}
\mathcal{H} & \defn \left \{\theta \mid \theta_s = \at_s, ~
\theta_{m+1}, \ldots, \theta_d = 0, ~
\theta_i \in [-\frac{\delta}{\sqrt{|\mathcal{M}|}}, \frac{
    \delta}{\sqrt{|\mathcal{M}|}}],~ i \in \mathcal{M} \right\},
~~\mathcal{M} \defn \{1\leq i \leq \mind, i\neq s\}.
\end{align*}
We claim that the set $\mathcal{H}$ is contained within the ellipse
$\ellip$.  In order to see this, first note that $\mu_i$ are ranked in
a non-increasing order, so we have $\frac{\delta^2}{|\mathcal{M}|}
\sum_{i\neq s}^{m} \frac{1}{\mu_i} \leq \delta^2/\mu_{m}$.  It implies
\begin{align} \label{EqnEllCornerLB}
\enorm{\theta}^2 = \frac{(\at_s)^2}{\mu_s} + \sum_{i\neq s}^{m}
\frac{\delta^2}{\mu_i|\mathcal{M}|}
\leq \frac{(\at_s)^2}{\mu_s} + \frac{\delta^2}{\mu_{m}}
= 1 - \frac{2 w}{\sqrt{\mu_s}} + \frac{w^2}{\mu_s} +
\frac{\delta^2}{\mu_{m}},
\end{align}
where we plugged in $\at_s = \sqrt{\mu_s} - w$ with $w \geq
\newepscritl(s,\ellip).$ On one hand, inequality $w \leq \sqrt{\mu_s}$
guarantees that $\frac{w^2}{\mu_s} - \frac{w}{\sqrt{\mu_s}}<0.$ On the
other hand, inequalities $\mu_k^2 \geq \delta^2\mu_s$ and $\delta\leq
\newepscritl(s,\ellip)$ yield that $\frac{\delta^2}{\mu_{k}} \leq
\frac{\delta}{\sqrt{\mu_s}} \leq \frac{w}{\sqrt{\mu_s}}$.  Combining
these two facts, we have $\enorm{\theta}^2 \leq 1$ for every $\theta
\in \mathcal{H}$, namely set $\mathcal{H} \subset \ellip$.

It means that we are able to inscribe a $|\mathcal{M}|-1$-dimensional
$\ell_{\infty}$ ball with radius $\delta/\sqrt{|\mathcal{M}|}$ into
the ellipse $\ellip$, namely $\mind b^2_{\mind -
  1,\infty}(\ellip_{\thetastar}) \geq \delta^2$.  Putting pieces
together, we finish the proof of the lower bound of
Corollary~\ref{CorCorner}.


\section{Discussion}
\label{SecDiscussion}

In this paper, we have studied the local geometry of compound testing
problems in ellipses.  Our main contribution was to show that it is
possible to obtain a sharp characterization of the localized minimax
testing radius at a given vector $\thetastar$ in terms of an equation
involving the Kolmogorov width of a particular set.  This set involves
the geometry of the ellipse, but is localized to the vector
$\thetastar$ via intersection with a Euclidean ball.  This form of
localization and the resulting fixed point equation is similar to
localization techniques that are used in proving optimal rates for
estimation (e.g.,~\cite{vandeGeer}), and our work demonstrates that
similar concepts are also useful in establishing optimal testing
rates.  We anticipate that this perspective will be useful in studying
adaptivity and local geometry in other testing problems.


\subsection*{Acknowledgments}
This work was partially supported by Office of Naval Research grant
DOD ONR-N00014, and National Science Foundation grant NSF-DMS-1612948
to MJW.

\vspace*{2cm}



\appendix
\section{Proof of Lemma~\ref{LemBernstein}}
\label{AppThmBLB}

In order to simplify notation, we use $k$ as a shorthand for $\kb$
throughout this proof.  Our goal is to prove that if $\epsilon^2 \leq
\frac{\sqrt{k}\sigma^2}{4}$, then any test has testing error at least
$1/2$.  Let $e_i \in \real^\usedim$ denote the standard basis vector
with the $i$-th coordinate equal to one. The definition of $k$
guarantees that we can construct a $2^k$-sized collection of
perturbation vectors of the form
\begin{align}
  \label{EqnPerturbTilde}
\theta_b = \thetastar + \frac{\epsilon}{\sqrt{k}} \sum_{i=1}^k b_i
e_i, ~~~b \in \{-1, +1\}^k
\end{align}
such that each $\theta_b$ lies on the boundary of the $\ell_{\infty}$
ball around $\thetastar$ with radius $\frac{\epsilon}{\sqrt{k}}$.
This last property ensures that each $\theta_b$ belongs to the ellipse
$\ellip$.  At the same time, we have \mbox{$\|\theta_b -
  \thetastar\|^2 = k(\frac{\epsilon}{\sqrt{k}})^2 = \epsilon^2$,}
valid for all Boolean vectors $b \in \{-1, +1\}^k$, so that each
perturbation vector $\theta_b$ indexes a distribution in the
alternative hypothesis class $\Hyp_1$.

By Lemma~\ref{LemChisquareBound}, we can lower bound the testing error
as
\begin{align*}
  \inf_{\psi} \UNIERR(\psi; \{\thetastar\}, \ellip, \epsilon) =
  \inf_{\psi} \UNIERR(\psi; \{0\}, \ellip_{\thetastar}, \epsilon) \geq
  1 - \frac{1}{2} \sqrt{\Exs_{\eta, \eta'\sim \qprob} \,
    \exp(\frac{\inprod{\eta}{\eta'}}{\sigma^2} )-1}.
\end{align*}

Let $\qprob$ denote the uniform distribution over the set $\{\theta_b
- \thetastar, b \in \{-1, +1\}^k \}$.  Introducing the shorthand
$\numvec \defn 2^{k}$, we have
\begin{align}
  \label{EqnChisquare}
  \Exs_{\eta, \eta'\sim \qprob} \,
  \exp(\frac{\inprod{\eta}{\eta'}}{\sigma^2} ) \; = \;
  \frac{1}{\numvec^2} \sum_{b,b'}\exp(\frac{(\theta_b - \thetastar)^T
    (\theta_{b'} - \thetastar)}{\sigma^2}) & = \frac{1}{\numvec}
  \sum_{b}\exp(\frac{(\theta_b -
    \thetastar)^T(\epsilon\text{1}_d/\sqrt{k})}{\sigma^2})\\
\notag &= 2^{-k} \sum_{i=0}^k \binom{k}{i}
\exp(\frac{\epsilon^2(k-2i)}{k \sigma^2})\\
\notag &= \Big( \frac{\exp(\epsilon^2/k\sigma^2) +
  \exp(-\epsilon^2/k\sigma^2)}{2} \Big)^k,
\end{align}
where we have used the symmetry of the problem.  It can be verified
via elementary calculation that $\frac{\exp(x) + \exp(-x)}{2} \leq
1+x^2$ for all $|x| \leq 1/2$.  Setting $x =
\frac{\epsilon^2}{k\sigma^2}$, we have $x \leq \frac{1}{4\sqrt{k}} <
1/2$, and hence
\begin{align*}
  \frac{1}{\numvec^2} \sum_{b,b'}\exp(\frac{(\theta_b - \thetastar)^T
    (\theta_{b'} - \thetastar)}{\sigma^2}) \leq (1+
  \frac{\epsilon^4}{k^2 \sigma^4})^k \leq
  \exp(\frac{\epsilon^4}{k\sigma^4}),
\end{align*}
where in the last step we used the standard bound $1 + x \leq e^x$.
Therefore, the testing error is lower bounded by $1 - \frac{1}{2}
\sqrt{e^{\epsilon^4/k\sigma^4}-1}$.  Thus, whenever \mbox{$\epsilon^2
  \leq \sqrt{k}\sigma^2/4$,} the testing error is lower bounded as
$\inf_{\psi} \UNIERR(\psi; \{\thetastar\}, \ellip, \delta) \geq 1 -
\frac{1}{2} \sqrt{e^{1/16} - 1} \geq \frac{1}{2}$, which completes the
proof of Lemma~\ref{LemBernstein}.


\section{Proofs for Theorem~\ref{ThmLowerBound}}

In this appendix, we collect the proofs of various auxiliary results involved
in the proof of Theorem~\ref{ThmLowerBound}.

\subsection{Proof of Lemma~\ref{LemChain}}
\label{AppLemChain}
Note that the inequality $b_{k,2}(\ellip_{\thetastar}\cap
\Ball(a\epsilon)) \leq \kwidth_{k}(\ellip_{\thetastar}\cap
\Ball(a\epsilon))$ follows as an immediate consequence of the relation
between widths.  It remains to prove inequalities (i)-(iii).

\paragraph*{Proof of inequalities (i) and (iii):}

Recalling the definition of the Bernstein width, we claim that
$\kwidth_k((\ellip\cap \Ball(a\epsilon))) = b_{k,2}(\ellip\cap
\Ball(a\epsilon)) = \min\{a \epsilon, \sqrt{\mu_{k+1}}\}$.  In order
to prove this claim, it suffices to show that
\begin{align*}
\kwidth_k(\ellip \cap \Ball(a\epsilon)) \stackrel{(i)}{\leq} \min\{a
\epsilon, \sqrt{\mu_{k+1}}\}, \quad \mbox{and} \quad b_{k,2}(\ellip
\cap \Ball(a\epsilon)) \stackrel{(ii)}{\geq} \min\{a \epsilon,
\sqrt{\mu_{k+1}}\}.
\end{align*}

Inequality (i) follows easily by direct calculation after taking the
$k$-dimensional projection in the definition of Kolmogorov width to be
projecting to the span of $\{e_1,\ldots,e_k\}$.  To show the second
part, note that any vector $v \in \real^\usedim$ with $\ltwo{v} \leq
\sqrt{\mu_{k+1}}$ and $v_i = 0$ for all $i = k+2, \ldots, \usedim$
satisfies
\begin{align*}
\enorm{v}^2 = \sum_{i=1}^\usedim \frac{v_i^2}{\mu_i} =
\sum_{i=1}^{k+1} \frac{v_i^2}{\mu_i} \, \stackrel{(a)}{\leq} \,
\frac{1}{\mu_{k+1}} \sum_{i=1}^{k+1} v_i^2 
 \stackrel{(b)}{\leq} 1,
\end{align*}
where inequality (a) follows from the non-increasing ordering of the
sequence $\{\mu_i \}_{i=1}^\usedim$ and inequality (b) follows from
the structure of vector $v.$ Consequently, the ellipse $\ellip$
contains a $k+1$-dimensional $\ell_2$ ball centered at $0$ of radius
$\sqrt{\mu_{k+1}}$.  Therefore we have $b_{k,2}(\ellip\cap
\Ball(a\epsilon)) \geq \min\{a \epsilon, \sqrt{\mu_{k+1}}\}$. We
complete the proof of inequality (iii).

Consider every vector $v \in \ellip$ that lies in the
$k+1$-dimensional $\ell_2$ ball that specified above.  Since
$\enorm{\at} \leq 1/2$, we have $\enorm{2\at} \leq 1$ which implies
that $2\at \in \ellip.$ By the convexity of $\ellip$, we have $\at +
v/2 \in \ellip$.  It means that $b_{k,2}(\ellip_{\thetastar} \cap
\Ball(a\epsilon)) \geq \min\{a \epsilon,
\frac{1}{2}\sqrt{\mu_{k+1}}\}$, which completes the proof of
inequality (i).


\paragraph*{Proof of inequality (ii):} 

Again consider projecting to the span of $\{e_1,\ldots,e_k\}$,
therefore the Kolmogorov width can be controlled as
\begin{align*}
\kwidth_{k}^2(\ellip_{\thetastar}\cap \Ball(a\epsilon)) \leq
(a\epsilon)^2 \wedge \max \Big\{\sum_{i=k+1}^{\usedim} \Delta_i^2
~\mid~ \enorm{\at + \Delta} \leq 1, ~\ltwo{\Delta} \leq a \epsilon
\Big\}.
\end{align*}
By the triangle inequality, we obtain $\enorm{\Delta} \leq 1+
\enorm{\at} \leq 3/2$ where the last inequality uses $\enorm{\at} \leq
1/2.$ Letting $\Delta_{k+1} = \sqrt{\mu_{k+1}}$ and $\Delta_i = 0$ for
$i\neq k+1$, we have $\kwidth_{k}(\ellip_{\thetastar}\cap
\Ball(a\epsilon)) \leq \min\{a\epsilon, 3/2\sqrt{\mu_{k+1}}\}$ which
thus proves the inequality (ii).


\subsection{Proof of \autoref{LemPacking}}
\label{AppLemPacking}

We break the proof of Lemma~\ref{LemPacking} into two parts. In the
first part, we construct the vector $\thetadag$ and the collection
$\{u_i\}_{i=1}^{k_\ell}$, and show that the properties (a)--(d) hold.
In the second part, we show that matrix $\Mat$ satisfies the
eigenvalue property.

\subsubsection{Part I}

Recall that the ellipse norm is defined as $\enorm{\theta}^2 =
\sum_{i=1}^{\usedim} \frac{\theta_i^2}{\mu_i}$, so that $\theta \in
\ellip$ is equivalent to $\enorm{\theta} \leq 1$. Recall that
$\NewMat$ denotes a diagonal matrix with diagonal entries
$(1/\mu_1,\ldots,1/\mu_\usedim)$.

\paragraph{Constructing $\thetadag$:}

Let us first define a vector $\thetadag \in \ellip$ that satisfies the
assumptions in Lemma~\ref{LemPacking}.  Define the function $\psi:(0,
\infty) \rightarrow (0, \infty)$ via
\begin{align*}
  \psi(r) & \defn r^2 \ltwo{\NewMat (\Ind_\usedim + r\NewMat)^{-1}
    \thetastar}^2 = r^2 \sum_{i=1}^d \frac{1}{(1 +
    r/\mu_i)^2}\frac{(\thetastar_i)^2}{\mu_i^2} = \sum_{i=1}^d
  \frac{r^2}{(\mu_i + r)^2} (\thetastar_i)^2.
\end{align*}
Note that $\psi$ is a continuous and non-decreasing function on
$[0,\infty)$ such that $\psi(0) = 0$ and $\lim_{r \rightarrow +\infty}
  \psi(r) = \norm{\thetastar}$.  Since $a \in (0,1)$, there must
  exists some $r > 0$ such that $\psi(r) = a \epsilon$. Given this
  choice of $r$, we then define $\thetadag \defn (\Ind_\usedim
  +r\NewMat)^{-1} \thetastar$.  Since $r > 0$, our definition ensures
  that $\thetadag \in \ellip$: more precisely, we have
\begin{align*}
\enorm{\thetadag}^2 = \sum_{i=1}^d \frac{1}{(1+r/\mu_i)^2}
\frac{(\thetastar_i)^2}{\mu_i} < \enorm{\thetastar}^2 \leq 1.
\end{align*}

\vspace*{0.5cm}

Now we are ready to construct orthogonal unit vectors
$u_1,\ldots,u_{\lwk}$ such that proper perturbations of $\thetadag$
towards linear combinations of those directions still lie in the set
$\ellip.$ We do this in a sequential way.

\paragraph{Constructing the vector $u_1$.}

Defining the vector $v_1 \defn \thetastar - \thetadag$, the definition
of $\thetadag$ guarantees that $v_1$ is parallel to both $\thetadag -
\thetastar$ and $\NewMat \thetadag$, so that condition 1 of the lemma
holds.  Note also that $\enorm{\thetadag - v_1}^2 = \enorm{\thetadag +
  v_1}^2 - 4 (\thetadag)^T \NewMat v_1 \leq 1$ because $\thetadag +
v_1 = \thetastar \in \ellip$ and $ (\thetadag)^T \NewMat v_1 = r
\norm{\NewMat\thetadag}_2^2 \geq 0$.  These properties guarantee the
inclusion $\thetadag \pm v_1 \in \ellip$, whence by convexity, the
ellipse $\ellip$ contains the line segment connecting the two points
$\thetadag \pm v_1$. We let $u_1$ be the normalized version of
$v_1$---namely, $u_1 \defn v_1 / \ltwo{v_1}$.


\paragraph{Constructing $u_2$.}

Without loss of generality, we can assume that $\lwk \geq 2$.  By
definition of $\lwk$, there exists a vector $\Delta \in
\ellip_{\thetastar} \cap \Ball(a\epsilon)$ satisfying $\norm{\Delta -
  \Pi_{v_1}(\Delta)}_2 \geq 3b \epsilon$.  Accordingly, we may define
\begin{align}
  \label{equation:delta_def}
\Delta_2 & \defn \arg \max_{\substack{ \Delta \in \ellip_{\thetastar}
    \cap \Ball(a\epsilon) : \\ \norm{\Delta - \Pi_{v_1}(\Delta)}_2
    \geq 3 b \epsilon }} \enorm{\Delta - \Pi_{v_1}(\Delta)}.
\end{align}
With this choice, we set $v_2 \defn \Delta_2 - \Pi_{v_1}(\Delta_2)$
and $u_2 \defn v_2/\norm{v_2}_2$.  Note that these choices ensure that
$\ltwo{v_2} \geq 3 b \epsilon$ and $v_2 \perp v_1$.

In order to complete the proof, it suffices to show that $\thetadag
\pm \frac{1}{3} v_2 \in \ellip$. Indeed, when this inclusion holds,
then we have $\thetadag \pm b\epsilon \begin{bmatrix} u_2 & \cdots
  u_{\lwk}
  \end{bmatrix} H_{\cdot, 1} = \thetadag \pm
b\epsilon u_2 \in \ellip.$ In order to show that the required
inclusion holds, we begin by noting that since $\ellip$ is a convex
set containing $\thetadag - v_1$ and $\thetastar + \Delta_2$, it
contains the segment connecting these two points.  In particular, we
have
\begin{align*}
\thetadag + \frac{\ltwo{(\thetadag - v_1) -
    \thetadag}}{\ltwo{(\thetadag - v_1) - (\thetastar +
    \Pi_{v_1}(\Delta_2))}} v_2 = \thetadag +
\frac{\ltwo{v_1}}{\ltwo{-2 v_1- \Pi_{v_1}(\Delta_2)}} v_2 \in
\Ellipse,
\end{align*}
a result that follows from the proportional segments
theorem~\cite{friedrich2008elementary}, when we consider a line
passing through $\thetadag$ that is parallel to $v_2$. See also
Figure~\ref{fig:geometry} for an illustration.
\begin{figure}[H]
\begin{center}
  \widgraph{0.6\textwidth}{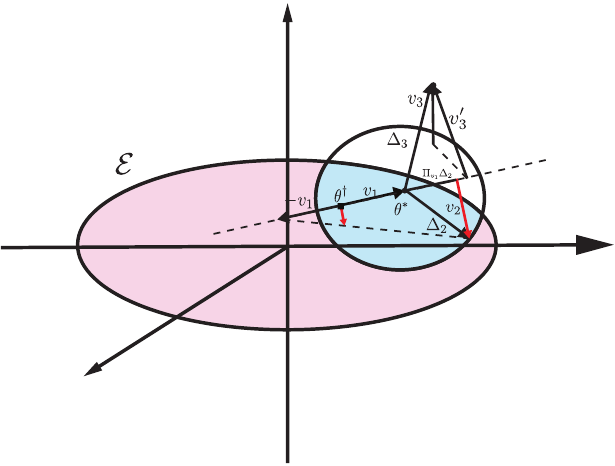}
  \caption{A geometric illustration of the proof.}
  \label{fig:geometry}
\end{center}
\end{figure}

Since $\ltwo{\Delta_2}
\leq a\epsilon \leq \ltwo{v_1}$ and $\ltwo{\Pi_{v_1}(\Delta_2)} \leq
\ltwo{\Delta_2}$, we thus have $\ltwo{-2v_1 - \Pi_{v_1}(\Delta_2)}
\leq 3 \ltwo{v_1}$ and it implies that $\thetadag + \frac{1}{3} v_2
\in \ellip$.  
Finally, using the fact that $\thetadag + \frac{1}{3} v_2 \in \ellip$
and $v_2^T \NewMat \thetadag = 0$, we have
\begin{align*}
  \enorm{\thetadag - \frac{1}{3} v_2}^2 = \enorm{\thetadag +
    \frac{1}{3} v_2}^2 - \frac{2}{3} v_2^T \NewMat \thetadag \leq 1,
\end{align*}
which completes the proof.


\paragraph{Constructing the remaining $u_i$.}

We sequentially construct pairs $(v_i, v'_i)$ for $i=3,\ldots,\lwk$ as
follows and $u_i$ is a just a scaled version of $v_i$.
Given $v_1,\ldots,v_{i-1}$ for $i \leq \lwk$, by definition of $\lwk$, 
there still exists
some $\Delta_i \in \ellip_{\thetastar} \cap \Ball(a\epsilon)$ such
that $\norm{\Delta_i - \Pi_{v_1,\ldots,v_{i-1}}(\Delta_i)}_2 \geq 3b
\epsilon$.  We then define
\begin{align*}
  v_i \defn \Delta_i - \Pi_{v_1,\ldots,v_{i-1}}(\Delta_i), \quad
  \mbox{and} \quad v'_i \defn \Delta_i - \Pi_{v_1}(\Delta_i),
\end{align*}
which ensures that $\ltwo{v_i} \geq 3b\epsilon$ and
both $v_i$ and $v_i'$ are perpendicular to $v_1.$
Finally, we set \mbox{$u_i \defn v_i/\norm{v_i}_2$}.

We now claim that
\begin{align}
\label{EqnStepIII}
\thetadag \pm \frac{1}{3} v_i' \in \ellip.
\end{align}
Taking this claim as given for the moment, let us first 
establish property (c) and (d). 
Observe that the vectors
vectors $v_1,\ldots,v_i$ are orthogonal, so we can write $v'_i$ as the
linear combination of $v_2,\ldots,v_i$.  For $i \geq 3$, there exist
scalars $r_{i,j}$ such that
\begin{align*}
v'_i = h_{i,2} v_2 + \cdots + h_{i,i-1} v_{i-1} + v_i,
\end{align*}
and by orthogonality of those $v_i$, we have 
\begin{align*}
\norm{v'_i - v_i}_2^2 = h_{i,1}^2 \ltwo{v_2}^2 + \cdots + h_{i,i-1}^2
\ltwo{v_{i-1}}^2.
\end{align*}
For $i \geq 3$, since $(v'_i - v_i) \perp v_i$, it is further guaranteed that 
\begin{align*}
\norm{v'_i - v_i}_2^2 & = \norm{v'_i}_2^2 - \norm{v_i}_2^2 \leq (a^2 -
9b^2) \epsilon^2
\end{align*}
where the last inequality follows from fact $v'_i \in
\Ball(a\epsilon)$ and the definition of fact that $\ltwo{v_i} \geq 3 b
\epsilon$.  Putting the pieces together, we have
\begin{align*}
h_{i,1}^2 + \cdots + h_{i,i-1}^2 \leq
\frac{(a^2-9b^2)\epsilon^2}{9b^2\epsilon^2} = \frac{a^2-9b^2}{9b^2}.
\end{align*}
This proves property property (c) where $\norm{\Mat_{i,\cdot}}_2^2 \leq 1 +
(a^2-9b^2)/9b^2$.  Thus by relation~\eqref{EqnStepIII}, for $i \geq 3$
we have shown
\begin{align*}
\thetadag \pm b\epsilon x_{i-1} = \thetadag \pm b\epsilon(h_{i,2} u_2
+ \cdots + h_{i,i-1} u_{i-1} + u_i) \in \ellip,
\end{align*}
which completes the proof of property (d).

It is only left to prove relation~\eqref{EqnStepIII}.  To see this
fact, first note $\norm{v_i}_2 \leq \norm{v'_i}_2$ due to Pythagoras
theorem.  Also, by the maximality of $v_2$ in inequality
\eqref{equation:delta_def}, note that $\enorm{v'_i} \leq
\enorm{v_2}$. Thus,
\begin{align*}
\enorm{\thetadag + \frac{1}{3} v'_i}^2 = \enorm{\thetadag}^2 +
\frac{1}{9} \enorm{v'_i}^2 \leq \enorm{\thetadag}^2 + \frac{1}{9}
\enorm{v_{2}}^2 = \enorm{\thetadag + \frac{1}{3} v_2}^2 \leq 1,
\end{align*}
where we have used the fact that $v'_i$ and $v_2$ are perpendicular to
$v_1 = r\NewMat\thetadag$ to ignore the cross terms, as well as the
fact that $\thetadag + \frac{1}{3} v_{2} \in \ellip$.  Thus $\thetadag
+ \frac{1}{3} v'_i \in \ellip$. We can also show $\thetadag -
\frac{1}{3} v'_i \in \ellip$ similarly as before
since $\thetadag \NewMat v_i' = 0.$


\subsubsection{Part II}

We deal with inequalities (i) and (ii) separately.  
For any $s \in [\lwk - 1]$, we have 
\begin{align*}
s \nu_{s}^2 \leq 
\sum_{i=1}^{\lwk - 1} \nu_i^2 = \trace(\Mat^T \Mat) \leq \sum_{i=1}^{\lwk - 1}
\ltwo{\Mat_{i,\cdot}}^2 \leq (\lwk-1)\frac{a^2}{9b^2},
\end{align*}
where the last inequality uses property (c).
It implies that $s \nu_{s}^2 \leq (\lwk-1)\frac{a^2}{9b^2}$ which proves inequality (i).

For any $t \in [\lwk -2]$, 
we claim that for any $t$-dimensional subspace $W \subseteq
\real^m$, there exists some $\ell$, such that standard basis vector $e_\ell$ satisfies that 
$\norm{e_\ell - \Pi_W(e_\ell)}_2^2 \geq 1 - \frac{t}{m}$.  In order
to prove this claim, we take an orthonormal basis $z_1,\ldots,z_ t$ of
$W$ and extend it to an orthonormal basis $z_1,\ldots,z_m$ for
$\real^m$. We then have
\begin{align*}
\sum_{i=1}^m \ltwo{e_i - \Pi_W(e_i)}^2 = \sum_{i=1}^m \sum_{j=t+1}^m
\inner{e_i, z_j}^2 = \sum_{j=t+1}^m \sum_{i=1}^m \inner{e_i, z_j}^2
= m-t,
\end{align*}
where the last equality is due to the fact that $e_i, 1\leq i \leq m$
forms a standard basis of $\real^m$, and $z_j$ is a unit vector.  So the claim
holds by the pigeonhole principle.

Now let $\Mat^T = \sum_{i=1}^{\lwk-1} \sigma_i u_i v_i^T$ with
$\sigma_1 \geq \cdots \geq \sigma_{\lwk-1}$ be the SVD of $\Mat^T$.
Take $W$ be the span of the first $t$ singular vectors
$v_1,\ldots,v_t$.  By taking dimension $m=\lwk-1$, the above claim implies there
exists some $e_\ell$ such that $\norm{e_\ell - \Pi_W(e_\ell)}_2^2 \geq
1 - \frac{t}{\lwk-1}$.  Let $v \defn
(h_{\ell,2},\ldots,h_{\ell,\ell-1}, 0,\ldots,0)$ so that the $\ell-1$
row of $\Mat^T$ can be written as $\Mat^T_{\cdot,\ell-1} = e_\ell^T +
v$.  Note by definition of matrix $\Mat$, we have $\norm{v}_2^2 \leq
a^2-9b^2$; thus
\begin{align*}
\norm{v - \Pi_W(v)}_2 \leq \norm{v}_2 \leq \sqrt{a^2-9b^2}/3b.
\end{align*}
Applying the triangle inequality yields
\begin{align}
\label{EqnDist}
\norm{\Mat_{\cdot,\ell-1} - \Pi_W(\Mat_{\cdot,\ell-1})}_2 \geq
\norm{e_\ell-\Pi_W(e_\ell)}_2 - \norm{v - \Pi_W(v)}_2 \geq 1 -
\frac{t}{\lwk - 1} - \frac{\sqrt{a^2 - 9b^2}}{3b}.
\end{align}

Define the matrix $\tilde{\Mat} \defn \sum_{i=1}^{t} \sigma_i u_i
v_i^T$, note that the rows of $\tilde{\Mat}$ are the projections of
the rows of $\Mat^T$ onto $W$, and therefore, the definition
of the operator norm implies that
\begin{align*}
    \norm{\Mat^T - \tilde{\Mat}}_{op} \geq \norm{\Mat_{\cdot,\ell-1} -
      \Pi_W(\Mat_{\cdot,\ell-1})}_2.
\end{align*}
It is easy to check that $\norm{\Mat^T - \tilde{\Mat}}_{op} =
\sigma_{t+1}$.  Combining with inequality~\eqref{EqnDist}, we have
\begin{align*}
\sigma_{t+1} \geq 1 - \frac{t}{\lwk - 1} - \frac{\sqrt{a^2 - 9b^2}}{3b},
\end{align*}
which completes the proof of inequality (ii).

\vspace*{0.5cm}
\noindent Combining our results from Parts I and II concludes the
proof of Lemma~\ref{LemPacking}.


\subsection{Proof of Lemma~\ref{LemHanaSleep}}
\label{AppLemHanaSleep}

We break our proof into two parts, corresponding to the two claims in
the lemma.

\paragraph{Proof of inequality~(i):}

This inequality is proved via a probabilistic argument.  Recall that
the ellipse norm is defined as $\enorm{x}^2 = \sum_{i=1}^{\usedim}
\frac{x_i^2}{\mu_i}$, and that we write $x \in \ellip$ to mean to
$\enorm{x} \leq 1$. We use $\NewMat$ to denote the diagonal matrix
with entries $(1/\mu_1, \ldots, 1/\mu_\usedim)$.

Let us define auxiliary vector 
\begin{align}
\label{EqnPerturbTilde}
 \tilde{\theta}^S \defn \thetadag +  \frac{b \epsilon}{2\sqrt{2s}} X V z^S,
\end{align}
where matrix $X \defn U\Mat$ is defined in Lemma~\ref{LemPacking}
and matrix $\Mat$ has eigen-decomposition $Q\Sigma V^T$.
We claim that the following two facts:
\begin{itemize}
\item[(a)] There exists a sign vector $z^S \in \{-1,0,1\}^{\lwk-1}$
  such that $\tilde{\theta}^S \in \ellip$.
\item[(b)] Defining $v_1 \defn \thetastar - \thetadag$, we then have
  $\thetastar + v_1\in \ellip$.
\end{itemize}
Taking these two facts as given for now, we can prove inequality~(i).
Noticing that $\thetadag + v_1 = \thetastar \in \ellip$ and $\thetadag
+ 2v_1 = \thetastar + v_1 \in \ellip$, by convexity of set $\ellip$,
if $\thetadag + u \in \ellip$ for some vector $u$ then
\begin{align*}
\at + \frac{1}{2} u = 
  \thetadag + v_1 + \frac{1}{2} u = \frac{1}{2}(\thetadag + u) + 
  \frac{1}{2}(\thetadag + 2v_1) \in \ellip.
\end{align*}
Letting $u \defn b \epsilon \frac{1}{\sqrt{s}} X V z^S$, then
$\theta^S \defn \thetastar + \frac{1}{2}b \epsilon \frac{1}{\sqrt{s}}
X V z^S = \at + \frac{1}{2} u$.  Then we have $\theta^S \in \ellip$.

\noindent It remains to establish claims (a) and (b) stated above.

\paragraph*{Proof of claim (a):}
Let $z^S \in \{-1, +1\}^{|S|}$ be a vector of i.i.d. Rademacher random
variables.  Expanding the square yields
\begin{align*}
\enorm{\tilde{\theta}^S}^2 & = (\thetadag)^T \NewMat \thetadag + 
\frac{b \epsilon}{\sqrt{2 s}}(\thetadag)^T \NewMat X V z^S +  \frac{b^2 \epsilon^2}{8 s}(X V z^S)^T \NewMat X V z^S\\
& = (\thetadag)^T \NewMat \thetadag + b^2 \epsilon^2 \frac{1}{8 s} (X V
z^S)^T \NewMat X V z^S,
\end{align*}
where the last inequality is due to the fact that $(\thetadag)^T \NewMat X =
0.$ Let us consider the expectation $\Exs \enorm{\tilde{\theta}^S}^2$.  Note
that
\begin{align*}
\Exs (X V z^S)^T \NewMat X V z^S &= \Exs ~\trace[(X V z^S)^T \NewMat X
  V z^S] \; = \; \Exs ~\trace[V^TX^T \NewMat X V z^S(z^S)^T],
\end{align*}
where the last inequality uses the property of the trace function.  By
linearity, we can switch the order of the expectation and trace
function, and doing so yields
\begin{align*}
  \Exs ~\trace[V^TX^T \NewMat X V z^S(z^S)^T] = \trace[
  \underbrace{V^TX^T \NewMat X V}_{\defn \AMat}
    \Exs [z^S(z^S)^T]] 
    = \sum_{i\in S} \AMat_{i}.
\end{align*} 
We claim $\sum_{i\in S} \AMat_i \leq 8s\max_{i} \{x_i^T\NewMat x_i\}$.
In order to show this, first note that 
\begin{align*}
  \trace[A] = \trace[V^TX^T \NewMat X V] = \trace[X^T \NewMat X]
  = \sum_{i=1}^{\lwk-1} x_i^T \NewMat x_i,
\end{align*}
using the fact that $V V^T= \Ind$.
Since $S$ is the subset of $F \defn \{m_1,m_1+1, \ldots,m_2\} \cap \Mdex(7(\lwk-1)/8)$, so for $t \defn \frac{7(\lwk-1)}{8}$, we have 
\begin{align*}
  ((\lwk-1) - t) A_{(t)} \leq \sum_{i=t}^{\lwk-1}A_{(i)}
  \leq \trace[A] \leq (\lwk-1) \max_{i} \{x_i^T\NewMat x_i\}.
\end{align*}
It implies that $A_{(t)} \leq 8 \max_{i} \{x_i^T\NewMat x_i\}$ which further verifies our claim. 

Putting the pieces together, we have
\begin{align*}
\Exs \enorm{\tilde{\theta}^S}^2 &= (\thetadag)^T \NewMat \thetadag +
 b^2 \epsilon^2 \frac{1}{8 s} \Exs(X Vz^S)^T \NewMat X V z^S
\\
& \stackrel{(i)}{\leq} (\thetadag)^T \NewMat \thetadag + b^2 \epsilon^2 \frac{1}{8 s} 8s\max_{i} \{x_i^T\NewMat x_i\}
\stackrel{(ii)}{=} \enorm{\thetadag + b\epsilon x_j}^2
\stackrel{(iii)}{\leq} 1,
\end{align*}
where step (i) uses claim on the $A_i$s, step (ii) uses fact that 
$\thetadag \NewMat x_j = 0$ and 
step (iii) uses the fact that $\thetadag + b\epsilon x_j \in \ellip$.  Since the average squared ellipse norm is at most one,
there must exist at least one $\tilde{\theta}^S$ with
$\enorm{\tilde{\theta}^S}^2 \leq 1$, as claimed.

\paragraph*{Proof of claim (b):}
Since $v_1 = \thetastar - \thetadag$, the results of
Lemma~\ref{LemPacking} guarantee that
\begin{align*}
\enorm{v_1}^2 = \sum_{i=1}^\usedim
\left(\frac{r(\epsilon)}{r(\epsilon)+\mu_i}\right)^2(\at_i)^2 & =
\sum_{i=1}^\usedim \frac{r(\epsilon)^2 \mu_i}{(r(\epsilon)+\mu_i)^2}
\frac{(\at_i)^2}{\mu_i} \\
& \leq \enorm{\at}^2 \; \max_{i = 1, \ldots, \usedim}
\left\{\frac{r(\epsilon)^2 \mu_i}{(r(\epsilon)+\mu_i)^2}\right\}.
\end{align*}
Since $(r(\epsilon)+\mu_i)^2 \geq 4r(\epsilon)\mu_i$, we have
$\enorm{v_1} \leq \frac{\sqrt{r(\epsilon)}}{2}\enorm{\at}$.  Triangle
inequality guarantees that
\begin{align*}
  \enorm{\at + v_1} \leq \enorm{\at} + \enorm{v_1} \leq 
  \left(1+\frac{\sqrt{r(\epsilon)}}{2}\right)\enorm{\at} \leq 1,
\end{align*}
where the last inequality uses assumption that 
$r(\epsilon) = \Rfun(\epsilon) < (\enorm{\at}^{-1} - 1)^2$
and $\enorm{\at} \leq 1.$


\paragraph{Proof of inequality~(ii):}
Letting $\DeltaTil^S \defn \tilde{\theta}^S - \thetadag$ and 
by construction $\Delta^S = \frac{1}{2} \DeltaTil^S.$
By definition, $\ltwo{\DeltaTil}^2 = \frac{1}{8s} b^2 \epsilon^2
\norm{U\Sigma Vz^S}_2^2.$ Substituting matrix $\Mat$ as its
decomposition $Q\Sigma V^T$ yields
\begin{align} \label{EqnHalloween}
  \norm{\tilde{\theta}^S - \thetadag}_2^2 & = \frac{1}{8s} b^2 \epsilon^2 
  \norm{UQ\Sigma z^S }_2^2 = 
 \frac{1}{8s} b^2 \epsilon^2 \norm{\Sigma z^S}_2^2,
\end{align}
where the last inequality uses the orthogonality of matrices $U, Q$.

Recall that set $S$ is a subset of $\{m_1,\ldots,m_2\}$.  Using
eigenvalue bound~\eqref{EqnEignBound}(ii) from part (e) of
Lemma~\ref{LemPacking}, we have $\norm{\Sigma z^S}_2^2 \geq (1 -
\frac{m_2}{\tmpdim} - \frac{\sqrt{a^2 - 9b^2}}{3b})^2 \ltwo{z^S}^2$.  Combining with equality~\eqref{EqnHalloween} yields
inequality~(ii).\\

\noindent Finally, putting the two parts together completes the proof
of Lemma~\ref{LemHanaSleep}.


\subsection{Proof of Lemma~\ref{LemPackingLB}}
\label{AppLemPackingLB}

Let us denote the cardinality of $\mathcal{S}$ as $N \defn
\binom{m_2-m_1+1}{s}.$ Then the expectation can be written as
\begin{align*}
\Exs \exp\big(\frac{\inprod{\eta}{\eta'}}{\sigma^2}\big) = \frac{1}{N^2}
\sum_{S, S' \in \KposSet} \exp\big(\frac{1}{\sigma^2}
\inprod{\Delta^S}{\Delta^{S'}}\big).
\end{align*}
From the definition~\eqref{EqnPerturb} of $\theta^S$, we have
\begin{align*}
\inprod{\Delta^S}{\Delta^{S'}} = \frac{1}{32}b^2 \epsilon^2
\frac{1}{s}(z^S)^T V^T \Mat^T U^T U \Mat V z^{S'} = \frac{1}{32}b^2
\epsilon^2 \frac{1}{s} (z^S)^T \Sigma^2 z^{S'},
\end{align*}
where we used the relation $\Mat^T \Mat= V \Sigma^2 V^T$, and the
orthogonality of the matrices $V$ and $U$.  In order to further
control the right hand side, note that
\begin{align*}
(z^S)^T \Sigma^2 z^{S'} = \sum_{i\in S\cap S'} \nu_i^2 z^S_iz^{S'}_i
  \leq \nu^2_{m_1} \sum_{i\in S\cap S'} |z^S_iz^{S'}_i| = \nu^2_{m_1}
  |S \cap S'|.
\end{align*}
Combining these three inequalities above yields
\begin{align*}
\Exs \exp \big(\frac{\inprod{\eta}{\eta'}}{\sigma^2}\big) 
\leq \frac{1}{N^2}
\sum_{S,S' \in \KposSet} \exp \big(\frac{1}{32\sigma^2} b^2 \epsilon^2
\nu^2_{m_1} \frac{|S \cap S'|}{s}\big).
\end{align*}

Let us now further compute the right hand side above.  Recall that we
denote $\tmpdim \defn m_2-m_1+1$.  Note that intersection cardinality
$|S \cap S'|$ takes values in $\{0, 1, \ldots, s \}$.  Given every set
$S$ and integer $i \in \{0, 1, \ldots, s \}$, the number of $S'$ such
that $|S \cap S'| = i$ equals to ${s \choose i}{\tmpdim-s \choose
  s-i}$.  
Consequently, if we let $\lambda \defn \frac{1}{32\sigma^2} b^2 \epsilon^2
\nu^2_{m_1}$, we obtain
\begin{align}
\label{EqnChiSquareUpper}
\Exs \exp\big(\frac{\inprod{\eta}{\eta'}}{\sigma^2}\big) 
= {\tmpdim \choose
  s}^{-1} \sum_{i=0}^s {s \choose i} {\tmpdim-s \choose s-i}
e^{\lambda i/s} = \sum_{i=0}^s \frac{\SPEC_i z^i}{i !},
\end{align}
where $z \defn e^{\lambda/s}$ and $\SPEC_i \defn
\frac{(s!(\tmpdim-s)!)^2}{((s-i)!)^2 \tmpdim!(\tmpdim-2s+i)!}$.

Let us set integer $s \defn \lfloor \sqrt{\tmpdim} \rfloor$.  We claim
quantity $\SPEC_i$ satisfies the following bound
\begin{align} \label{EqnNonNe-Ai}
\SPEC_i \leq \exp\Big(- (1-\frac{1}{\sqrt{\tmpdim}})^2 +
\frac{2i}{\sqrt{\tmpdim}-1}\Big) \qquad \mbox{for all $i \in \{0, 1,
  \ldots, s\}$.}
\end{align} 
Taking expression~\eqref{EqnNonNe-Ai} as given for now and plugging
into inequality \eqref{EqnChiSquareUpper}, we have
\begin{align*}
\Exs \exp(\frac{\inprod{\eta}{\eta'}}{\sigma^2}) \leq \exp \Big(-
(1-\frac{1}{\sqrt{\tmpdim}})^2 \Big)\sum_{i=0}^{s}
\frac{(z\exp(\frac{2}{\sqrt{\tmpdim}-1}))^i}{i !}
& \, \stackrel{(a)}{\leq} \, \exp \Big(-
(1-\frac{1}{\sqrt{\tmpdim}})^2 \Big) \exp \left( z
\exp(\frac{2}{\sqrt{\tmpdim}-1})\right)\\
& \, \stackrel{(b)}{\leq} \, \exp \left(-
\left(1-\frac{1}{\sqrt{\tmpdim}}\right)^2 + \exp
\left(\frac{2+\lambda}{\sqrt{\tmpdim}-1}\right) \right),
\end{align*}
where step (a) follows from the standard power series expansion $e^x =
\sum_{i=0}^\infty \frac{x^i}{i!}$ and step (b) follows by $z =
e^{\lambda/s}$ and $s = \lfloor \sqrt{\tmpdim} \rfloor >
\sqrt{\tmpdim} - 1$.  We have thus established
inequality~\eqref{EqnPackingLB}.

It only remains to check inequality~\eqref{EqnNonNe-Ai} for $\SPEC_i$.
Using the fact that $1 - x \leq e^{-x}$, we have
\begin{align}
\label{EqnHfun1}  
\SPEC_0 = \frac{((\tmpdim - s)!)^2}{\tmpdim! (\tmpdim - 2s)!}  &= (1 -
\frac{s}{\tmpdim}) (1 - \frac{s}{\tmpdim-1})\cdots(1 -
\frac{s}{\tmpdim-s+1}) \leq \exp( - s\sum_{i=1}^s
\frac{1}{\tmpdim-s+i}).
\end{align}
Recalling that $s \defn \lfloor \sqrt{\tmpdim} \rfloor$, then we can
bound the sum in expression~\eqref{EqnHfun1} as
\begin{align*}
s\sum_{i=1}^s \frac{1}{\tmpdim-s+i} \geq s \sum_{i=1}^{s}
\frac{1}{\tmpdim} = \frac{s^2}{\tmpdim} \geq
(1-\frac{1}{\sqrt{\tmpdim}})^2,
\end{align*}
which, when combined with inequality~\eqref{EqnHfun1}, implies that
$\SPEC_0 \leq \exp( - (1-\frac{1}{\sqrt{\tmpdim}})^2 )$.

Moreover, direct calculations yield that we have
\begin{align} 
\label{EqnHfun2} 
\frac{\SPEC_i}{\SPEC_{i-1}} = \frac{(s-i+1)^2}{\tmpdim-2s+i} \; \leq
\; \frac{\SPEC_1}{\SPEC_0} \quad \mbox{for all $i = 1, \ldots, s$,}
\end{align}
  where the last inequality follows from the fact that
  $\frac{(s-i+1)^2}{\tmpdim-2s+i}$ is non-increasing with index $i$.
  Therefore, recalling that $s = \lfloor \sqrt{\tmpdim} \rfloor$, we
  have
\begin{align*}
\frac{\SPEC_i}{\SPEC_{i-1}} \leq \frac{G_1}{G_0} \; \leq \;
\frac{\tmpdim}{\tmpdim-2\sqrt{\tmpdim}+1} = (1 +
\frac{1}{\sqrt{\tmpdim}-1})^2 \leq \exp(\frac{2}{\sqrt{\tmpdim}-1}),
\end{align*}
where the last inequality follows from $1+x \leq e^x.$ Putting pieces
together validates bound \eqref{EqnNonNe-Ai} thus finishing the proof
of inequality~\eqref{EqnPackingLB}.


\section{Proof of Lemma~\ref{LemCalculation}}
\label{AppCalc}

Letting $\Delta \defn \sqrt{ \frac{\delta^2 - \mu_{\mind+1}}{1-t} +
  \frac{t(\at_s)^2}{(1-t)^2}} - \frac{t\at_s}{1-t}$, we then have
$\theta_{\mind+1}^2 = \delta^2 - \Delta^2$.  Direct calculations yield
\begin{align*}
  \Delta = \frac{\frac{\delta^2 - \mu_{\mind+1}}{1-t} +
    \frac{t(\at_s)^2}{(1-t)^2} - \frac{t^2(\at_s)^2}{(1-t)^2} }
         { \sqrt{ \frac{\delta^2 -
               \mu_{\mind+1}}{1-t} + \frac{t(\at_s)^2}{(1-t)^2}} + \frac{t\at_s}{1-t}}
  = \frac{\frac{\delta^2 - \mu_{\mind+1}}{1-t} + \frac{t(\at_s)^2}{1-t}}
    {\sqrt{ \frac{\delta^2
     - \mu_{\mind+1}}{1-t} + \frac{t(\at_s)^2}{(1-t)^2}}
     + \frac{t\at_s}{1-t}}.
\end{align*}
Re-organizing the terms yields 
\begin{align*}  
  \Delta &= \frac{\delta^2 - \mu_{\mind+1} + t(\at_s)^2}{ t\at_s +
    \sqrt{(\delta^2 - \mu_{\mind+1})(1-t) + t(\at_s)^2} }
  = \delta\Big(\frac{1 + \frac{t(\at_s)^2-
      \mu_{\mind+1}}{\delta^2}}{\frac{t\at_s}{\delta} + \sqrt{1 - t + 
      \frac{t\mu_{\mind+1}}{\delta^2} + \frac{t(\at_s)^2-
        \mu_{\mind+1}}{\delta^2} }} \Big).
\end{align*}
Now let us first analyze the denominator.  Since $t$ is defined as
$\frac{\mu_{\mind+1}}{\mu_s}$, we obtain
$\frac{t\mu_{\mind+1}}{\delta^2} =
\frac{\mu^2_{\mind+1}}{\delta^2\mu_s}$; by $\at_s \leq \sqrt{\mu_s}$,
we obtain that $\frac{t\at_s }{\delta} \leq
\frac{t\sqrt{\mu_s}}{\delta} \leq \frac{\mu_{\mind+1}}{\delta
  \sqrt{\mu_s}}$.  Plugging into the expression of $\Delta$ gives
\begin{align*}
  \frac{\Delta}{\delta} \geq \frac{1 + \frac{t(\at_s)^2-
      \mu_{\mind+1}}{\delta^2}}{ \frac{\mu_{\mind+1}}{\delta
      \sqrt{\mu_s}} + \sqrt{1+ \frac{\mu^2_{\mind+1}}{\delta^2\mu_s} +
      \frac{t(\at_s)^2- \mu_{\mind+1}}{\delta^2}-t }}.
\end{align*}
Moreover, since $\mind$ is chosen as the maximum index that satisfies
$\mu_k^2 \geq \frac{1}{64}\delta^2\mu_s$, it is guaranteed that
$\mu_{\mind+1}^2 \leq \frac{1}{64}\delta^2\mu_s$ which further implies
that
\begin{align}
\label{EqnBrunch}
  \frac{\Delta}{\delta} \geq \frac{1 + \frac{t(\at_s)^2-
      \mu_{\mind+1}}{\delta^2}}{ \frac{1}{8} + \sqrt{1+ \frac{1}{64} +
      \frac{t(\at_s)^2- \mu_{\mind+1}}{\delta^2}-t }}.
\end{align}

Now, in order to control $\frac{\Delta}{\delta}$, we only need to control
quantity $\frac{t(\at_s)^2- \mu_{\mind+1}}{\delta^2}$.  Recall that
for any \mbox{$\delta > \newepscritu(s, \ellip)$}, we only consider those
$\at$ satisfying $\at_s \geq \sqrt{\mu_s} - \newepscritu(s, \ellip)$.
Let us write $\at_s = \sqrt{\mu_s} - c\delta$ with $c\in [0,1]$, then
\begin{align*}
  \frac{t(\at_s)^2- \mu_{\mind+1}}{\delta^2} = t\frac{(\sqrt{\mu_s} -
    c\delta)^2- \mu_s}{\delta^2} = 
    \underbrace{-2c
  \frac{\mu_{\mind+1}}{\delta\sqrt{\mu_s}}}_{\defn x} + c^2 t.
\end{align*}
Here $x \defn -2c \frac{\mu_{\mind+1}}{\delta\sqrt{\mu_s}} \geq
-2\frac{\mu_{\mind+1}}{\delta\sqrt{\mu_s}} \geq - \frac{1}{4} $ since
$c \leq 1$ and $\mu_{\mind+1}^2 \leq \frac{1}{64}\delta^2\mu_s$.  Now
we are ready to substitute this expression into
inequality~\eqref{EqnBrunch}.  Therefore it is guaranteed that
\begin{align*}
  \frac{\Delta}{\delta} \geq \frac{1 + x + c^2t}{\frac{1}{8} +
    \sqrt{\frac{65}{64} + x + c^2t -t}} \geq \frac{1 + x}{\frac{1}{8}
    + \sqrt{\frac{65}{64} + x}} \geq \min_{x\geq -1/4} \Big\{\frac{1 +
    x}{\frac{1}{8} + \sqrt{\frac{65}{64} + x}} \Big\} = \frac{3}{4}.
\end{align*}
Consequently, we have
\begin{align*}
\theta^2_{\mind+1} = \delta^2 - \Delta^2 \leq \delta^2 -
(\frac{3\delta}{4})^2 \leq \frac{1}{2}\delta^2.
\end{align*}


\section{Auxiliary results on the critical radii}
\label{AppAux}
In this appendix, we collect a number of auxiliary results concerning
the upper and low critical inequalities, as defined in
equations~\eqref{EqnRadCritU} and~\eqref{EqnRadCritL}, respectively.


\subsection{Existence and uniqueness}
\label{AppSolution}

First, let us show that there exists a unique and strictly positive
solution to inequalities~\eqref{EqnRadCritU} and~\eqref{EqnRadCritL}.

Let us prove this claim for the solution of
inequality~\eqref{EqnRadCritU}, since the argument for
inequality~\eqref{EqnRadCritL} is entirely analogous.  It suffices to
show that the function $g(t) \defn \upk(t, \thetastar, \ellip)$ is
non-increasing on the positive real line.  Since the function $t
\mapsto 1/t$ is strictly decreasing on the positive real line, we are
then guaranteed that the function $t \mapsto \sqrt{g(t)}/t =
\sqrt{\upk(t, \thetastar, \ellip)}/t$ is strictly decreasing on the
positive real line.  This property ensures that
inequality~\eqref{EqnRadCritU} has a unique and strictly positive
solution: the left-side function $t$ is strictly increasing, whereas
the right-side function is strictly decreasing.

It remains to show that $g(t) = \upk(t, \thetastar, \ellip)$ is
non-increasing.  Equivalently, we need to show that for any $t > 0$,
we have
\begin{align*}
  g(c t) \; = \; \upk(c t, \thetastar, \ellip) \geq \upk(t,
  \thetastar, \ellip) \; = \; g(t) \qquad \mbox{for all $c\in (0,
    1]$.}
\end{align*}
By linearity of the Kolmogorov width, we have
\begin{align}
\label{EqnHanaPrincess}
\upk(t, \thetastar, \ellip) \defn \arg \min_{1 \leq k \leq \usedim}
\braces*{\kwidth_k(\ellip_{\thetastar} \cap \Ball(t)) \leq \frac{1}{2}
  t} = \arg \min_{1 \leq k \leq \usedim}
\braces*{\kwidth_k(c(\ellip_{\thetastar} \cap \Ball(t))) \leq
  \frac{1}{2} c \, t}.
\end{align}
Given any vector $v\in \ellip_{\thetastar} \cap \Ball(t)$, convexity
ensures that $c v \in \ellip_{\thetastar} \cap \Ball(c t)$, which is
equivalent to the containment $c \, \big(\ellip_{\thetastar} \cap
\Ball(t) \big) \subset \ellip_{\thetastar} \cap \Ball(c t)$.  This
containment implies that
\begin{align*}
  \kwidth_k(c(\ellip_{\thetastar} \cap \Ball(t))) \leq
  \kwidth_k(\ellip_{\thetastar} \cap \Ball(c t)).
\end{align*}
Combined with our earlier inequality~\eqref{EqnHanaPrincess}, we
conclude that $\upk(t, \thetastar, \ellip) \leq \upk(c t, \thetastar,
\ellip)$, as desired.


\subsection{Well-definedness of the function $\Rfun$}
\label{AppRfun}

In this appendix, we verify that the function $\Rfun$ from
equation~\eqref{EqnRfun} is well-defined.  In order to provide
intuition, \autoref{fig:Rfun} provides an illustration of $\Rfun$.
\begin{figure}[h]
  \centering \includegraphics[width=0.45\textwidth]{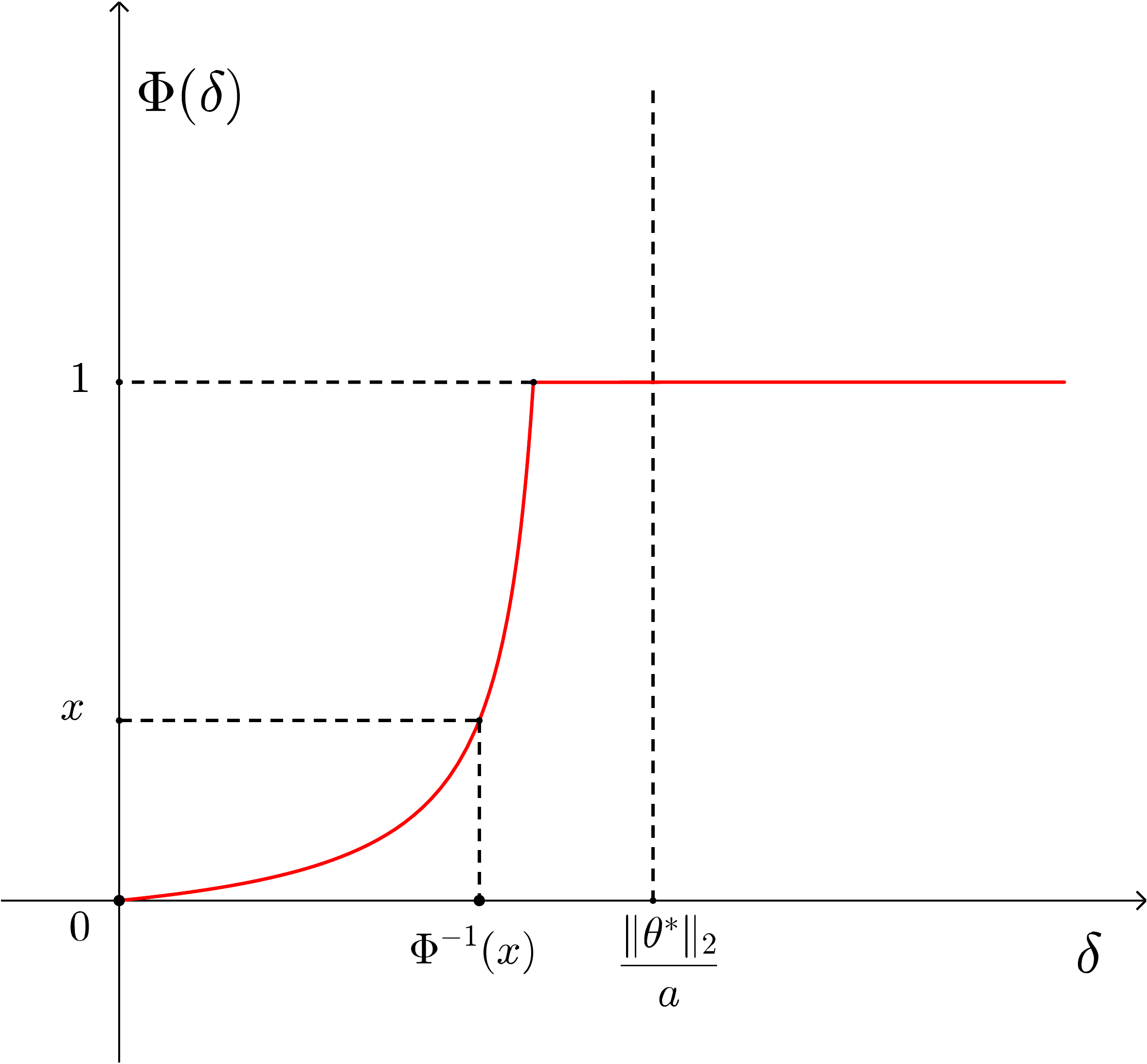}
  \caption{Illustration of the function $\Rfun$.}
  \label{fig:Rfun}
\end{figure}

We begin with the case when $\delta < \ltwo{\at}/a$.  For notation
simplicity, let $r(\delta) \defn \min \Big \{ r \geq 0 \, \mid a^2
\delta^2 \leq \sum_{i=1}^\usedim \frac{r^2}{(r+\mu_i)^2} (\at_i)^2
\Big \}.$ Since each $\mu_i \geq 0$, function \mbox{$f: r \to
  \sum_{i=1}^\usedim \frac{r^2}{(r+\mu_i)^2}(\at_i)^2$} is
non-decreasing in $r$.  It is easy to check that
  \begin{align*}
    \lim_{r\to 0^+} f(r) = 0,\qquad \text{ and }\lim_{r\to \infty} f(r) = \ltwo{\at}^2.
  \end{align*}
  Then quantity $r(\delta)$ is uniquely defined and positive whenever
  $\delta < \ltwo{\at}/a$.  Note that as $\delta \to
  \frac{\ltwo{\at}}{a}$, $a^2 \delta^2 \to \ltwo{\at}^2$ therefore
  $r(\delta) \to \infty.$
  
  It is worth noticing that given any $\at$ where $\ltwo{\at}$ does
  not depend on $\delta$, $r$ goes to zero when $\delta \to 0$, namely
  $\lim_{\delta \to 0^+} \Rfun(\delta) = 0.$



\end{document}